\numberwithin{equation}{section}
\newtheorem{theorem}{Theorem}[section]
\newtheorem{lemma}{Lemma}[section]
\newtheorem{remark}{Remark}[section]
\newtheorem{proposition}{Proposition}[section]
\newcommand{\dv}{\text{div}}
\title{Global strong solution of the 3D inhomogeneous liquid crystal flows with density-dependent viscosity and large velocity}
\author{Jiaxu L{\small I}$^{b}$, Yu M{\small EI}$^c$, Rong Z{\small HANG}$^{a,d}$ \thanks{Email addresses:  Jiaxvlee@gmail.com (J. X. Li), yu.mei@nwpu.edu.cn (Y. Mei), rzhang0921@gmail.com (R. Zhang). }  \\ 
{\normalsize a. School of Mathematics and Computer Sciences,}\\
{\normalsize Nanchang University, Nanchang 330031, P. R. China;}\\
{\normalsize b.  The Institute of Mathematical Sciences,}\\
{\normalsize  The Chinese University of Hong Kong,
Shatin, N.T.;}\\
{\normalsize c. School of Mathematics and Statistics,}\\
{\normalsize Northwestern Polytechnical University, Xi'an Shaanxi, 710129, P. R. China}\\
{\normalsize d. Institute of Mathematics and Interdisciplinary Sciences,}\\
{\normalsize Nanchang University, Nanchang 330031, P. R. China.}
}
\date{}
\begin{document}
\maketitle
\begin{abstract}
This paper concerns the initial boundary value problem of three-dimensional inhomogeneous incompressible liquid crystal flows with density-dependent viscosity. When the viscosity coefficient $\mu(\rho)$ is a power function of the density with the power larger than $1$, that is $\mu(\rho)=\mu\rho^\alpha$ with $\alpha>1$, it is proved that the system exists a unique global strong solution as long as the initial density is sufficiently large and $L^3$-norm of the derivative of the initial director is sufficiently small.  This is the first result concerning the global strong solution for three-dimensional inhomogeneous liquid crystal flows without smallness of velocity.

\textbf{Keywords:} inhomogeneous liquid crystal flow, density-dependent viscosity, global strong solutions, large velocity
\end{abstract}

\section{Introduction}

Nematic liquid crystals are intermediate states between solid crystals and isotropic liquids, which consist of rod-like molecules with a preferred local average direction. The static configurations of nematic liquid crystals are usually described by a unit vector $d\in\mathbb{S}^2$ being subject to the variation of the Oseen-Frank energy proposed in \cite{oseen1933theory,frank1958liquid}. Based on this static vector model and balance laws from fluid mechanics, Ericksen\cite{ericksen1961conservation} and Leslie\cite{leslie1968some} introduced the widely accepted hydrodynamic model for nematic liquid crystal flows. Precisely, the inhomogeneous incompressible general Ericksen-Leslie system, as a model for the motion of a multi-phase nematic liquid crystal flows with different densities, reads as follows
\begin{equation}\label{GILC}
			\left\{\begin{array}{lr}	
   \rho_t+\mathrm{div}(\rho u)=0,\\
   (\rho u)_t+\mathrm{div}(\rho u\otimes u)+\nabla P=\mathrm{div}(\sigma^E+\sigma^L)\\
d\times\left(\lambda_1 N+\lambda_2Ad-h\right)=0,\\
\mathrm{div} u=0,\quad |d|=1,
\end{array}\right.
\end{equation}
where $\rho$, $u$, $P$ and $d$ represent, respectively, the density, the velocity, the pressure, and the director of liquid crystal flow. $\sigma^E$ denotes the Ericksen stress tensor  given by
\begin{equation}\label{E-T}
\sigma^E=-\nabla d^\top\frac{\partial W(d,\nabla d)}{\partial  (\nabla d)}
\end{equation}
with the Oseen-Frank density $W(d,\nabla d)$ of the form
\begin{equation*}
W(d,\nabla d)=k_1(\text{div } d)^2+k_2 (d\cdot\text{curl }d)^2+k_3
|d\times \text{curl } d|^2+(k_2+k_4)[\text {tr} (\nabla d)^2-(\text {div
}d)^2],
\end{equation*}
where $k_1,k_2,k_3,k_4$ are the Frank elastic constants. $\sigma^L$ is the Leslie stress tensor of the form
\begin{equation}\label{L-T}
\sigma^L=\mu_1(d\otimes d:\mathcal{D}(u))d\otimes d+\mu_2 N\otimes d+\mu_3 d\otimes N+\mu_4 \mathcal{D}(u)+\mu_5 (\mathcal{D}(u)d)\otimes d+\mu_6 d\otimes (\mathcal{D}(u)d),
\end{equation}
where $\mu_i$, $i=1,2,\cdots, 6$ are the Leslie coefficients. The co-rotational time derivative $N$ of $d$  is defined by
\begin{equation}\label{co-der}
N=d_t+u\cdot\nabla d-\mathcal{A}(u) d.
\end{equation}
Here $\mathcal{D}(u)$ and $\mathcal{A}(u)$ are the deformation and rotation tensor respectively, that is
\begin{equation}
    \mathcal{D}(u)=\frac12 \left(\nabla u+(\nabla u)^\top\right),\qquad \mathcal{A}(u)=\frac12 \left(\nabla u-(\nabla u)^\top\right).
\end{equation}
The molecular field $h$ in \eqref{GILC} is given by
\begin{equation}\label{M-F}
h=\mathrm{div}\left(\frac{\partial W(d,\nabla d)}{\partial (\nabla d)}\right)-\frac{\partial W(d,\nabla d)}{\partial d}.
\end{equation}
The general Ericksen-Leslie system \eqref{GILC} is very complicated so mathematical studies of the full system are very challenging. Considering the one-constant approximation of the Oseen-Frank energy density, that is, $k_1=k_2=k_3,k_4=0$ and ignoring the effect of Leslie tensor due to director, one can reduce \eqref{GILC} to the following simplified system, which persist the essential strong coupling and nonlinearity,
\begin{equation}\label{ins}
	\begin{cases}
	\rho_t+\mathrm{div}(\rho u)=0,\\
		(\rho u)_t+\mathrm{div}(\rho u\otimes u)+\nabla P-\mathrm{div}(2\mu\mathcal{D}(u))=-\nu\mathrm{div}(\nabla d\odot\nabla d), \\
        d_t+u\cdot\nabla d=\lambda(\Delta d+|\nabla d|^2d),\\
        \dv u=0,|d|=1,
	\end{cases} 
\end{equation}
where $\mu$ is the fluid viscosity coefficient, $\nu>0$ is the Frank coefficient and  $\lambda>0$ stands for the microscopic elastic relaxation time. $\nabla d\odot\nabla d$ is the elastic tensor with $ij$-component $\partial_id\cdot\partial_j d$. Even for the simplified system \eqref{ins}, it is a strongly coupled system between the inhomogeneous incompressible
Navier–Stokes equations and the transported harmonic heat flow. The global existence, uniqueness, and regularities of this simplified system are fascinating and difficult issues in the mathematical theory of liquid crystal flows.     

In the last fifteen years, there has been some great progress for homogeneous liquid crystal flows. For the two-dimensional problem, the global existence of weak solutions with finitely many singular times is proved in \cite{lin2010liquid,hong2011global,hong2012global,huang2014regularity,wang2014global} for both simplified and general Ericksen-Leslie systems. The uniqueness of those weak solutions is established in \cite{li2016uniqueness,wang2016uniqueness} and reference therein. However, the regularity of those two-dimensional weak solutions can not be expected in general. Lei-Li-Zhang\cite{lei2014remarks} established a rigidity theorem for harmonic maps under a geometric angle condition and obtained global wellposedness of smooth solutions to a simplified system for a class of large initial data. Recently, Lai-Lin-Wang-Wei-Zhou\cite{lai2022finite} developed a new inner-outer gluing method to construct solutions to a simplified system
that blow up exactly at any given finite points in a bounded domain as $t$ goes to a finite time $T$. For the three-dimensional problem, global existence of weak solutions is a long-standing open problem. Lin-Wang\cite{lin2016global} solved the problem for the simplified system in the case that if the initial director satisfies the semi-sphere condition $d_0\in \mathbb{S}^2_+$. It is still open for both the simplified system without this geometric condition and the general Ericksen-Leslie system. If the initial data have more regularities, we may not obtain global well-posed theory in general, refer to \cite{huang2016finite} for finite blow-up results. The local well-posedness and blow-up criteria of strong solutions are established in \cite{huang2012blow,hong2014blow,hieber2016dynamics,wang2013well,wang2014global}. For rough initial data,  Hineman-Wang \cite{hineman2013well} established the local well-posedness of solutions to the simplified system with both initial velocity and gradient of director in uniformly local  $L^3$-integrable spaces. This result is generalized by Hong and the second author \cite{hong2019well} to the Ericksen-Leslie system with Oseen-Frank energy through a different idea, which is used to overcome the invalid of direct $L^3$-type energy estimates because of the nonlinear structure of the Oseen-Frank energy.  

When considering inhomogeneous incompressible liquid crystal flows, global existence of weak solutions is unknown for both two and three-dimensional problems even for the simplified system \eqref{ins}, because the local energy estimate method developed in \cite{lin2010liquid,hong2011global} is invalid. There are only some well-posedness of strong solutions results for \eqref{ins}. Wen and Ding\cite{wen2011solutions} firstly established  the local existence and uniqueness of strong solutions to the 2D and 3D Cauchy problem with vacuum and extended it to a global one in 2D case with positive initial density and small initial basic energy. Li\cite{li2014global} obtained local well-posedness of 2D Cauchy problem, and the global well-posedness in the case that the initial density is positive and the initial direction field satisfies a geometric angle condition as in \cite{lei2014remarks}. Later, Gong-Li-Xu \cite{gong2016local} obtained local well-posedness of the 3D Cauchy problem by a biharmonic regularization approach.   When the viscosity $\mu$ depends on the density, Gao-Tao-Yao \cite{gao2016strong} proved the local well-posedness and blow-up criteria for the initial boundary value problem. For the global well-posedness, Li-Liu-Zhong\cite{li2017global} obtained  the result for
2D Cauchy problem provided the initial density and the 
gradient of director decay, which was not too slow at the far field, and the basic energy 
sufficiently small. This smallness assumption on basic energy was instead by the geometric condition on the initial director in \cite{liu2016global}. Hu-Liu\cite{hu2018global} proved the
result for the 3D Cauchy problem with small density, velocity, and director in Besov spaces. Very recently, Ye-Zhu\cite{ye2024existence} obtained global strong solution to 3D Cauchy problem with density-dependent viscosity provided the initial norm $\|u_0\|_{\dot{H}^s}+\|\nabla d_0\|_{\dot{H}^s}(\frac{1}{2}<s\leq 1)$ is suitably small. 

The global well-posedness results for strong solutions to \eqref{ins} in the three-dimension mentioned above all require some smallness assumption on both velocity and director. Very recently, Huang, the first and last authors \cite{huang2024global} surprisingly obtained a global large solution to three-dimensional inhomogeneous incompressible Navier-Stokes with density-dependent viscosity $\mu(\rho)=\rho^\alpha,$ as long as $\alpha>1$ and the initial data are sufficiently large. In fact, in this case, the Reynolds number $Re=\rho u L/\mu(\rho)$ will be small provided $\alpha>1$ and the density $\rho$ is large enough, no matter how fast the flow speed is.
At low Reynolds numbers, the flows behave like a laminar flow which is believed to be stable as many experiments revealed. It motivates that this Navier-Stokes flow may be globally stable as long as the density is large at any
time. Inspired by the idea in \cite{huang2024global}, it is interesting to investigate whether we can remove the smallness assumption on the velocity in the global well-posedness results for inhomogeneous liquid crystal flow \eqref{ins} with density-dependent viscosity $\mu(\rho)=\rho^\alpha,(\alpha>1)$. It should be emphasized that it seems impossible to further remove the smallness of the director by the approach in \cite{huang2024global}, since the director is only transported with the velocity and the decay of velocity obtained in \cite{huang2024global} for Navier-Stokes part can not be used to improve the regularity of director. About other important results on inhomogeneous Naiver-Stokes, refer to\cite{desjardins1997regularity,abidi2015global,abidi2015global1,cho2004unique,lu2019local,zhang2015global,he2021global,craig2013global,huang2014global,huang2015global}. On the other hand, the system \eqref{ins} admits the following scaling invariance in the sense that, if $(\rho,u,d)$ is a solution to \eqref{ins}, then for any $\tau>0$, 
\begin{equation}\label{scaling}
    \rho_\tau(x,t)=\rho(\tau x,\tau^2 t),~~ u_\tau(x,t)=\tau u(\tau x,\tau^2 t), ~~d_\tau(x,t)=d(\tau x,\tau^2t)
\end{equation}
is still a solution to \eqref{ins}. There are many works on the well-posedness of homogeneous or inhomogeneous Navier-Stokes in critical functional spaces with norms invariant under the scaling \eqref{scaling} for $(\rho,u)$, see \cite{fujita1964navier,zhang2020global,hao2024global} and the reference therein. As to liquid crystal flows, it should be remarked that \cite{ye2024existence} require smallness in sub-critical spaces and \cite{hu2018global} require smallness in critical space but for all the density, velocity and director.
It is interesting whether we can only ask for the smallness of the norm of director in some critical spaces to get the global well-posedness of strong solutions to \eqref{ins}.

In this paper, we concern the inhomogeneous incompressible simplified Ericksen-Leslie system \eqref{ins} for liquid crystal flows in a three-dimensional bounded domain $\Omega\subset\mathbb{R}^3$ with density-dependent viscosity $\mu(\rho)$,
% \begin{equation}
% 	\begin{cases}
% 		\rho_t+\mathrm{div}(\rho u)=0,&~~\mathrm{in}~\Omega\times [0,T],\\
% 		(\rho u)_t+\mathrm{div}(\rho u\otimes u)+\nabla P-\mathrm{div}(2\mu(\rho)\mathcal{D}(u))=-\nu\mathrm{div}(\nabla d\odot\nabla d),&~~\mathrm{in}~\Omega\times [0,T], \\
%         d_t+u\cdot\nabla d=\lambda(\Delta d+|\nabla d|^2d),&~~\mathrm{in}~\Omega\times [0,T],\\
%         \dv u=0,|d|=1&~~\mathrm{in}~\Omega\times [0,T]\\
%     \rho(0,x)=\rho_0(x),~~u(0,x)=u_0(x),&~~\mathrm{in}~\Omega,
% 	\end{cases} 
% \end{equation}
% where $t\geq0, x=(x_1,x_2,x_3)\in\Omega$ are time and space variables, respectively. $\rho=\rho(x,t)$, $u=(u_1(x,t),u_2(x,t),u_3(x,t))$, $P$ and $d=(d_1(x,t),d_2(x,t),d_3(x,t))$ represent, respectively, the density, the velocity, the pressure and the director of liquid crystal flow.
% \begin{equation}
% 	\mathcal{D}(u)=\frac12 \left(\nabla u+(\nabla u)^\top\right),
% \end{equation}
% is the deformation tensor. $\nabla d\odot\nabla d$ is the elastic tensor with $ij$-component $\partial_id\cdot\partial_j d$. Here $\mu(\rho)$ stands for the viscosity and is a function of $\rho$, 
which is assumed to satisfy
\begin{equation}\label{vis-d}	\mu(\rho)=\mu\rho^\alpha,\ \mu>0,\ \alpha>0. 
\end{equation}
We will investigate the global existence and uniqueness of strong solutions to the initial boundary value problem to the system \eqref{ins}, \eqref{vis-d} with the following Dirichlet boundary condition for the velocity and Neumann boundary condition for the director
\begin{equation}\label{bc}
    u=0,~~\frac{\partial d}{\partial n}=0~~\mathrm{on}~\partial\Omega\times [0,T]
\end{equation}
and initial data
\begin{equation}\label{ini-data}
\rho(0,x)=\rho_0(x),~~u(0,x)=u_0(x)~~d(0,x)=d_0(x), ~~~~x\in\Omega.
\end{equation}

Before stating the main results, we explain the notation and conventions used throughout this paper. 
Denote
\begin{equation*}
	\int f \mathrm{dx}=\int_{\Omega} f\mathrm{dx}.
\end{equation*}
For a positive integer $k$ and $p\geq1$, we denote the standard Lesbegue and Sobolev spaces as follows:
\begin{equation*}
\begin{gathered}
\|f\|_{L^p}=\|f\|_{L^p(\Omega)},\ \|f\|_{W^{k,p}}=\|f\|_{W^{k,p}(\Omega)},\ \|f\|_{H^k}=\|f\|_{W^{k,2}(\Omega)},\\
C^\infty_{0,\sigma}=\{f\in C^\infty_0(\Omega):\mathrm{div}f=0\}, \ H_0^1=\overline{C^\infty_0}, \\
H_{0,\sigma}^1=\overline{C^\infty_{0,\sigma}},\ \mathrm{closure}\ \mathrm{in}\ \mathrm{the}\ \mathrm{norm}\ \mathrm{of}\  H^1.    
\end{gathered}
\end{equation*}

Our main result is stated as follows.
\begin{theorem}\label{global} 
    Let $\Omega$ be a bounded smooth domain in $\mathbb{R}^3$. Assume that
    \begin{equation}\label{vis-c}
        \alpha>1.
    \end{equation}
    Given constants
    \begin{equation}
        \bar\rho> 1,\quad C_0\geq 1.
    \end{equation}
    Suppose that the initial data $(\rho_0,u_0)$ satisfies 
    \begin{equation}\label{ia}
		\bar \rho \le \rho_0\le C_0 \bar \rho,\quad \rho_0 \in %L^{\gamma} \cap D^{1,2} \cap 
		W^{1,q},\ 3<q<6,\quad  u_0 \in H_{0,\sigma}^1 \cap H^2,\quad d_0\in H^3.
    \end{equation}
    Then there exists positive constants $\Lambda_0, \varepsilon_0$ depending only
	on $C_0,\mu, \nu, \lambda, \alpha,$ $\|\nabla\rho_0\|_{L^q}, \| u_0\|_{H^1},$ $ \|\nabla d_0\|_{H^1},$ and $\Omega$ such that if
	\begin{equation}\label{ini}
		\bar\rho\ge \Lambda_0, \quad \|\nabla d_0\|_{L^3}^2 \le \varepsilon_0,
	\end{equation}
    then the inhomogeneous incompressible simplified Ericksen-Leslie system  (\ref{ins})-(\ref{bc}) admits a unique global strong solution $(\rho,u)$ in $\Omega\times(0,\infty)$ satisfying 
    \begin{equation}
    \left\{ \begin{array}{l}
    \rho\in C([0,\infty);W^{1,q}),\
    \nabla u, P\in C([0,\infty);H^1)\cap L^2((0,\infty);W^{1,q}),\\
    \rho_t \in C([0,\infty);L^q),\
    \sqrt{\rho}u_t\in L^\infty((0,\infty);L^2),\ 
    u_t\in L^2((0,\infty);H^1_0),\\
    d\in C([0,T;H^3])\cap L^2(0,T;H^4), d_t\in C([0,T;H^1])\cap L^2(0,T;H^2).
    \end{array} \right.
\end{equation}
\end{theorem}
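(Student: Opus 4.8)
The plan is to first invoke local existence and uniqueness of a strong solution $(\rho,u,d)$ on a maximal interval $[0,T^*)$ — for the density–dependent viscosity problem without vacuum this is contained in, or a routine adaptation of, the construction in \cite{gao2016strong} — and then to derive a priori bounds, uniform on $[0,T)$ for every $T<T^*$, which cannot degenerate as $t\uparrow T^*$; a continuity/bootstrap argument then forces $T^*=+\infty$. Two elementary observations drive everything. Since $\mathrm{div}\,u=0$ and $\rho$ is transported, $\bar\rho\le\rho(x,t)\le C_0\bar\rho$ for all $t$, and (using $d\cdot d_t=\tfrac12\partial_t|d|^2$ together with uniqueness) $|d(\cdot,t)|\equiv1$. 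Writing $\rho=\bar\rho\tilde\rho$ with $1\le\tilde\rho\le C_0$ and dividing the momentum equation by $\bar\rho^\alpha$,
\begin{equation*}
\bar\rho^{1-\alpha}\tilde\rho\,(u_t+u\cdot\nabla u)+\nabla(\bar\rho^{-\alpha}P)=\mathrm{div}\big(2\mu\tilde\rho^\alpha\mathcal{D}(u)\big)-\nu\bar\rho^{-\alpha}\,\mathrm{div}(\nabla d\odot\nabla d),
\end{equation*}
so for $\alpha>1$ and $\bar\rho$ large the inertial and director forcing terms carry a vanishingly small prefactor; equivalently $\mathcal{D}(u)=(2\mu\rho^\alpha)^{-1}(\cdots)$, so the lower bound $\rho\ge\bar\rho$ makes $\nabla u$ small in every norm controlled by the right-hand side, while $\|u\|_{L^2}\le\bar\rho^{-1/2}\|\sqrt\rho u\|_{L^2}$ is automatically small. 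This smallness of the effective Reynolds number is what replaces the usual smallness assumption on the velocity.

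\textbf{Basic energy and the scaling-critical norm of $\nabla d$.} The standard cancellation of the coupling terms gives
\begin{equation*}
\frac{d}{dt}\Big(\tfrac12\|\sqrt\rho u\|_{L^2}^2+\tfrac\nu2\|\nabla d\|_{L^2}^2\Big)+2\mu\|\rho^{\alpha/2}\mathcal{D}(u)\|_{L^2}^2+\nu\lambda\big\|\Delta d+|\nabla d|^2d\big\|_{L^2}^2=0,
\end{equation*}
so with $\rho\ge\bar\rho$, Korn and Poincar\'e one gets $\|\sqrt\rho u\|_{L^\infty_tL^2}$, $\|\nabla d\|_{L^\infty_tL^2}$ and $\int_0^\infty\|\nabla u\|_{L^2}^2\,dt\lesssim\bar\rho^{1-\alpha}\|u_0\|_{L^2}^2+\bar\rho^{-\alpha}\|\nabla d_0\|_{L^2}^2$; bounding the velocity dissipation below by $c\bar\rho^{\alpha-1}\|\sqrt\rho u\|_{L^2}^2$ yields exponential decay of $\|\sqrt\rho u(t)\|_{L^2}^2$ up to a remainder of size $O(\varepsilon_0\bar\rho^{1-\alpha})$, and a Poincar\'e--Wirtinger argument near a constant map (legitimate since $\|\nabla d_0\|_{L^2}^2\lesssim\varepsilon_0$ on a bounded domain by $\eqref{ini}$) shows $\int_0^\infty\|\nabla d\|_{L^2}^2\,dt\lesssim\varepsilon_0+\bar\rho^{1-\alpha}$ and $\|\nabla d(t)\|_{L^2}\to0$. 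Next, the $L^3$ energy estimate for $\nabla d$ (apply $\partial_k$ to the director equation, test with $|\nabla d|\partial_k d$ and sum, using $|d|=1$) produces
\begin{equation*}
\frac13\frac{d}{dt}\|\nabla d\|_{L^3}^3+c\lambda\int|\nabla d|\,|\nabla^2 d|^2\le\|\nabla u\|_{L^3}\|\nabla d\|_{L^{9/2}}^3+\lambda\int|\nabla d|^5 .
\end{equation*}
Sobolev applied to $|\nabla d|^{3/2}$ bounds the dissipation below by $c_1\|\nabla d\|_{L^9}^3-c_2\|\nabla d\|_{L^3}^3$, and interpolation gives $\|\nabla d\|_{L^{9/2}}^3\le\|\nabla d\|_{L^3}^{3/2}\|\nabla d\|_{L^9}^{3/2}$ and $\|\nabla d\|_{L^5}^5\le\|\nabla d\|_{L^3}^2\|\nabla d\|_{L^9}^3$, so once $\|\nabla d\|_{L^3}$ is small both right-hand terms are absorbed, leaving a Gronwall inequality driven by $\int_0^\infty\|\nabla u\|_{L^3}^2\,dt$ (small, from the velocity estimates) and $\int_0^\infty\|\nabla d\|_{L^2}^3\,dt$ (small, from the above). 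Choosing the bootstrap constants and $\varepsilon_0$ in the right order then propagates $\sup_t\|\nabla d(t)\|_{L^3}^2\lesssim\varepsilon_0$.

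\textbf{First–order velocity estimates, higher regularity of $d$, density.} I would test the momentum equation with $u_t$ to obtain
\begin{equation*}
\frac{d}{dt}\Big(\mu\|\rho^{\alpha/2}\mathcal{D}(u)\|_{L^2}^2-\nu\!\int\nabla d\odot\nabla d:\nabla u\Big)+\|\sqrt\rho u_t\|_{L^2}^2=-\!\int\rho u\!\cdot\!\nabla u\!\cdot\! u_t+\mu\!\int\partial_t(\rho^\alpha)|\mathcal{D}(u)|^2-\nu\!\int\partial_t(\nabla d\odot\nabla d):\nabla u,
\end{equation*}
where $\partial_t(\rho^\alpha)=-u\cdot\nabla(\rho^\alpha)$ and $\nabla d_t$ (hence $\nabla^3 d$) enters the last term; combining with the stationary Stokes system $-\mathrm{div}(2\mu\rho^\alpha\mathcal{D}(u))+\nabla P=-\rho(u_t+u\cdot\nabla u)-\nu\,\mathrm{div}(\nabla d\odot\nabla d)$, whose elliptic theory with the bounded coefficient $\mu\rho^\alpha$ controls $\|\nabla u\|_{H^1\cap W^{1,q}}$ and $\|P\|_{H^1\cap W^{1,q}}$ in terms of $\|\sqrt\rho u_t\|_{L^2}$, $\|u\cdot\nabla u\|$, the director forcing and commutators with $\nabla\mu(\rho)=\alpha\mu\rho^{\alpha-1}\nabla\rho$, closes $\|\nabla u\|_{L^\infty_tL^2}$, $\|\sqrt\rho u_t\|_{L^\infty_tL^2}$, $u_t\in L^2_tH^1_0$, $\nabla u,P\in L^2_tW^{1,q}$ — and in particular $\int_0^\infty\|\nabla u\|_{L^3}^2\,dt$ small. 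The bookkeeping is essential: the large commutator $\nabla\mu(\rho)\sim\bar\rho^{\alpha-1}$ multiplies $\mathcal{D}(u)\sim\bar\rho^{-\alpha}$, and each inertial contribution carries a power $\bar\rho^{1-\alpha}$, so all products remain controlled (indeed small) for $\bar\rho$ large. Differentiating the director equation and running parabolic energy estimates (the terms $u\cdot\nabla d$ and $|\nabla d|^2d$ handled via the smallness of $\|\nabla d\|_{L^3}$, the embeddings $H^3\hookrightarrow W^{1,\infty}\cap W^{2,6}$, and the velocity bounds) gives $d\in L^\infty_tH^3\cap L^2_tH^4$, $d_t\in L^\infty_tH^1\cap L^2_tH^2$, with only $\|u_0\|_{H^1}$ and $\|\nabla d_0\|_{H^1}$ (not $\|u_0\|_{H^2},\|d_0\|_{H^3}$) entering the thresholds $\Lambda_0,\varepsilon_0$. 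Finally, from $\partial_t\nabla\rho+u\cdot\nabla\nabla\rho=-(\nabla u)^{\top}\nabla\rho$ and $W^{1,q}\hookrightarrow L^\infty$ ($q>3$), $\|\nabla\rho(t)\|_{L^q}\le\|\nabla\rho_0\|_{L^q}\exp\!\big(\int_0^t\|\nabla u\|_{L^\infty}\big)$ stays bounded, so $\rho\in C_tW^{1,q}$ and $\rho_t=-u\cdot\nabla\rho\in C_tL^q$. Feeding all of this into the continuity argument, with $\Lambda_0$ large and $\varepsilon_0$ small every bootstrapped bound is strictly improved, so $T^*=+\infty$; uniqueness follows from a routine energy estimate on the difference of two solutions in the above class.

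\textbf{Main obstacle.} The heart of the proof is that the director cannot be made small by the large–density mechanism: $d$ is only transported, and the fast decay of $u$ does not feed the $d$-equation, so one must instead \emph{propagate} the scaling–critical smallness $\|\nabla d\|_{L^3}$. This is a genuinely critical estimate — the coercive quantity $\int|\nabla d|\,|\nabla^2 d|^2$ must dominate both the cubic coupling $\int|\nabla u|\,|\nabla d|^3$ and the quintic harmonic–map term $\int|\nabla d|^5$, which is possible only because $\varepsilon_0$ is small and $\|\nabla u\|_{L^3}$ is small and time–integrable. But that control of $\|\nabla u\|_{L^3}$ comes from the first–order velocity estimates, which require $\nabla d_t$, hence the $H^3$-regularity of $d$, whose own closure again uses the $L^3$-smallness of $\nabla d$. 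Disentangling this loop while scrupulously tracking every power of $\bar\rho$ — so that the large–density smallness genuinely beats both the large inertia and the large viscosity–gradient commutators, simultaneously in the velocity and the director estimates — is the technical core of the argument.
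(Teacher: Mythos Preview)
Your overall architecture --- local existence plus a bootstrap on $(\|\nabla d\|_{L^3},\ \bar\rho^\alpha\|\nabla u\|_{L^2}^2,\ \|\nabla\rho\|_{L^q})$, with the large-density mechanism making the inertial and viscosity-commutator contributions small --- matches the paper. But the step you flag as ``the heart of the proof'' has a real gap in the bounded-domain setting.

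\medskip
\textbf{The problem with the direct $L^3$ estimate on $\nabla d$.} You propose to apply $\partial_k$ to the director equation and test with $|\nabla d|\,\partial_k d$. The coercive term comes from
\[
-\lambda\int_\Omega \partial_k\Delta d\cdot|\nabla d|\,\partial_k d\,dx
= \lambda\int_\Omega \nabla\partial_k d:\nabla\big(|\nabla d|\,\partial_k d\big)\,dx
-\lambda\int_{\partial\Omega}\partial_n\partial_k d\cdot|\nabla d|\,\partial_k d\,dS .
\]
The Neumann condition $\partial_n d=0$ does \emph{not} kill this boundary integral: for tangential $k$, $\partial_n\partial_k d$ differs from $\partial_k(\partial_n d)=0$ by curvature terms of the boundary, and the resulting surface integral (of order $\int_{\partial\Omega}|\nabla d|^3$) cannot be absorbed. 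The paper points this out explicitly: the direct $L^3$-type energy estimate, which works for the Cauchy problem as in Hineman--Wang, is not available here. Your interpolation chain $\|\nabla d\|_{L^9}^3\lesssim\int|\nabla d|\,|\nabla^2 d|^2+\|\nabla d\|_{L^3}^3$ etc.\ is fine, but you never actually produce the clean dissipation $\int|\nabla d|\,|\nabla^2 d|^2$ on a domain with boundary.

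The paper avoids this by propagating $\|\nabla d\|_{L^3}$ \emph{indirectly} via the interpolation
\[
\|\nabla d\|_{L^3}^2\le C\|\nabla d\|_{L^2}\,\|\nabla^2 d\|_{L^2},
\]
and closing $L^2$-type estimates on $\nabla d$ and on $\Delta d$ separately. For the latter one tests $\nabla d_t-\lambda\nabla\Delta d=\cdots$ against $-\nabla\Delta d$; the Neumann condition makes the time-boundary term vanish ($\int_{\partial\Omega}\partial_t(\partial_n d)\cdot\Delta d=0$), so one cleanly gets $\tfrac{d}{dt}\|\Delta d\|_{L^2}^2+\lambda\|\nabla\Delta d\|_{L^2}^2\le\cdots$, and the right-hand side is absorbed using only the \emph{assumed} smallness of $\|\nabla d\|_{L^3}$. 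The bootstrap then closes because $\|\nabla d\|_{L^2}\le C\|\nabla d_0\|_{L^3}$ and $\|\nabla^2 d\|_{L^2}\le C\|\nabla^2 d_0\|_{L^2}$, so the product is $\le\delta$ once $\|\nabla d_0\|_{L^3}$ is small.

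\medskip
\textbf{A secondary omission.} To close $\|\nabla\rho\|_{L^q}$ you need $\int_0^T\|\nabla u\|_{L^\infty}\,dt$ bounded \emph{uniformly in $T$}. Knowing $\nabla u\in L^2_tW^{1,q}$ only gives a bound growing like $T^{1/2}$. The paper obtains the uniform bound through time-weighted estimates: it first proves $\sup_t t\|\sqrt\rho u_t\|_{L^2}^2+\int t\bar\rho^\alpha\|\nabla u_t\|_{L^2}^2\le C\bar\rho^\alpha$ and $\sup_t t^2\|\sqrt\rho u_t\|_{L^2}^2+\int t^2\bar\rho^\alpha\|\nabla u_t\|_{L^2}^2\le C\bar\rho$ (by differentiating the momentum equation in $t$, testing with $tu_t$ and $t^2u_t$), and then splits $\int_0^T\|\rho u_t\|_{L^q}\,dt$ at $t=1$ and uses these weights together with $\|u\|_{W^{2,q}}\lesssim\bar\rho^{-\alpha}\|\rho u_t\|_{L^q}+\cdots$ to get $\int_0^\infty\|\nabla u\|_{L^\infty}\,dt\le C\bar\rho^{-D}$ for some $D>0$. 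Your exponential-decay heuristic points in the right direction but does not substitute for this computation; it should be made explicit.
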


\begin{remark}
    Conditions \eqref{vis-c} and \eqref{ini} imply that the initial Reynolds number is suitably small, therefore the conclusion of Theorem 1.1 is equivalent to proving a fact that the flow is globally stable when the initial Reynolds number and the forcing term in the momentum equation involving the director $d$ are small enough. 
\end{remark}
\begin{remark}
    The smallness of critical norm $\|\nabla d_0\|_{L^3}$ plays an important role in Theorem \ref{global}. It is interesting to consider other critical space, for example, $\dot{H}^\frac{1}{2}$, instead of $L^3$ space of $\nabla d$. We will study this problem in the future. 
\end{remark}
\begin{remark}
    Theorem \ref{global} can also be applied to boundary condition \eqref{ins}$_4$ when replaced by Navier-slip boundary conditions and periodic ones. However, the Cauchy problem presents essential difficulties that will be addressed in future work.
\end{remark}
\begin{remark}
    Theorem \ref{global} can be generalized to the inhomogeneous general Ericksen-Leslie system \ref{GILC} in a three-dimensional periodic domain. It can also be applied to the two-dimensional problem with either $\|\nabla d_0\|_{L^2}$ small or the geometric condition $d_{03}\geq c_0>0$. 
\end{remark}

We now provide our analysis and commentary on the key aspects of this paper. The key observation is inspired by the definition of the Reynolds number, where we can remove the smallness of initial velocity by making the initial density sufficiently large. 
The main idea is to use time-weighted energy estimates to the compressible Navier–Stokes equations established by Hoff \cite{hoff1995global}, which is successfully used to the inhomogeneous incompressible Navier-Stokes equations by Huang-Wang \cite{craig2013global,huang2014global,huang2015global} and many other works, see \cite{paicu2013global} and references therein. In order to apply this idea to the inhomogeneous liquid crystal flows, it is crucial to deal with the extra term $\mathrm{div}(\nabla d\odot\nabla d)$, which is supercritical and coupled with the estimates of director. Motivated by Hong and the second authors' work \cite{hong2019well} and Hineman-Wang \cite{hineman2013well} on homogeneous liquid crystal flows, it is essential to utilize the smallness of $\|\nabla d\|_{L^\infty_tL^3_x}$ to obtain the dissipation of $d$ so that to control the term involving $\mathrm{div}(\nabla d\odot\nabla d)$ in the process of time-weighted energy estimate of $u$. Moreover, this smallness assumption of $\|\nabla d\|_{L^\infty_tL^3_x}$ may not be verified directly, as in \cite{hineman2013well} for the Cauchy problem, by deriving $L^3$-type energy estimate of $\nabla d$ in case that the boundary integrals from integration by parts can not be controlled. Instead, as in \cite{hong2019well}, we derive $L^2$-type energy estimate of $\nabla d$ and $\nabla^2d$ and use the interpolation inequality to close the smallness assumption of $\|\nabla d\|_{L^\infty_tL^3_x}$.

Let's briefly sketch the proof. First we assume that $\mathcal{E}_d(T)$, $\mathcal{E}_\rho(T)$, and $\mathcal{E}_u(T)$ defined in \eqref{As1}, \eqref{As2}, and \eqref{As3} are less than $2\delta$, 3$\mathcal{E}_\rho(0)$,  and 
3$\mathcal{E}_u(0)$ respectively, then we prove that in fact $\mathcal{E}_d(T)$ is less than $\delta$, $\mathcal{E}_\rho(T)$ is less than 2$\mathcal{E}_\rho(0)$, and $\mathcal{E}_u(T)$ is less than 2$\mathcal{E}_u(0)$ under the assumption that the initial density is large enough as well as $\|\nabla d_0\|_{L^3}$ is small enough. On the other hand, the control of $\|\nabla u\|_{L^1_tL^\infty_x}$ leads to uniform estimates for other higher-order quantities, which guarantees the extension of local strong solutions.
One of the main ingredients is a time-independent estimate which is essential due to exponential time decay estimates for $u$ and $d$ in a bounded domain.

\section{Preliminaries}
First, the following local existence theory, where the initial density is strictly away from vacuum, can be shown by similar arguments as in Gao-Tao-Yao \cite{frank1958liquid}:
\begin{lemma}\label{local}
    Assume that the initial data $(\rho_0, u_0, d_0)$ satisfies the regularity condition \eqref{ia}.
  %   and the compatibility condition
  %   \begin{equation}\label{cc}
		% -\mathrm{div}\left(\mu \rho_0^\alpha \bigl(\nabla u_0+\nabla^\bot u_0\bigr)\right)+\nabla P_0=\rho_0^\frac{1}{2}g,
  %   \end{equation}
  %   for some $(P_0,g)\in H^1\times L^2$. 
    Then there exists a small time $T$ and a unique strong solution $(\rho, u, d, P)$ to the initial boundary value problem \eqref{ins}-\eqref{ini-data} such that
\begin{equation}\label{l-r}
    \left\{ \begin{array}{l}
    \rho\in C([0,T];W^{1,q}),\
    \nabla u, P\in C([0,T];H^1)\cap L^2([0,T];W^{1,q}),\\
    \rho_t \in C([0,T];L^q),\
    \sqrt{\rho}u_t\in L^\infty([0,T];L^2),\ 
    u_t\in L^2([0,T];H^1_0),\\
    d\in C([0,T;H^3])\cap L^2(0,T;H^4), d_t\in C([0,T;H^1])\cap L^2(0,T;H^2).
    \end{array} \right.
\end{equation}
% Furthermore, if $T^\ast$is the maximal existence time of the local strong solution $(\rho, u)$, then either
% $T^\ast=\infty$ or
% \begin{equation}\label{blow-up}
%     \sup_{0\leq t\leq T^\ast}\left(\|\nabla\rho\|_{L^q}+\|\nabla u\|_{L^2}\right)=\infty.
% \end{equation}
\end{lemma}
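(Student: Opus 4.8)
The plan is to construct the solution by a linearization-and-iteration scheme of Cho--Kim type, as adapted to the inhomogeneous liquid crystal system with density-dependent viscosity by Gao--Tao--Yao, exploiting that under \eqref{ia} the density stays strictly positive. Set $u^{0}\equiv 0$ (or a divergence-free extension of $u_{0}$), and given a divergence-free $u^{k}$ solve three linear problems in turn. First, the transport equation $\partial_t\rho^{k+1}+u^{k}\cdot\nabla\rho^{k+1}=0$, $\rho^{k+1}(0)=\rho_{0}$; since $\mathrm{div}\,u^{k}=0$ this is a pure rearrangement, so $\bar\rho\le\rho^{k+1}\le C_{0}\bar\rho$ holds \emph{exactly} for every $k$ and every $t$, and $\|\nabla\rho^{k+1}(t)\|_{L^{q}}$ is propagated by $\tfrac{d}{dt}\|\nabla\rho^{k+1}\|_{L^{q}}\lesssim\|\nabla u^{k}\|_{L^{\infty}}\|\nabla\rho^{k+1}\|_{L^{q}}$, which is integrable in time because $\nabla u^{k}\in L^{2}(0,T;W^{1,q})\hookrightarrow L^{2}(0,T;L^{\infty})$ for $q>3$.

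Next, solve the linear parabolic equation $\partial_t d^{k+1}+u^{k}\cdot\nabla d^{k+1}=\lambda\Delta d^{k+1}+\lambda|\nabla d^{k}|^{2}d^{k+1}$ with $\partial_n d^{k+1}=0$ and $d^{k+1}(0)=d_{0}$, by standard $L^{2}$-parabolic theory: $\nabla d^{k}\in L^{\infty}_tH^{2}\hookrightarrow L^{\infty}_{t,x}$ makes the zeroth-order coefficient bounded, and differentiating the equation in $t$ and $x$ and using maximal regularity yields $d^{k+1}\in C([0,T];H^{3})\cap L^{2}(0,T;H^{4})$, $\partial_t d^{k+1}\in C([0,T];H^{1})\cap L^{2}(0,T;H^{2})$, provided $\nabla d^{k}\odot\nabla d^{k}$ and $|\nabla d^{k}|^{2}d^{k}$ are controlled in $L^{2}_tH^{1}$, which follows from the previous iterate's bounds. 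Finally, for this fixed $k$ solve the linear momentum system
\begin{equation*}
\rho^{k+1}\partial_t u^{k+1}+\rho^{k+1}(u^{k}\cdot\nabla)u^{k+1}-\mathrm{div}\big(2\mu(\rho^{k+1})\mathcal{D}(u^{k+1})\big)+\nabla P^{k+1}=-\nu\,\mathrm{div}\big(\nabla d^{k+1}\odot\nabla d^{k+1}\big),
\end{equation*}
together with $\mathrm{div}\,u^{k+1}=0$, $u^{k+1}|_{\partial\Omega}=0$, $u^{k+1}(0)=u_{0}$. Existence for fixed $k$ follows from a Galerkin approximation in time; the elliptic part is uniformly elliptic since $\mu(\rho^{k+1})\ge\mu\bar\rho^{\alpha}>0$, and the spatial regularity $u^{k+1}\in W^{2,q}$, $P^{k+1}\in W^{1,q}$ comes from the stationary Stokes estimate with $W^{1,q}$ viscosity coefficient applied to $-\mathrm{div}(2\mu(\rho^{k+1})\mathcal{D}(u^{k+1}))+\nabla P^{k+1}=F^{k+1}\in L^{q}$, where $F^{k+1}$ collects the inertial and coupling terms.

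The next step is a uniform-in-$k$ a priori bound on a short interval $[0,T_{*}]$ with $T_{*}$ independent of $k$. Because $\rho^{k+1}$ stays comparably bounded, the estimates are the classical ones: the basic energy identity for $(u^{k+1},d^{k+1})$, then the first-order estimate obtained by testing the momentum equation with $\partial_t u^{k+1}$, which controls $\|\sqrt{\rho^{k+1}}\,\partial_t u^{k+1}\|_{L^{2}_tL^{2}}$ and $\|\nabla u^{k+1}\|_{L^{\infty}_tH^{1}}$ modulo the coupling term, the latter handled with the $d$-estimates above. Assembling these into a single quantity $\mathcal{X}^{k+1}(t)=\|\nabla u^{k+1}\|_{H^{1}}^{2}+\|\sqrt{\rho^{k+1}}\,\partial_t u^{k+1}\|_{L^{2}}^{2}+\|\nabla d^{k+1}\|_{H^{2}}^{2}+\|\partial_t d^{k+1}\|_{H^{1}}^{2}+\|\nabla\rho^{k+1}\|_{L^{q}}^{2}$ and closing a Gronwall inequality of the form $\tfrac{d}{dt}\mathcal{X}^{k+1}\lesssim \Phi(\mathcal{X}^{k})\,(1+\mathcal{X}^{k+1})^{\theta}$, one chooses $T_{*}$ small so that $\sup_{[0,T_{*}]}\mathcal{X}^{k+1}\le 2\mathcal{X}(0)$ for all $k$ by induction. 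With these bounds, show $\{(\rho^{k},u^{k},d^{k})\}$ is Cauchy in weaker norms (e.g. $\rho^{k}$ in $C([0,T_{*}];L^{2})$, $u^{k}$ in $C([0,T_{*}];L^{2})\cap L^{2}(0,T_{*};H^{1}_{0})$, $d^{k}$ in $C([0,T_{*}];H^{1})\cap L^{2}(0,T_{*};H^{2})$) via difference estimates on a possibly smaller interval (or invoke Aubin--Lions compactness), pass to the limit to get a solution in the class \eqref{l-r} with the stated bounds from lower semicontinuity. Uniqueness follows from the same difference estimates applied to two solutions, and the constraint $|d|=1$ is recovered a posteriori by noting that $\phi:=|d|^{2}-1$ satisfies $\partial_t\phi+u\cdot\nabla\phi=\lambda\Delta\phi+2\lambda|\nabla d|^{2}\phi$ with $\phi(0)=0$ and $\partial_n\phi=0$, hence $\phi\equiv 0$ by uniqueness for this linear parabolic problem.

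The main obstacle is technical rather than conceptual and has two faces. First, the variable-viscosity elliptic estimate: since $\mu(\rho)=\mu\rho^{\alpha}$ inherits only the $W^{1,q}$ regularity of the transported density, one cannot use plain $H^{2}$ Stokes theory and must rely on the stationary Stokes/Lam\'e estimate with $W^{1,q}$ coefficients and $q>3$ to upgrade $u$ to $W^{2,q}$ and $P$ to $W^{1,q}$ — this is precisely why the hypothesis $3<q<6$ enters. Second, the coupling term $\mathrm{div}(\nabla d\odot\nabla d)$ is of the same differential order as $\mathrm{div}(2\mu(\rho)\mathcal{D}(u))$, so closing the combined energy inequality requires the director equation to supply enough regularity ($d\in L^{2}_tH^{4}$) for $\nabla d\odot\nabla d$ to lie in $L^{2}_tH^{1}$, and the existence time must be taken uniform with respect to both the fluid and director estimates simultaneously. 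For the present local statement, with density bounded away from vacuum, these steps are standard and essentially contained in Gao--Tao--Yao; the genuinely new work of the paper is to make $T_{*}=\infty$ under \eqref{ini}, which is carried out in the subsequent sections.
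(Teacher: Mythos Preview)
The paper does not give a proof of this lemma at all; it simply states that the result ``can be shown by similar arguments as in Gao--Tao--Yao'' and moves on. Your outline --- linearize and iterate \`a la Cho--Kim, propagate the density bounds via the divergence-free transport, solve the linear parabolic director equation and the variable-coefficient Stokes problem at each step, close uniform bounds on a short interval, pass to the limit, and recover $|d|=1$ from the scalar equation for $|d|^2-1$ --- is exactly the argument of Gao--Tao--Yao, so your proposal is correct and matches the intended approach. Your remarks on why $q>3$ is needed for the $W^{1,q}$-coefficient Stokes estimate and on the role of the coupling term are also on point and consistent with how the paper uses Lemma~\ref{stokes-e} later.
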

In this paper, we will employ Bovosgii's theory which can be found in \cite{seregin2014lecture}.
\begin{lemma}\label{bovosgii}
    Let $\Omega$ be a bounded domain with Lipschitz boundary, $1 < p < \infty$. Given $b \in L^p(\Omega)$ with $\int_\Omega b dx=0$, there exists $v \in W_0^{1,p}(\Omega)$ with the following properties:
    \begin{equation*}
        \mathrm{div} v=b,
    \end{equation*}
    in $\Omega$, and
    \begin{equation}
        \|\nabla v\|_{L^p(\Omega)}\leq C(p)\|b\|_{L^p(\Omega)}.
    \end{equation}
\end{lemma}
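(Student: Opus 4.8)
The plan is to use Bogovskii's classical construction of a bounded right inverse to the divergence, treating first the model situation in which $\Omega$ is star-shaped with respect to a ball $B\subset\Omega$. Fix a nonnegative $\omega\in C_0^\infty(B)$ with $\int_\Omega\omega\,\mathrm{dx}=1$, and for $b\in C_0^\infty(\Omega)$ with $\int_\Omega b\,\mathrm{dx}=0$ set
\begin{equation*}
(\mathcal{B}b)(x)=\int_\Omega b(y)\,\frac{x-y}{|x-y|^3}\left(\int_{|x-y|}^{\infty}\omega\!\left(y+\xi\,\frac{x-y}{|x-y|}\right)\xi^{2}\,d\xi\right)\mathrm{dy},\qquad x\in\Omega.
\end{equation*}
Passing to cone coordinates centered at $x$, one checks that $v:=\mathcal{B}b\in C_0^\infty(\Omega)$, that $\mathrm{div}\,v=b-\omega\int_\Omega b\,\mathrm{dx}=b$ (this is where the mean-zero hypothesis enters), and that the kernel representing $b\mapsto\partial_j(\mathcal{B}b)_i$ splits into a convolution-type Calder\'on-Zygmund kernel --- homogeneous of degree $-3$ in $x-y$ with vanishing mean over spheres --- plus a weakly singular remainder of order $-2$. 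The Calder\'on-Zygmund theory then gives $\|\nabla v\|_{L^p(\Omega)}\leq C(p,\Omega)\|b\|_{L^p(\Omega)}$ for every $1<p<\infty$; since the mean-zero functions in $C_0^\infty(\Omega)$ are dense in the mean-zero subspace $L^p_0(\Omega)$, $\mathcal{B}$ extends to a bounded operator $L^p_0(\Omega)\to W_0^{1,p}(\Omega)$ with $\mathrm{div}\,\mathcal{B}b=b$.

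To pass to a general bounded Lipschitz $\Omega$ I would use a covering argument. Since $\partial\Omega$ is Lipschitz, $\overline\Omega$ is covered by finitely many balls $U_1,\dots,U_N$ with each $\Omega_i:=\Omega\cap U_i$ star-shaped with respect to a ball; picking a subordinate partition of unity $\{\varphi_i\}$, the naive splitting $b=\sum_i\varphi_i b$ fails because $\varphi_i b$ need not have mean zero on $\Omega_i$. I would repair this using connectedness of $\Omega$: reorder the cover into a chain with nonempty consecutive overlaps and successively transport the mass defect $\int_{\Omega_i}\varphi_i b\,\mathrm{dx}$ into $\Omega_{i+1}$ through a fixed bump supported in $\Omega_i\cap\Omega_{i+1}$; since $\int_\Omega b\,\mathrm{dx}=0$ the last defect vanishes. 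This yields $b=\sum_i b_i$ with $\mathrm{supp}\,b_i\subset\Omega_i$, $\int_{\Omega_i}b_i\,\mathrm{dx}=0$ and $\sum_i\|b_i\|_{L^p(\Omega_i)}\leq C\|b\|_{L^p(\Omega)}$; applying the star-shaped case on each $\Omega_i$ produces $v_i\in W_0^{1,p}(\Omega_i)\subset W_0^{1,p}(\Omega)$ with $\mathrm{div}\,v_i=b_i$, and $v:=\sum_i v_i$ is the required field.

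The main obstacle is the $L^p$-boundedness of $b\mapsto\nabla\mathcal{B}b$: one must isolate the non-integrable principal part of the explicit kernel and verify the Calder\'on-Zygmund cancellation and H\"ormander conditions, which is precisely where $1<p<\infty$ is used (the bound genuinely fails at the endpoints $p=1,\infty$), together with the somewhat delicate bookkeeping in the mass-transport step so that all intermediate norms remain controlled by $\|b\|_{L^p}$. If the explicit formula and the locality of $\mathcal{B}$ are not needed, a shorter route is functional-analytic: the adjoint of $\mathrm{div}:W_0^{1,p}(\Omega)\to L^p_0(\Omega)$ is minus the gradient acting on $L^{p'}_0(\Omega)$, which is bounded below by Ne\v{c}as's negative-norm inequality $\|f\|_{L^{p'}}\leq C\|\nabla f\|_{W^{-1,p'}}$ valid for mean-zero $f$; hence $\mathrm{div}$ has closed range and is onto by the closed range theorem, which yields a bounded right inverse into $W_0^{1,p}(\Omega)$.
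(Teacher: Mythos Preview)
Your proof proposal is essentially correct: it is the standard Bogovski\u{\i} construction (explicit integral operator on star-shaped domains, Calder\'on--Zygmund estimate for the gradient, then a chain/partition-of-unity reduction for general Lipschitz domains), and the alternative functional-analytic route via Ne\v{c}as's inequality is also valid. There is no genuine gap.

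However, there is nothing to compare against: the paper does not prove this lemma. It is stated in the preliminaries section with the remark ``we will employ Bovosgii's theory which can be found in \cite{seregin2014lecture}'' and is treated as a known black box. So your proposal is not an alternative to the paper's argument but rather a sketch of the proof of a classical result that the paper simply cites. If you are writing this up as part of a reading of the paper, it suffices to cite the standard references (Bogovski\u{\i}, Galdi, or Seregin's lecture notes as the paper does); the detailed construction you outline is unnecessary for the paper's purposes.
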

Next, the following Poincar\'e type inequality can be found in \cite{galdi2011introduction}.
\begin{lemma}
    Let $\Omega$ be bounded and $C^1$-smooth, and let $u$ be a vector function with components in $W^{1,p}(\Omega)$, $1 \leq p<\infty$, and $u\cdot n =0$ at $\partial\Omega$. Then
    \begin{equation}
        \|u\|_{L^p(\Omega)}\leq C\|\nabla u\|_{L^p(\Omega)},
    \end{equation}
    where the constant $C$ depends only on $n$, $q$ and $\Omega$.
\end{lemma}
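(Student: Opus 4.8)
The plan is to prove this by a standard compactness-and-contradiction argument built on the Rellich--Kondrachov theorem, where the only genuinely new input is the identification of the ``kernel'' permitted by the weaker boundary condition $u\cdot n=0$. The point to keep in mind is that this is \emph{not} the usual Poincar\'e inequality for $W_0^{1,p}$ functions: here the tangential components of $u$ need not vanish on $\partial\Omega$, so the estimate can only hold once one rules out nontrivial fields with vanishing gradient that are compatible with the boundary condition. Since $\nabla u=0$ forces $u$ to equal a constant vector $c$, the whole argument hinges on showing that the only constant field with $c\cdot n=0$ on $\partial\Omega$ is $c=0$.

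First I would dispose of this kernel directly via the divergence theorem. For a constant vector $c$ one has $\mathrm{div}\big((c\cdot x)c\big)=|c|^2$, so that
\begin{equation*}
|c|^2\,|\Omega|=\int_\Omega \mathrm{div}\big((c\cdot x)c\big)\,\mathrm{dx}=\int_{\partial\Omega}(c\cdot x)(c\cdot n)\,\mathrm{dS}.
\end{equation*}
If $c\cdot n=0$ on $\partial\Omega$, the right-hand side vanishes, forcing $c=0$. (Equivalently, $f(x)=c\cdot x$ is harmonic with zero Neumann data, hence constant, so $c=\nabla f=0$.) This is exactly the geometric fact that a bounded domain cannot have all its outward normals perpendicular to a single fixed direction.

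With the kernel handled, I would argue by contradiction. Suppose the asserted inequality fails for every constant $C$; then there is a sequence $u_k\in W^{1,p}(\Omega)$ with $u_k\cdot n=0$ on $\partial\Omega$, normalized by $\|u_k\|_{L^p}=1$ and $\|\nabla u_k\|_{L^p}\to 0$. The sequence is bounded in $W^{1,p}(\Omega)$, so by Rellich--Kondrachov a subsequence converges strongly in $L^p$ and weakly in $W^{1,p}$ to some $u$ with $\|u\|_{L^p}=1$. Since $\nabla u_k\to 0$ in $L^p$, the weak limit of the gradients gives $\nabla u=0$, so $u\equiv c$ is constant. The continuity (indeed compactness) of the trace operator $W^{1,p}(\Omega)\to L^p(\partial\Omega)$ passes the condition $u_k\cdot n=0$ to the limit, yielding $c\cdot n=0$ on $\partial\Omega$; by the previous step $c=0$, contradicting $\|u\|_{L^p}=1$.

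The main obstacle — and the reason the statement is not immediate — is precisely this loss of the full Dirichlet condition: with only $u\cdot n=0$ available, the compactness argument produces a constant limit that is not automatically zero, and one must invoke the boundary geometry to kill it. Because the constant $C$ emerges from a nonconstructive compactness argument, it depends on $\Omega$ (and on the dimension $n$ and on $p$) but cannot be made explicit by this method; a quantitative constant would instead require a direct estimate, for instance through the Bogovskii/Helmholtz machinery of Lemma~\ref{bovosgii}. As the inequality is quoted from \cite{galdi2011introduction} in the form used here, the above is intended only to recall why it holds.
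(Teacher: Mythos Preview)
Your argument is correct and is precisely the standard compactness-and-contradiction proof of this Poincar\'e-type inequality; the identification of the kernel via the divergence theorem (or equivalently the Neumann rigidity of the linear function $c\cdot x$) is the key step, and you handle it cleanly. The paper does not supply its own proof of this lemma---it merely quotes the result from Galdi \cite{galdi2011introduction}---so there is nothing to compare against beyond noting that your argument is essentially the one found there. One cosmetic remark: when you infer $u\equiv c$ from $\nabla u=0$ you are tacitly using connectedness of $\Omega$; if $\Omega$ has several components the divergence-theorem step applies to each separately, so the conclusion is unaffected.
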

Also, the well-known Gagliardo-Nirenberg inequality \cite{ladyzhenskaia1968linear} will be frequently used in this paper.
\begin{lemma}\label{G-N}
Assume that $\Omega$ is a bounded Lipschitz domain in $\mathbb{R}^3$. Let 1 $\leq q \le  +\infty$ be a positive extended real quantity. Let j and m  be non-negative integers such that $ j < m$. Furthermore, let $1 \leq r \leq \infty$ be a positive extended real quantity,  $p \geq 1$  be real and  $\theta \in [0,1]$  such that the relations
\begin{equation}
    \dfrac{1}{p} = \dfrac{j}{n} + \theta \left( \dfrac{1}{r} - \dfrac{m}{n} \right) + \dfrac{1-\theta}{q}, \qquad \dfrac jm \leq \theta \leq 1
\end{equation}
hold. Then, 
\begin{equation}\label{GN-p}
    \|\nabla^j u\|_{L^p(\Omega)} \leq C\|\nabla^m u\|_{L^r(\Omega)}^\theta\|u\|_{L^q(\Omega)}^{1-\theta} + C_1\|u\|_{L^q(\Omega)},
\end{equation}
where $u \in L^q(\Omega)$  such that  $\nabla^m u \in L^r(\Omega)$. Moreover, if $q>1$ and $r>3$,
\begin{equation}\label{GN-C}
\|u\|_{C(\bar{\Omega})}\leq C\|u\|^{q(r-3)/(3r+q(r-3))}_{L^q}\|\nabla u\|^{3r/(3r+q(r-3))}_{L^r}+C_2\|u\|_{L^q}.
\end{equation}
where $u \in L^q(\Omega)$  such that  $\nabla u \in L^r(\Omega)$.
In any case, the constant $C > 0$  depends on the parameters $j,\,m,\,n,\,q,\,r,\,\theta$, on the domain $\Omega$, but not on $u$.

In addition, if $u\cdot n|_{\partial\Omega} = 0$ or $\int_\Omega u dx=0$, we can choose $C_1 = C_2 = 0$.
\end{lemma}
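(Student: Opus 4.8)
The plan is to reduce this bounded-domain interpolation inequality to its scale-invariant analogue on the whole space $\mathbb{R}^n$ via a universal extension operator, prove that full-space estimate by a frequency-localization (or integral-representation) argument, and then recover the additive lower-order terms — and show that they disappear under the stated boundary/mean conditions — by an induction on the order of differentiation combined with a Poincar\'e-type inequality.

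First I would establish the homogeneous estimate on $\mathbb{R}^n$, namely
\[
\|\nabla^j v\|_{L^p(\mathbb{R}^n)}\le C\|\nabla^m v\|_{L^r(\mathbb{R}^n)}^{\theta}\,\|v\|_{L^q(\mathbb{R}^n)}^{1-\theta},
\]
for $v$ in the relevant homogeneous Sobolev class, under exactly the scaling relation quoted in the statement. The cleanest route is a Littlewood--Paley decomposition $v=\sum_k\Delta_k v$ together with the Bernstein inequalities $\|\nabla^j\Delta_k v\|_{L^p}\simeq 2^{jk}\|\Delta_k v\|_{L^p}$ and $\|\Delta_k v\|_{L^p}\lesssim 2^{n(1/s-1/p)k}\|\Delta_k v\|_{L^s}$ for $p\ge s$. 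Splitting the dyadic sum at a free index $2^{K}$, controlling the low frequencies through $\|v\|_{L^q}$ and the high frequencies through $\|\nabla^m v\|_{L^r}$, and then optimizing over $K$ produces precisely the exponent $\theta$ fixed by $\frac1p=\frac jn+\theta(\frac1r-\frac mn)+\frac{1-\theta}{q}$. Alternatively, one may prove the base case $j=0,m=1$ from the representation $v(x)=c_n\int_{\mathbb{R}^n}(x-y)\cdot\nabla v(y)\,|x-y|^{-n}\,dy$ via Hardy--Littlewood--Sobolev and then iterate in the order of derivatives. The admissibility constraint $j/m\le\theta\le1$ is exactly what keeps both Bernstein sums geometrically convergent.

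Next I would transfer the estimate to $\Omega$. Since $\Omega$ is bounded and Lipschitz, Stein's universal extension operator $E$ furnishes a single bounded map $E:W^{k,s}(\Omega)\to W^{k,s}(\mathbb{R}^n)$ for all the indices $k,s$ at play; after multiplication by a fixed cutoff we may assume $Ev$ is supported in a ball $B\supset\bar\Omega$. Applying the full-space inequality to $Ev$ and restricting to $\Omega$ gives $\|\nabla^j v\|_{L^p(\Omega)}\le C\|Ev\|_{W^{m,r}(B)}^{\theta}\|v\|_{L^q(\Omega)}^{1-\theta}$, and the extension bound replaces $\|Ev\|_{W^{m,r}}$ by the full Sobolev norm $\|v\|_{W^{m,r}(\Omega)}$. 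The only discrepancy with the claimed form is that the right-hand side then carries all intermediate seminorms $\|\nabla^i v\|_{L^r}$, $0\le i<m$, in place of $\|\nabla^m v\|_{L^r}$ alone; these I would remove by induction on $m$, bounding each lower-order seminorm through the inductive inequality by $\varepsilon\|\nabla^m v\|_{L^r}+C_\varepsilon\|v\|_{L^q}$ and using Young's inequality to absorb the $\varepsilon$-part and collect the remainder into the additive term $C_1\|v\|_{L^q}$. The endpoint \eqref{GN-C} follows the same scheme with $p=\infty$: the full-space bound is replaced by Morrey's embedding $W^{1,r}\hookrightarrow C$ (valid since $r>n=3$) interpolated against $L^q$, and a direct computation confirms the resulting exponent equals $\theta=3r/(3r+q(r-3))$, exactly as stated.

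Finally, the improvement $C_1=C_2=0$ under $u\cdot n|_{\partial\Omega}=0$ or $\int_\Omega u\,dx=0$ is where the additive term is discarded. Under either hypothesis the Poincar\'e inequality stated above (together with Sobolev embedding and its iterates) upgrades the homogeneous seminorm into an equivalent norm, so that $\|u\|_{L^q}\le C\|\nabla^m u\|_{L^r}$; then $C_1\|u\|_{L^q}=C_1\|u\|_{L^q}^{\theta}\|u\|_{L^q}^{1-\theta}\le C\|\nabla^m u\|_{L^r}^{\theta}\|u\|_{L^q}^{1-\theta}$ may be merged into the multiplicative term, and the identical remark disposes of $C_2$. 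The main obstacle I anticipate is not any isolated inequality but the bookkeeping forced by the mere Lipschitz regularity of $\partial\Omega$: one must invoke Stein's universal (rather than smooth-domain) extension and verify it acts simultaneously on every Lebesgue/Sobolev scale appearing in the scaling relation, propagate the mean/normal-trace condition correctly through the iteration used for $C_1=C_2=0$, and treat the borderline exponents ($r=\infty$, $\theta=j/m$, or integer scaling producing logarithmic losses) separately, since there the frequency-splitting step degenerates.
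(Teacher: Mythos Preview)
The paper does not prove this lemma; it is quoted as the classical Gagliardo--Nirenberg inequality with a bare citation, so there is no in-paper argument to compare against. Your route to the main inequalities \eqref{GN-p} and \eqref{GN-C} --- the scale-invariant estimate on $\mathbb{R}^n$ via Littlewood--Paley (or Riesz-potential) methods, transfer to $\Omega$ by Stein's universal extension, and elimination of the intermediate seminorms through an Ehrling-type iteration --- is the standard one and is correct.

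Your argument for the final clause, however, has a real gap. The bound $\|u\|_{L^q}\le C\|\nabla^m u\|_{L^r}$ you invoke does not follow from Poincar\'e plus Sobolev embedding in the generality needed: already for $m=1$ it fails whenever $q$ exceeds the Sobolev exponent of $r$, and for $m\ge 2$ the hypothesis $u\cdot n|_{\partial\Omega}=0$ or $\int_\Omega u\,dx=0$ does not propagate to $\nabla u$, so your iteration cannot be carried out. In fact the conclusion $C_1=0$ is false as stated once $m\ge 2$ --- on the unit ball the field $u=(-x_2,x_1,0)$ satisfies $u\cdot n=0$, has $\nabla^2 u=0$, yet $\nabla u\neq 0$ --- so this gap is partly inherited from the lemma's formulation. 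For $m=1$ the correct route to $C_1=C_2=0$ is a compactness--contradiction argument (normalize, extract a limit via Rellich, and note the limit is a constant excluded by either hypothesis), not the direct domination you sketch.
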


We also frequently use the following $L^p$-estimates for two elliptic systems, whose proof is a direct consequence of a well-known elliptic theory due to Agmon, Douglis, and Nirenberg \cite{agmon1964estimates}. One is the Neumann problem
\begin{lemma}\label{Nuemann_elliptic}
   Let $\Omega$ be a bounded domain with $C^{k+2}$ boundary. Given $f\in H^k(\Omega)$, the solution to the following Neumann boundary problem of the elliptic equation
   \begin{equation}
       \begin{cases}
           \Delta v=f,&\text{ in }\Omega,\\
           \frac{\partial v}{\partial n}=0,&\text{ on }\partial\Omega
       \end{cases}
   \end{equation}
   satisfies the following elliptic estimates
   \begin{equation}
       \|\nabla v\|_{H^{k+1}}\leq C_3\|f\|_{H^k}.
   \end{equation}
\end{lemma}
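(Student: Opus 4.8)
The plan is to treat this as the standard $L^2$-regularity theorem for the Neumann Laplacian, split into three stages: solvability, the Agmon--Douglis--Nirenberg a priori estimate, and absorption of the lower-order term. First I would record the solvability. Integrating $\Delta v=f$ over $\Omega$ and using $\partial v/\partial n=0$ forces the compatibility condition $\int_\Omega f\,dx=0$, which I assume (it holds in every application in this paper, where $f$ is a divergence or has zero average). Working on the zero-mean subspace of $H^1(\Omega)$, the bilinear form $a(v,w)=\int_\Omega\nabla v\cdot\nabla w$ is coercive by the Poincar\'e--Wirtinger inequality, so Lax--Milgram produces a weak solution $v$, unique up to an additive constant. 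Since the asserted estimate bounds only $\nabla v$, which is insensitive to this constant, it suffices to prove it for the zero-mean representative.

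Second, I would establish the interior-plus-boundary a priori estimate
\[
\|v\|_{H^{k+2}(\Omega)}\le C\big(\|f\|_{H^k(\Omega)}+\|v\|_{L^2(\Omega)}\big).
\]
Using a finite partition of unity subordinate to a cover of $\overline\Omega$, the interior pieces are handled by classical interior elliptic regularity for $\Delta$ (difference quotients in all directions, bootstrapping from $H^1$ up to $H^{k+2}$). For the boundary pieces I would flatten $\partial\Omega$ by a $C^{k+2}$ diffeomorphism; in these coordinates $\Delta$ becomes a second-order elliptic operator with $C^{k+1}$ coefficients and the condition $\partial v/\partial n=0$ becomes an oblique first-order boundary condition, which in the flat half-space model is exactly the homogeneous Neumann condition $\partial_{x_n}v=0$. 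For that model, even reflection across $\{x_n=0\}$ turns $v$ into an $H^{k+2}$ solution of $\Delta\tilde v=\tilde f$ on a full ball, to which the interior estimate applies; equivalently, one takes tangential difference quotients and then recovers $\partial_{x_nx_n}v=f-\sum_{j<n}\partial_{x_jx_j}v$ directly from the equation. Summing over the cover gives the displayed inequality. This boundary step is precisely where the ADN complementing (Lopatinski--Shapiro) condition for the pair $(\Delta,\partial_n)$ enters, and it is the main technical obstacle, the interior part being entirely routine.

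Finally, I would absorb the lower-order term $\|v\|_{L^2}$. Testing the weak formulation with $v$ itself gives $\|\nabla v\|_{L^2}^2=-\int_\Omega fv\,dx\le\|f\|_{L^2}\|v\|_{L^2}$, while Poincar\'e--Wirtinger applied to the zero-mean $v$ gives $\|v\|_{L^2}\le C\|\nabla v\|_{L^2}$; combining these yields $\|\nabla v\|_{L^2}\le C\|f\|_{L^2}$ and hence $\|v\|_{L^2}\le C\|f\|_{L^2}\le C\|f\|_{H^k}$. Inserting this into the a priori estimate removes the dependence on $\|v\|_{L^2}$ and produces $\|v\|_{H^{k+2}}\le C_3\|f\|_{H^k}$, from which $\|\nabla v\|_{H^{k+1}}\le\|v\|_{H^{k+2}}\le C_3\|f\|_{H^k}$ follows at once, completing the proof. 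I note that, once solvability and the energy bound are in hand, the second stage is indeed ``a direct consequence'' of the estimates in \cite{agmon1964estimates}, so in the write-up it may simply be cited rather than reproved.
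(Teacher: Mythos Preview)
Your proposal is correct and, in fact, supplies considerably more detail than the paper does: the paper does not prove this lemma at all but simply records it as ``a direct consequence of a well-known elliptic theory due to Agmon, Douglis, and Nirenberg \cite{agmon1964estimates}.'' Your three-stage outline (solvability on the mean-zero subspace, the ADN boundary/interior a priori estimate, and absorption of the lower-order $\|v\|_{L^2}$ via Poincar\'e--Wirtinger) is exactly the standard route one would take to unpack that citation, and your closing remark that the second stage ``may simply be cited rather than reproved'' is precisely what the authors chose to do.
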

The other is the slip boundary problem
\begin{equation}\label{slip}
       \begin{cases}
        \Delta u=f,&\text{ in }\Omega,\\
		u\cdot n = 0,\ \mathrm{curl}u\times n=0,&\text{ on }\partial\Omega.
       \end{cases}
\end{equation}
\begin{lemma}\label{slip_estimate}
Let $u$ be a smooth solution of the elliptic system \eqref{slip}, then for $p\in(1,\infty)$, $k\geq 0$, there exists a positive constant $C$ depending only on $\lambda$, $\mu$, $p$ and $k$ such that
\begin{equation}
\|\nabla^{k+2}u\|_{L^p(B_R)}\leq C\|f\|_{W^{k,p}(B_R)}.
\end{equation}
\end{lemma}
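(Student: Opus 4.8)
The plan is to recognize the pair of boundary operators $B_1 u = u\cdot n$ and $B_2 u = \mathrm{curl}\,u\times n$ as a system of boundary conditions that is \emph{complementing} (in the sense of Agmon--Douglis--Nirenberg) for the vector Laplacian, and then to invoke the ADN a priori estimates \cite{agmon1964estimates}. Since those estimates are local, the genuine content of the lemma is local, which is exactly why it is phrased on balls: one covers $\bar\Omega$ by interior balls $B_R\subset\subset\Omega$, on which the ordinary interior Calder\'on--Zygmund estimate $\|\nabla^{k+2}u\|_{L^p(B_{R/2})}\le C(\|f\|_{W^{k,p}(B_R)}+\|u\|_{L^p(B_R)})$ applies, together with boundary half-balls $B_R\cap\Omega$, on which one flattens $\partial\Omega$ and applies the ADN boundary estimate for the slip operator; a partition of unity then patches the local bounds into a global one. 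Throughout I would organize the computation around the identity $-\Delta u=\mathrm{curl}\,\mathrm{curl}\,u-\nabla\,\mathrm{div}\,u$, which makes the roles of the two boundary conditions transparent.

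The heart of the argument is the verification of the complementing (Lopatinski--Shapiro) condition near the boundary. After flattening, take the half-space $\{x_3>0\}$ with outward normal $n=-e_3$. Then $B_1u=0$ reads $u_3=0$ on $\{x_3=0\}$, and a direct computation gives $\mathrm{curl}\,u\times n=(-\omega_2,\omega_1,0)$ with $\omega=\mathrm{curl}\,u$, so $B_2u=0$ forces $\omega_1=\partial_2 u_3-\partial_3 u_2=0$ and $\omega_2=\partial_3 u_1-\partial_1 u_3=0$. On the flat boundary the tangential derivatives $\partial_1 u_3,\partial_2 u_3$ vanish because $u_3\equiv 0$ there, so the slip conditions collapse to
\begin{equation*}
u_3=0,\qquad \partial_3 u_1=0,\qquad \partial_3 u_2=0\quad\text{on }\{x_3=0\}.
\end{equation*}
This is a Dirichlet condition on the normal component together with homogeneous Neumann conditions on the two tangential components, which decouple the system into three scalar Laplace problems, each carrying a boundary operator known to be complementing for $\Delta$. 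Hence \eqref{slip} is an elliptic boundary value problem in the ADN sense, and the ADN theorem yields the local estimate $\|u\|_{W^{k+2,p}}\le C(\|f\|_{W^{k,p}}+\|u\|_{L^p})$ with $C=C(\lambda,\mu,p,k,\Omega)$. (The dependence on $\lambda,\mu$ enters because in the application the relevant operator is the viscous stress $\mu\Delta u+(\mu+\lambda)\nabla\,\mathrm{div}\,u$, which reduces to $\mu\Delta u$ on divergence-free fields.)

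It then remains to absorb the lower-order term $\|u\|_{L^p}$. Pairing $-\Delta u=\mathrm{curl}\,\mathrm{curl}\,u-\nabla\,\mathrm{div}\,u$ with $u$ and integrating by parts, both boundary integrals vanish under the slip conditions, since the curl term contributes $-\int_{\partial\Omega}(\mathrm{curl}\,u\times n)\cdot u=0$ and the divergence term contributes $\int_{\partial\Omega}(\mathrm{div}\,u)(u\cdot n)=0$, giving
\begin{equation*}
\int_\Omega(-\Delta u)\cdot u\,\mathrm{d}x=\int_\Omega|\mathrm{curl}\,u|^2\,\mathrm{d}x+\int_\Omega|\mathrm{div}\,u|^2\,\mathrm{d}x.
\end{equation*}
Thus any solution of the homogeneous problem $\Delta u=0$ is curl- and divergence-free with $u\cdot n=0$, i.e. a harmonic Neumann field; whenever this finite-dimensional kernel is trivial, the lower-order term is removed by the standard compactness (Lions--Peetre type) argument, yielding $\|\nabla^{k+2}u\|_{L^p}\le C\|f\|_{W^{k,p}}$. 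The step deserving the most care, and the main obstacle, is precisely this pair of points: confirming that the flattening perturbs only lower-order coefficients and so does not spoil the complementing condition, and handling the harmonic-field kernel correctly for the class of domains at hand so that the clean right-hand side in the stated estimate is legitimate.
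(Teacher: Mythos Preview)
Your approach is correct and is precisely what the paper has in mind: the paper does not supply a proof of this lemma at all, but simply declares it ``a direct consequence of a well-known elliptic theory due to Agmon, Douglis, and Nirenberg \cite{agmon1964estimates}'' and moves on. Your verification of the complementing condition via flattening---reducing the slip conditions to a Dirichlet condition on the normal component and Neumann conditions on the tangential components---is the standard way to justify that citation, and your discussion of absorbing the lower-order $\|u\|_{L^p}$ term via the triviality of the harmonic Neumann field kernel is exactly the right closing step (and is trivially satisfied on a ball, which is simply connected).

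One minor remark: the appearance of $\lambda,\mu$ in the constant and of $B_R$ in place of $\Omega$ in the lemma statement are almost certainly artifacts of the paper having adapted the statement from an earlier Lam\'e-system source; in the actual application (to $w=\nabla d$ in \eqref{dd_ellip}) the operator is just $-\lambda\Delta$ on the fixed bounded domain $\Omega$, so you need not labor to explain the $\lambda,\mu$ dependence through a viscous-stress reduction---that part of your proposal is over-thinking a typo.
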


\section{A priori estimates}

For any fixed time $T>0$, $(\rho,u,P, d)$ is the unique local strong solution to \eqref{ins}-\eqref{bc} on $\Omega\times (0,T]$ with initial data $(\rho_0,u_0, d_0)$ satisfying \eqref{ia}, which is guaranteed by Lemma 
\ref{local}.

Define
\begin{gather}\label{As1}
    \mathcal{E}_d(T)\triangleq\sup_{t\in[0,T] }\|\nabla 
    d \|_{L^3}^2,\\
    \label{As2}\mathcal{E}_u(T) \triangleq \sup_{t\in[0,T] } \bar\rho^{\alpha}\|\nabla u \|_{L^2}^2 
    +\int_0^{T}  \|\sqrt{\rho} u_t \|_{L^2}^2\,dt,\\
    \label{As3}\mathcal{E}_\rho(T) \triangleq \sup_{t\in[0,T] }\|\nabla \rho \|_{L^q}.
\end{gather}

We have the following key proposition.

\begin{proposition}\label{pr}
Under the conditions of Theorem \ref{global}, there exist  positive constants  $\Lambda_0, \varepsilon_0$ depending on $\Omega,C_0,\mu,\nu, \lambda,  \alpha$, and $\|\nabla\rho_0\|_{L^q}, \| u_0\|_{H^1}, \|\nabla d_0\|_{H^1}$ such that if $(\rho,u,P,d)$ is a smooth solution to the problem (\ref{ins})--(\ref{bc}) on $\Omega\times (0,T]$ satisfying
    \begin{equation}\label{a1}
        \mathcal{E}_d(T)\leq 2\delta,
		~\mathcal{E}_u(T) \le 3\mathcal{E}_u(0),
        ~\mathcal{E}_\rho(T) \le 3\mathcal{E}_\rho(0),
    \end{equation}
for some $\delta<1$ sufficiently small (defined in \eqref{delta}), then the following estimates hold:
    \begin{equation}
        \mathcal{E}_d(T)\le \delta,
		~\mathcal{E}_u(T) \le 2\mathcal{E}_u(0),
        ~\mathcal{E}_\rho(T) \le 2\mathcal{E}_\rho(0),
    \end{equation}
	provided 
    \begin{equation}
		\bar\rho\ge \Lambda_0,\quad \|\nabla d_0\|_{L^3}\leq  \varepsilon_0.
    \end{equation}     
\end{proposition}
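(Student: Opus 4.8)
The strategy is a standard continuation/bootstrap argument: assume the three quantities $\mathcal{E}_d(T),\mathcal{E}_u(T),\mathcal{E}_\rho(T)$ obey the weak bounds \eqref{a1}, and improve each by a factor of two, thereby closing the argument by a continuity-in-$T$ criterion. The three estimates must be proved in an interlocking order since each feeds into the next. First I would establish the basic energy identity: testing the momentum equation by $u$ and the director equation by $-\Delta d-|\nabla d|^2 d$ (using $|d|=1$), one gets a dissipation inequality controlling $\int_0^T\mu(\rho)\|\nabla u\|_{L^2}^2$ and $\int_0^T\|\Delta d+|\nabla d|^2d\|_{L^2}^2$; because the domain is bounded and the viscosity $\mu(\rho)=\mu\rho^\alpha\ge\mu\bar\rho^\alpha$ is huge, Poincar\'e (Lemma 1.3) yields an exponential-in-time decay for $\|\nabla u\|_{L^2}$ and for $\|\nabla d\|_{L^2}$, with rate proportional to $\bar\rho^\alpha$. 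The key point is that the coupling term $-\nu\,\mathrm{div}(\nabla d\odot\nabla d)$, when tested against $u$, produces $\nu\int\nabla d\odot\nabla d:\nabla u$, which is $\le C\|\nabla d\|_{L^3}\|\nabla d\|_{L^6}\|\nabla u\|_{L^2}$; the smallness of $\mathcal{E}_d(T)\le 2\delta$ lets this be absorbed partly into the director dissipation and partly into the velocity dissipation (which is large because $\mu(\rho)$ is large).

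Next comes the $\mathcal{E}_u$ estimate, which is the heart of the matter. Following Hoff's time-weighted scheme, I would multiply the momentum equation by $u_t$ (and, in a second layer, by $\sigma(t) u_t$ with a suitable weight $\sigma$) and integrate. This produces $\frac{d}{dt}\int \mu(\rho)|\nabla u|^2$-type terms and $\int\rho|u_t|^2$, against which one must control: (i) the convection term $\int \rho u\cdot\nabla u\cdot u_t$, handled by Gagliardo--Nirenberg (Lemma \ref{G-N}) together with the smallness of $\|\nabla u\|_{L^2}$ coming from \eqref{a1}; (ii) the term $\int \mathrm{div}(\nabla d\odot\nabla d)\cdot u_t$, which after integration by parts and use of the $d$-equation becomes expressible through $d_t$, $\Delta d$, $|\nabla d|^2$, and must be bounded using $\|\nabla d\|_{L^\infty_tL^3_x}\le\sqrt{2\delta}$ and the higher director norms; and (iii) the pressure term, which is eliminated by the divergence-free condition and the Bogovskii operator (Lemma \ref{bovosgii}) when needed for the $L^q$ estimates. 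The elliptic regularity for the Stokes-type system (Lemmas \ref{Nuemann_elliptic}, \ref{slip_estimate}) is invoked to control $\|\nabla^2 u\|_{L^2}$ and $\|\nabla P\|_{L^2}$ by $\|\rho u_t\|_{L^2}$, $\|\rho u\cdot\nabla u\|_{L^2}$ and $\|\mathrm{div}(\nabla d\odot\nabla d)\|_{L^2}$. Crucially, every "bad" term either carries a factor of $\|\nabla u\|_{L^2}$ or $\|\nabla d\|_{L^3}$ that is small by \eqref{a1}, or carries a negative power of $\bar\rho$ after dividing through by $\mu(\rho)\ge\mu\bar\rho^\alpha$ and using $\alpha>1$ — this is exactly where the largeness of $\bar\rho$ (hence smallness of the Reynolds number) is used to defeat the supercritical nonlinearity. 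In parallel, one needs $L^2$-type energy estimates for $\nabla d$ and $\nabla^2 d$ (testing the $d$-equation by $-\Delta^2 d$ or $\Delta d_t$), yielding control of $\|\nabla d\|_{H^1}$ and its dissipation; then Gagliardo--Nirenberg interpolation $\|\nabla d\|_{L^3}\le C\|\nabla d\|_{L^2}^{1/2}\|\nabla d\|_{H^1}^{1/2}$ (or $\|\nabla^2 d\|_{L^2}^{1/2}\|\nabla d\|_{L^2}^{1/2}$) closes the bound $\mathcal{E}_d(T)\le\delta$ provided $\|\nabla d_0\|_{L^3}\le\varepsilon_0$ and $\bar\rho\ge\Lambda_0$, since the dissipation of $d$ dominates once the velocity is controlled.

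Finally, $\mathcal{E}_\rho(T)\le 2\mathcal{E}_\rho(0)$ follows by differentiating the transport equation $\rho_t+u\cdot\nabla\rho=0$, giving $\frac{d}{dt}\|\nabla\rho\|_{L^q}\le C\|\nabla u\|_{L^\infty}\|\nabla\rho\|_{L^q}$, and then integrating; this requires $\int_0^T\|\nabla u\|_{L^\infty}\,dt$ to be small, which is obtained by interpolating $\|\nabla u\|_{L^\infty}$ between $\|\nabla u\|_{L^2}$ and $\|\nabla^2 u\|_{L^q}$ (or via \eqref{GN-C}), using the $L^2$-in-time control of $\|\nabla u\|_{W^{1,q}}$ from the solution class together with the exponential decay in time and the small factor $\bar\rho^{-\alpha/2}$ — so the time integral is $\lesssim \bar\rho^{-c}$ for some $c>0$, hence $\le\log 2$ for $\bar\rho\ge\Lambda_0$. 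I expect the main obstacle to be step (ii) in the $\mathcal{E}_u$ estimate: controlling $\int\mathrm{div}(\nabla d\odot\nabla d)\cdot u_t$ and the accompanying highest-order director terms without losing a derivative, which forces one to carefully pair the smallness of $\|\nabla d\|_{L^\infty_tL^3_x}$ against the director dissipation $\|\nabla^2 d\|_{L^2_{t,x}}$ and to track exactly how the powers of $\bar\rho$ enter — getting a genuinely time-independent bound (not merely one that degrades as $T\to\infty$) is the delicate point, and it is precisely what the exponential decay of $\|\nabla u\|_{L^2}$ and $\|\nabla d\|_{L^2}$ in the bounded domain is there to supply.
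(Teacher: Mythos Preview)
Your plan matches the paper's approach: bootstrap the three quantities, close $\mathcal{E}_d$ by interpolating between $\|\nabla d\|_{L^2}$ and $\|\nabla^2 d\|_{L^2}$, close $\mathcal{E}_u$ by testing the momentum equation against $u_t$ with time weights, and close $\mathcal{E}_\rho$ by bounding $\int_0^T\|\nabla u\|_{L^\infty}\,dt$. Two points deserve sharpening. First, the paper does not actually implement exponential decay; it uses algebraic time-weights $t$ and $t^2$, proving coupled estimates on $t^i(\|\sqrt\rho u_t\|_{L^2}^2+\|\nabla d_t\|_{L^2}^2)$ and $\int_0^T t^i(\bar\rho^\alpha\|\nabla u_t\|_{L^2}^2+\|\nabla^2 d_t\|_{L^2}^2)\,dt$ for $i=1,2$ by differentiating both the momentum and director equations in $t$---the $d_t$ estimate is indispensable because the term $\int\mathrm{div}(\nabla d\odot\nabla d)\cdot u_t$, after pulling out the time derivative, produces $\|\nabla d_t\|$ contributions that cannot be closed otherwise, and it is the $t^2$-weighted bound that makes $\int_1^T\|\rho u_t\|_{L^q}\,dt$ finite uniformly in $T$ (one splits the time integral at $t=1$). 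Second, the Stokes regularity you need is not the constant-coefficient elliptic lemmas you cite but a density-dependent version for $-\mathrm{div}(2\mu\rho^\alpha D(u))+\nabla P=F$; its proof treats $\nabla(\rho^\alpha)$ perturbatively using the a priori bound $\mathcal{E}_\rho(T)\le 3\mathcal{E}_\rho(0)$, and this is where $\|\nabla\rho_0\|_{L^q}$ enters the constants and where the $W^{2,q}$ estimate on $u$ (hence the bound on $\|\nabla u\|_{L^\infty}$) comes from.
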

First, as the density satisfies the transport equation \eqref{ins}$_1$ and making use of \eqref{ins}$_4$, one has the following lemma
\begin{lemma}
It holds that 
	\begin{equation}\label{2.3}
		\bar \rho \le \rho \le C_0 \bar \rho,\quad (x,t)\in \Omega \times [0,T].
	\end{equation}
\end{lemma}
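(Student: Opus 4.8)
The plan is to read off the pointwise bounds on $\rho$ directly from the transport structure of the continuity equation $\rho_t+\mathrm{div}(\rho u)=0$ together with the incompressibility constraint $\mathrm{div}\,u=0$. First I would observe that, since $\mathrm{div}\,u=0$, the continuity equation reduces to the pure transport equation $\rho_t+u\cdot\nabla\rho=0$, so $\rho$ is constant along the flow map (particle trajectories) of $u$. Concretely, let $X(t,x)$ solve the ODE $\partial_t X(t,x)=u(t,X(t,x))$ with $X(0,x)=x$; because $u\in L^2(0,T;H_0^1)\cap\dots$ is smooth enough for the local strong solution (by Lemma \ref{local}), the flow map is well-defined, and the homogeneous Dirichlet condition $u=0$ on $\partial\Omega$ guarantees that $X(t,\cdot)$ maps $\Omega$ onto $\Omega$ (no mass leaves the domain). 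Then $\rho(t,X(t,x))=\rho_0(x)$ for all $t\in[0,T]$, which is the key identity.

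From this identity the conclusion is immediate: for any $(t,y)\in\Omega\times[0,T]$, writing $y=X(t,x)$ for the unique $x\in\Omega$, we get $\rho(t,y)=\rho_0(x)$, and the initial hypothesis \eqref{ia} gives $\bar\rho\le\rho_0(x)\le C_0\bar\rho$. Hence $\bar\rho\le\rho(t,y)\le C_0\bar\rho$ throughout $\Omega\times[0,T]$, which is exactly \eqref{2.3}. Equivalently, and perhaps cleaner to write, one can argue by the maximum principle for the transport equation: for any $p\in[1,\infty)$ one multiplies $\rho_t+u\cdot\nabla\rho=0$ by $p\,\rho^{p-1}$ (or by $p(\rho-\bar\rho)_-^{p-1}$ to get the lower bound and $p(\rho-C_0\bar\rho)_+^{p-1}$ for the upper bound), integrates over $\Omega$, uses $\mathrm{div}\,u=0$ and $u\cdot n=0$ on $\partial\Omega$ to kill the convective term after integration by parts, and concludes that $\|\rho(t)\|_{L^p}$, $\|(\bar\rho-\rho(t))_+\|_{L^p}$, $\|(\rho(t)-C_0\bar\rho)_+\|_{L^p}$ are all constant in time; letting $p\to\infty$ and invoking \eqref{ia} at $t=0$ yields \eqref{2.3}.

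There is essentially no obstacle here — this is a standard consequence of transport by a divergence-free, tangent-to-the-boundary velocity field, and the only point requiring a word of care is the regularity needed to justify the flow map / the integration by parts, which is supplied by the local strong solution class \eqref{l-r} of Lemma \ref{local}. I would present the characteristics argument as the main line and mention the renormalized/maximum-principle version as the alternative that avoids discussing the flow map explicitly.
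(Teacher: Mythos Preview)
Your proposal is correct and follows the same idea the paper invokes: the paper simply states that the lemma holds because $\rho$ satisfies the transport equation \eqref{ins}$_1$ together with the divergence-free condition \eqref{ins}$_4$, without writing out any details. Your characteristics argument (and the alternative $L^p$/maximum-principle version) is exactly the standard way to substantiate that one-line remark, so there is nothing to add or correct.
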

Next, the basic energy inequality of the system \eqref{ins} reads
\begin{lemma}
    It holds that
    \begin{align}\label{basic-est-0}
		&\sup_{0\le t\le T} \left(\bar \rho \| u\|_{L^2}^2+\|\nabla d\|_{L^2}^2\right)   +  \int_{0}^{T} \bar\rho^{\alpha}\|\nabla u\|_{L^2}^2+\|\Delta d+|\nabla d|^2d\|_{L^2}^2 dt \nonumber\\
        &\le C(\bar\rho\|u_0\|_{L^2}^2+\|\nabla d_0\|_{L^2}^2) \le C \bar \rho,
	\end{align}
 where $C$ depends on $\|u_0\|_{L^2},\|\nabla d_0\|_{L^2}$.
 
    Furthermore, under the assumption $\mathcal{E}_u(T) \le 3\mathcal{E}_u(0), \mathcal{E}_d(T)\le 2\delta$ with $\delta$ sufficiently small, we have
    %\eqref{a2} with $\delta$ sufficiently small, we have
	% \begin{equation}\label{basic-est}
	% 	\sup_{0\le t\le T} \left(\bar \rho \| u\|_{L^2}^2+\|\nabla d\|_{L^2}^2\right)   +  \int_{0}^{T} \bar\rho^{\alpha}\|\nabla u\|_{L^2}^2+\|\nabla^2 d\|_{L^2}^2 dt \le C(\bar\rho+\delta).
	% \end{equation}
    \begin{equation}\label{basic-est}
		\sup_{0\le t\le T} \|\nabla d\|_{L^2}^2  +  \int_{0}^{T} \|\nabla^2 d\|_{L^2}^2 dt \le c_2\|\nabla d_0\|_{L^3}^2,
    \end{equation}
    where $c_2$ depends on $\|u_0\|_{H^1},\|\nabla d_0\|_{L^2}$.
\end{lemma}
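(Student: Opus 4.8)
The plan is to derive two estimates. The first, \eqref{basic-est-0}, is the classical basic energy law obtained by testing the momentum equation with $u$ and the director equation with $-(\Delta d+|\nabla d|^2 d)$, then adding. Multiplying \eqref{ins}$_2$ by $u$, integrating over $\Omega$, using $\mathrm{div}\,u=0$, the boundary condition $u|_{\partial\Omega}=0$, and the identity $\int (\rho u)_t\cdot u+\mathrm{div}(\rho u\otimes u)\cdot u\,dx=\frac{d}{dt}\int\frac12\rho|u|^2dx$ (which follows from \eqref{ins}$_1$), one gets $\frac{d}{dt}\int\frac12\rho|u|^2dx+\int 2\mu\rho^\alpha|\mathcal{D}(u)|^2dx=-\nu\int\mathrm{div}(\nabla d\odot\nabla d)\cdot u\,dx$. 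For the right-hand side, integrate by parts to write it as $\nu\int(\nabla d\odot\nabla d):\nabla u\,dx$, and observe that testing \eqref{ins}$_3$ with $-\nu(\Delta d+|\nabla d|^2d)$ and using $u\cdot\nabla d$ against $\Delta d$ together with $\frac{d}{dt}\int\frac\nu2|\nabla d|^2dx=\nu\int\nabla d:\nabla d_t\,dx=-\nu\int\Delta d\cdot d_t\,dx$ (Neumann condition kills the boundary term) produces exactly the cancelling transport term; here one uses $d\cdot d_t=0$ from $|d|=1$ and the standard fact $\int(u\cdot\nabla d)\cdot\Delta d\,dx=-\int(\nabla d\odot\nabla d):\nabla u\,dx$ when $\mathrm{div}\,u=0$. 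Adding the two identities and using Korn's inequality ($\|\nabla u\|_{L^2}\le C\|\mathcal{D}(u)\|_{L^2}$ for $u\in H^1_0$) to absorb $\int\rho^\alpha|\mathcal{D}(u)|^2$ from below by $c\bar\rho^\alpha\|\nabla u\|_{L^2}^2$, then integrating in time, yields \eqref{basic-est-0}; the final bound by $C\bar\rho$ uses $\bar\rho>1$ and $\|u_0\|_{L^2}^2\le C\|u_0\|_{H^1}^2$.

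For the refined estimate \eqref{basic-est}, the plan is to perform an $H^1$-type estimate for $d$: multiply \eqref{ins}$_3$ by $-\Delta d$ and integrate. This gives $\frac12\frac{d}{dt}\|\nabla d\|_{L^2}^2+\|\Delta d\|_{L^2}^2=\int(u\cdot\nabla d)\cdot\Delta d\,dx-\lambda\int|\nabla d|^2 d\cdot\Delta d\,dx$ (with the Neumann boundary term vanishing — this requires care, see below). The first term is handled by $\int(u\cdot\nabla d)\cdot\Delta d\,dx=-\int(\nabla d\odot\nabla d):\nabla u\,dx$, bounded by $\|\nabla d\|_{L^3}^2\|\nabla u\|_{L^3}$, and using Gagliardo--Nirenberg $\|\nabla u\|_{L^3}\le C\|\nabla u\|_{L^2}^{1/2}\|\nabla u\|_{H^1}^{1/2}$ plus the assumption $\mathcal{E}_u(T)\le 3\mathcal{E}_u(0)$ (so $\bar\rho^\alpha\|\nabla u\|_{L^2}^2$ is controlled) together with $\mathcal{E}_d(T)\le2\delta$. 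The second term: since $|d|=1$ implies $d\cdot\Delta d=-|\nabla d|^2$, we get $-\lambda\int|\nabla d|^2 d\cdot\Delta d\,dx=\lambda\int|\nabla d|^4dx=\lambda\|\nabla d\|_{L^4}^4$, which by Gagliardo--Nirenberg is $\le C\|\nabla d\|_{L^3}^{?}\|\nabla^2 d\|_{L^2}^{?}\|\nabla d\|_{L^2}^{?}$ — concretely $\|\nabla d\|_{L^4}^4\le C\|\nabla d\|_{L^3}^3\|\nabla^2 d\|_{L^2}$ (or an interpolation between $L^2$, $L^3$ and $W^{1,2}$), so using smallness of $\|\nabla d\|_{L^3}$ this is absorbed by $\frac12\|\Delta d\|_{L^2}^2$ plus a lower-order term. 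Elliptic regularity with the Neumann condition (Lemma \ref{Nuemann_elliptic}) converts $\|\Delta d\|_{L^2}$ control into $\|\nabla^2 d\|_{L^2}$ control. Finally Grönwall's inequality, together with $\|\nabla d_0\|_{L^2}^2\le|\Omega|^{1/3}\|\nabla d_0\|_{L^3}^2$, closes the estimate with constant $c_2$ depending on $\|u_0\|_{H^1}$ and $\|\nabla d_0\|_{L^2}$.

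The main obstacle is the boundary term in the $H^1$-estimate for $d$: when multiplying $d_t+u\cdot\nabla d=\lambda(\Delta d+|\nabla d|^2 d)$ by $-\Delta d$ and integrating by parts in $\int\Delta d\cdot\Delta d$ versus $\int\nabla d_t:\nabla d$, the Neumann condition $\partial d/\partial n=0$ does \emph{not} by itself kill $\int_{\partial\Omega}\partial_n d\cdot$ (something); one must instead integrate by parts the other way or use that $\int\nabla d_t:\nabla d=\frac12\frac{d}{dt}\|\nabla d\|_{L^2}^2$ directly (no boundary term) and treat $\int\Delta d\cdot\Delta d$ as is, while the term $\int(u\cdot\nabla d)\cdot\Delta d$ and $\int|\nabla d|^2d\cdot\Delta d$ are integrated over the interior without parts. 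So the estimate should be organized as a direct $L^2$ bound of $\Delta d$ in terms of the equation: $\|\Delta d\|_{L^2}\le\frac1\lambda\|d_t+u\cdot\nabla d\|_{L^2}+\|\nabla d\|_{L^4}^2$, then pair with the energy identity $\frac12\frac{d}{dt}\|\nabla d\|_{L^2}^2+\lambda\|\nabla d\|_{L^2}^2 \le \dots$ coming from testing with $d_t - $ harmonic-map-type quantity as in \eqref{basic-est-0}. Reconciling these two without creating uncontrollable boundary contributions — exploiting that the Neumann condition makes $\nabla^2 d$ well-defined via Lemma \ref{Nuemann_elliptic} applied to $\Delta d = \lambda^{-1}(d_t+u\cdot\nabla d)-|\nabla d|^2d$ — is the delicate point, and the smallness of $\|\nabla d\|_{L^3}$ (hence of $\|\nabla d\|_{L^4}^4$ relative to $\|\nabla^2 d\|_{L^2}^2$) is precisely what makes the absorption work.
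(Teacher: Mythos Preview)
Your argument for \eqref{basic-est-0} is correct and matches the paper: test the momentum equation with $u$, the director equation with $-(\Delta d+|\nabla d|^2 d)$, and the transport terms cancel.

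For \eqref{basic-est}, however, your treatment of the convective term has a genuine gap. You write
\[
\int (u\cdot\nabla d)\cdot\Delta d\,dx=-\int(\nabla d\odot\nabla d):\nabla u\,dx
\le \|\nabla d\|_{L^3}^2\|\nabla u\|_{L^3}
\le C\|\nabla d\|_{L^3}^2\|\nabla u\|_{L^2}^{1/2}\|\nabla u\|_{H^1}^{1/2},
\]
and appeal to $\mathcal{E}_u(T)\le 3\mathcal{E}_u(0)$. But $\mathcal{E}_u$ only controls $\bar\rho^\alpha\|\nabla u\|_{L^2}^2$ and $\int_0^T\|\sqrt\rho u_t\|_{L^2}^2\,dt$; it gives no bound on $\|\nabla u\|_{H^1}=\|\nabla u\|_{L^2}+\|\nabla^2 u\|_{L^2}$. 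The $H^2$ control of $u$ comes only from the Stokes estimate (Lemma~\ref{stokes-e} and \eqref{H2}), which in turn involves $\|\nabla^3 d\|_{L^2}$ --- a quantity you certainly do not have at this stage. There is also nothing on the left-hand side into which a factor of $\|\nabla u\|_{H^1}$ can be absorbed by Young's inequality. So this route does not close.

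The paper's fix is simple: use the $2$--$3$--$6$ H\"older split instead of $3$--$3$--$3$, namely
\[
\int|\nabla u||\nabla d|^2\,dx\le C\|\nabla u\|_{L^2}\|\nabla d\|_{L^3}\|\nabla d\|_{L^6}
\le C\|\nabla u\|_{L^2}\|\nabla d\|_{L^2}^{1/2}\|\nabla^2 d\|_{L^2}^{3/2},
\]
interpolating on $d$ rather than on $u$ (via $\|\nabla d\|_{L^3}\le C\|\nabla d\|_{L^2}^{1/2}\|\nabla^2 d\|_{L^2}^{1/2}$ and $\|\nabla d\|_{L^6}\le C\|\nabla^2 d\|_{L^2}$, valid since $\partial_n d=0$). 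Young's inequality then absorbs $\|\nabla^2 d\|_{L^2}^2$ into the left and leaves $C\|\nabla u\|_{L^2}^4\|\nabla d\|_{L^2}^2$, and Gr\"onwall closes because $\int_0^T\|\nabla u\|_{L^2}^4\,dt\le \sup\|\nabla u\|_{L^2}^2\int_0^T\|\nabla u\|_{L^2}^2\,dt\le C\bar\rho^{1-2\alpha}$ follows from $\mathcal{E}_u(T)\le 3\mathcal{E}_u(0)$ together with \eqref{basic-est-0}. Your interpolation exponents for $\|\nabla d\|_{L^4}^4$ are also slightly off: the correct bound is $\|\nabla d\|_{L^4}^4\le C\|\nabla d\|_{L^3}^2\|\nabla^2 d\|_{L^2}^2$, which is exactly what is needed for absorption once $\|\nabla d\|_{L^3}^2\le 2\delta$.

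Finally, your ``main obstacle'' is not one: when you multiply by $-\Delta d$ and integrate $-\int d_t\cdot\Delta d\,dx$ by parts, the boundary term is $-\int_{\partial\Omega}d_t\cdot\partial_n d\,dS$, which vanishes directly by the Neumann condition $\partial_n d=0$. There is no need to reorganize the estimate around this.
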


\begin{proof}
	Multiplying \eqref{ins}$_2$ by $u$ and integrating the resultant equation, we obtain after integration by parts that 
 \begin{equation}\label{d1}
        \frac{1}{2} \frac{d}{dt} \|\sqrt{\rho} u\|_{L^2}^2 + \mu \bar \rho^{\alpha}\|\nabla u\|_{L^2}^2
        \leq \frac{1}{2} \frac{d}{dt} \|\sqrt{\rho} u\|_{L^2}^2+2\mu\int\rho^\alpha|\mathcal{D}(u)|^2=\nu \int\partial_i d\cdot\partial_j d\partial_j u_i dx.
	\end{equation}
 Multiplying \eqref{ins}$_3$ by $- (\Delta d+|\nabla d|^2d)$ and integrating the resultant equation, we obtain from integration by parts and using $|d|=1$ and \eqref{bc}  that 
	\begin{equation}\label{d-1st}
		\begin{aligned}
			&\frac{1}{2} \frac{d}{dt} \|\nabla d\|_{L^2}^2 
   +\lambda \int|\Delta d+|\nabla d|^2d|^2 dx
   =\int u\cdot\nabla d\cdot\Delta d\,dx\\
   =&-\int\partial_j u_i\partial_id\cdot\partial_jd\,dx-\int(u\cdot\nabla)\partial_jd\cdot\partial_j d dx +\int_{\partial\Omega} u\cdot\nabla d\cdot\frac{\partial d}{\partial n}ds\\
   =&-\int\partial_j u_i\partial_id\cdot\partial_jd\,dx.
		\end{aligned}
	\end{equation}
	Adding \eqref{d-1st} multiplied by $\nu$ to \eqref{d1}, integrating over $[0, T]$ and using \eqref{2.3} lead to \eqref{basic-est-0}.
 
 Furthermore, since $|d|=1$ implies $\Delta d\cdot d=-|\nabla d|^2$, one has
 \begin{equation*}
    \int|\Delta d+|\nabla d|^2d|^2\,dx=\int (|\Delta d|^2-|\nabla d|^4)\,dx. 
 \end{equation*}
 % Thus, we obtain from using Lemma \ref{G-N} with $\frac{\partial d}{\partial n}=0$ that
 % \begin{align*}
 %     \|\Delta d\|_{L^2}^2\leq\|\Delta d+|\nabla d|^2d\|_{L^2}^2+\|\nabla d\|_{L^4}^4\leq \|\Delta d+|\nabla d|^2d\|_{L^2}^2+C\|\nabla d\|_{L^3}^2\|\nabla^2 d\|_{L^2}^2
 % \end{align*}
By elliptic estimates Lemma \ref{Nuemann_elliptic} and Sobolev inequality Lemma \ref{G-N}, we have 
\begin{equation}
\begin{aligned}\label{d-ell}
    \|\nabla^2 d\|_{L^2}^2 &\leq C_3^2 \|\Delta d\|_{L^2}^2 \leq C_3^2 \|\Delta d+|\nabla d|^2d\|_{L^2}^2 + C_3^2 \|\nabla d\|_{L^4}^4\\
    &\le C_3^2\|\Delta d+|\nabla d|^2d\|_{L^2}^2 +c_1 \|\nabla d\|_{L^3}^2\|\nabla^2 d\|_{L^2}^2\\
    &\leq C_3^2 \|\Delta d+|\nabla d|^2d\|_{L^2}^2 + 2\delta c_1 \|\nabla^2 d\|_{L^2}^2\\
    &\leq 2C_3^2 \|\Delta d+|\nabla d|^2d\|_{L^2}^2,
\end{aligned}
\end{equation}
after choosing 
$$\delta<\frac{1}{4c_1}.$$
Combining the above result with \eqref{d-1st} gives 
\begin{equation}
    \begin{aligned}
      \frac{d}{dt}\|\nabla d\|_{L^2}^2+\frac{\lambda}{C_3^2}\|\nabla^2 d\|_{L^2}^2
      &\leq C\int|\nabla u||\nabla d|^2 dx
      \leq C\|\nabla u\|_{L^2}\|\nabla d\|_{L^3}\|\nabla d\|_{L^6}\\
      &\leq C\|\nabla u\|_{L^2}\|\nabla d\|_{L^2}^{\frac{1}{2}}\|\nabla^2 d\|_{L^2}^\frac{3}{2}\\
      &\leq \frac{\lambda}{2C_3^2}\|\nabla^2 d\|_{L^2}^2+C\|\nabla u\|_{L^2}^4\|\nabla d\|_{L^2}^2,
    \end{aligned}
\end{equation}
where we have used 
\begin{equation*}
    \|\nabla d\|_{L^3}\leq C\|\nabla d\|_{L^2}^\frac{1}{2}\|\nabla
    ^2d\|_{L^2}^\frac{1}{2},~~\|\nabla d\|_{L^6}\leq C\|\nabla^2d\|_{L^2},
\end{equation*}
which follows from using \eqref{G-N} with $\frac{\partial d}{\partial n}=0$ on $\partial\Omega$.
Applying Gronwall's inequality, we obtain
\begin{equation}\label{359}
    \begin{aligned}
    &\sup_{t\in[0,T]}\|\nabla d\|_{L^2}^2 
    +\frac{\lambda}{C_3^2} \int_{0}^{T} \|\nabla^2 d\|_{L^2}^2 dt \leq \|\nabla d_0\|_{L^2}^2\cdot\exp\left\{C\int_0^T\|\nabla u\|_{L^2}^4 dt\right\}\\
    %&\leq \|\nabla d_0\|_{L^2}^2\exp\left\{\Tilde C(\|\nabla u_0\|_{L^2}, \|\nabla^2 d_0\|_{L^2})\int_0^T\|\nabla u\|_{L^2}^2 dt\right\}\\
    %&\leq C\|\nabla d_0\|_{L^2}^2 \exp\left\{C(\bar\rho^{1-\alpha}\|\nabla u_0\|_{L^2}^2+\bar\rho^{1-\alpha}\|\nabla^2 d_0\|_{L^2}^2)\right\}\\
    &\leq \|\nabla d_0\|_{L^2}^2
    \exp\left\{3C\mathcal{E}_u(0)\bar\rho^{1-2\alpha}\right\}\leq \|\nabla d_0\|_{L^2}^2
    \exp\left\{3C\|\nabla u_0\|_{L^2}^2\bar\rho^{1-\alpha}\right\}\\
    &\leq C\|\nabla d_0\|_{L^2}^2
    \leq \Tilde{c}_2 \|\nabla d_0\|_{L^3}^2, 
    \end{aligned}
\end{equation}
which together with $c_2\triangleq \max\{\Tilde{c}_2, \lambda^{-1}C^2_3\Tilde{c}_2\}$ yields \eqref{basic-est}.
%Combining the above inquality with \eqref{basic-est-0} and $\|\nabla d_0\|_{L^2}^2\leq C\|\nabla d_0\|_{L^3}^2$, yields \eqref{basic-est}.
\end{proof}

Now we can close the a prior assumption $\mathcal{E}_d(T)$.
\begin{lemma}\label{3d}
Under the assumption $\mathcal{E}_u(T) \le 3\mathcal{E}_u(0), \mathcal{E}_d(T)\le 2\delta$ with $\delta$ sufficiently small, 
we have
\begin{align}\label{d2d}
  \sup_{0\le t\le T}\|\nabla^2 d\|_{L^2}^2 
    +\int_{0}^{T}\|\nabla^3 d\|_{L^2}^2 dt\leq C\|\nabla^2d_0\|_{L^2}^2,
\end{align}
and
\begin{align}\label{tdd}
 \sup_{0\le t\le T}t\|\nabla^2 d\|_{L^2}^2 
    +\int_{0}^{T}t\|\nabla^3 d\|_{L^2}^2 dt\leq C\|\nabla d_0\|_{L^3}^2.   
\end{align}
As a consequence, there exists a positive constant $\varepsilon_0$ such that 
\begin{equation}
\mathcal{E}_d(T)\le \delta,
\end{equation}
provided $\|\nabla d_0\|_{L^3}\leq  \varepsilon_0=\varepsilon_0(\Omega,C_0,\mu, \nu, \lambda, \alpha, \|\nabla \rho_0\|_{L^q}, \| u_0\|_{H^1}, \|\nabla^2 d_0\|_{L^2})$.
\end{lemma}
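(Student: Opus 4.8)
I sketch how I would prove Lemma \ref{3d}.

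The plan is to establish the spatial $H^2$-estimates \eqref{d2d} and \eqref{tdd} for $d$ by an $L^2$ energy estimate on the director equation, and then to deduce $\mathcal{E}_d(T)\le\delta$ from them together with \eqref{basic-est} by interpolating the $L^3$-norm of $\nabla d$ between its $L^2$- and $H^1$-norms (the inequality $\|\nabla d\|_{L^3}^2\le C\|\nabla d\|_{L^2}\|\nabla^2 d\|_{L^2}$ is already used in the proof of \eqref{basic-est}). Because of the Neumann condition $\partial d/\partial n=0$, the elliptic regularity of Lemma \ref{Nuemann_elliptic} lets one work with $\|\Delta d\|_{L^2}$ and $\|\nabla\Delta d\|_{L^2}$ in place of $\|\nabla^2 d\|_{L^2}$ and $\|\nabla^3 d\|_{L^2}$, so the natural step is to test \eqref{ins}$_3$ against $\Delta^2 d$.

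Carrying this out, the boundary contributions from integration by parts must be handled with care, since only a Neumann condition on $d$ is available. The key is that $\partial_n d=0$ propagates in time ($\partial_n d_t=0$) and that $u=0$ on $\partial\Omega$, so the director equation restricted to $\partial\Omega$ reads $d_t=\lambda(\Delta d+|\nabla d|^2 d)$; substituting this makes several boundary integrals cancel, and the ones that remain (of the form $\int_{\partial\Omega}\partial_n(|\nabla d|^2 d)\,\Delta d\,ds$ and $\int_{\partial\Omega}(\partial_n u)\cdot\nabla d\,\Delta d\,ds$) are controlled by the trace inequality and Gagliardo--Nirenberg, carrying a small prefactor from $\|\nabla d\|_{L^3}^2\le2\delta$. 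The interior terms split into the self-interaction $\int\Delta(|\nabla d|^2 d)\cdot\Delta d$ — a sum of cubic/quartic expressions in the derivatives of $d$, in which the top-order $\|\nabla^3 d\|_{L^2}^2$ is absorbed for $\delta$ small (as fixed in \eqref{delta}) using $\|\nabla d\|_{L^\infty}\lesssim\|\nabla^3 d\|_{L^2}^{2/3}\|\nabla d\|_{L^3}^{1/3}$ and $\|\nabla d\|_{L^3}^2\le2\delta$ — and the transport term $\int\Delta(u\cdot\nabla d)\cdot\Delta d$, whose $u\cdot\nabla\Delta d$ part vanishes by $\mathrm{div}\,u=0$ and $u\cdot n=0$, whose $\nabla u:\nabla^2 d$ part is $\le\varepsilon\|\nabla^3 d\|_{L^2}^2+C\|\nabla u\|_{L^2}^4\|\nabla^2 d\|_{L^2}^2$, and whose $\Delta u\cdot\nabla d$ part is $\le\varepsilon\|\nabla^3 d\|_{L^2}^2+C\delta\|\nabla^2 u\|_{L^2}^2$. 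Here $\int_0^T\|\nabla u\|_{L^2}^2\,dt\lesssim\bar\rho^{1-\alpha}$ follows from \eqref{basic-est-0}, and $\int_0^T\|\nabla^2 u\|_{L^2}^2\,dt\lesssim\bar\rho^{1-\alpha}$ from a Stokes estimate on \eqref{ins}$_2$ together with $\mathcal{E}_u(T)\le3\mathcal{E}_u(0)$ and the large viscosity — both small uniformly in $T$. The resulting differential inequality has the form
\[
\frac{d}{dt}\|\nabla^2 d\|_{L^2}^2+c\|\nabla^3 d\|_{L^2}^2\le C\big(\|\nabla u\|_{L^2}^4+\cdots\big)\|\nabla^2 d\|_{L^2}^2+C\delta\|\nabla^2 u\|_{L^2}^2+(\text{small integrable remainder}),
\]
and since the time-integral of the bracketed coefficient is $\lesssim\bar\rho^{1-\alpha}$, a Gronwall/continuation argument (as in the derivation of \eqref{359}, taking $\delta$ small) yields \eqref{d2d} with a $T$-independent constant.

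For the weighted estimate \eqref{tdd}, I would multiply the above inequality by $t$ and integrate: the extra term $\|\nabla^2 d\|_{L^2}^2$ has integral $\le c_2\|\nabla d_0\|_{L^3}^2$ by \eqref{basic-est}, while the $t$-weighted right-hand side is bounded by $\sup_{[0,T]}(t\|\nabla^2 d\|_{L^2}^2)$ times small factors — using \eqref{d2d}, the already-established $\int_0^T\|\nabla^2 d\|_{L^2}^2\,dt$, and the smallness of $\int_0^T\|\nabla u\|_{L^2}^4\,dt$ — and hence absorbed, giving $\sup_{[0,T]}t\|\nabla^2 d\|_{L^2}^2+\int_0^T t\|\nabla^3 d\|_{L^2}^2\,dt\le C\|\nabla d_0\|_{L^3}^2$. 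Finally, since $\nabla d\cdot n=0$ on $\partial\Omega$, Gagliardo--Nirenberg gives $\|\nabla d\|_{L^3}^2\le C\|\nabla d\|_{L^2}\|\nabla^2 d\|_{L^2}$; combining $\|\nabla d\|_{L^2}\le c_2^{1/2}\|\nabla d_0\|_{L^3}$ from \eqref{basic-est} with $\sup_t\|\nabla^2 d\|_{L^2}\le C\|\nabla^2 d_0\|_{L^2}$ from \eqref{d2d} yields $\mathcal{E}_d(T)\le C\|\nabla^2 d_0\|_{L^2}\|\nabla d_0\|_{L^3}\le\delta$ as soon as $\|\nabla d_0\|_{L^3}\le\varepsilon_0:=\delta/(C\|\nabla^2 d_0\|_{L^2})$, which is exactly the asserted dependence of $\varepsilon_0$.

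The step I expect to be the main obstacle is the $H^2$-energy estimate itself, for two reasons. First, the boundary terms: unlike in the Cauchy problem of \cite{hineman2013well}, integration by parts is not free here, and one must use the cancellation forced by the equation on $\partial\Omega$ (where $u$ vanishes) together with trace inequalities carrying small prefactors from $\|\nabla d\|_{L^3}\le\sqrt{2\delta}$. Second, the coupling between $\|\nabla^3 d\|_{L^2}$ and $\|\nabla^2 u\|_{L^2}$ forces the director estimate to be closed jointly with a Stokes estimate for $u$, relying on the damping produced by the large density to make all the velocity contributions small uniformly in time.
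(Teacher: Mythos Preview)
Your overall architecture — an $H^2$ energy estimate for $d$, its time-weighted version, and closure via the interpolation $\|\nabla d\|_{L^3}^2\le C\|\nabla d\|_{L^2}\|\nabla^2 d\|_{L^2}$ together with \eqref{basic-est} — coincides with the paper's, and your derivation of $\varepsilon_0$ at the end is identical. The difference is in how the $H^2$ estimate is set up. Rather than testing \eqref{ins}$_3$ against $\Delta^2 d$, the paper differentiates once to obtain $\nabla d_t-\lambda\nabla\Delta d=-\nabla(u\cdot\nabla d)+\lambda\nabla(|\nabla d|^2 d)$ and multiplies by $-\nabla\Delta d$. A single integration by parts on the time term gives $-\int\nabla d_t\cdot\nabla\Delta d\,dx=\tfrac12\tfrac{d}{dt}\|\Delta d\|_{L^2}^2$, the only boundary contribution $\int_{\partial\Omega}\partial_t(\partial_n d)\cdot\Delta d\,ds$ vanishing by the Neumann condition; the right-hand side is then estimated \emph{without} any further integration by parts, using only $\|\nabla u\|_{L^2}$ (never $\|\nabla^2 u\|_{L^2}$). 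With $\|\nabla d\|_{L^\infty}+\|\nabla^2 d\|_{L^3}\le C\|\nabla d\|_{L^3}^{1/3}\|\nabla^3 d\|_{L^2}^{2/3}$ and Young's inequality one arrives at
\[
\frac{d}{dt}\|\Delta d\|_{L^2}^2+\lambda\|\nabla^3 d\|_{L^2}^2\le c_3\delta\|\nabla^3 d\|_{L^2}^2+C\|\Delta d\|_{L^2}^2\|\nabla u\|_{L^2}^6,
\]
and Gronwall with $\int_0^T\|\nabla u\|_{L^2}^6\,dt\le C\bar\rho^{1-\alpha}$ closes \eqref{d2d} directly; multiplying by $t$ and using $\int_0^T\|\Delta d\|_{L^2}^2\,dt\le c_2\|\nabla d_0\|_{L^3}^2$ from \eqref{basic-est} gives \eqref{tdd}. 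This choice of multiplier sidesteps both obstacles you flag: no boundary integrals survive to be handled by trace estimates, and no $\|\nabla^2 u\|_{L^2}^2$ term appears, so no Stokes estimate is needed at this stage — which also removes the circularity you would otherwise face, since the paper's bound \eqref{H2} for $\|u\|_{H^2}$ feeds $\|\nabla^3 d\|_{L^2}$ back into $\|\nabla^2 u\|_{L^2}$. Your route can in principle be pushed through (the small prefactors $\delta$ and $\bar\rho^{-2\alpha}$ should break the loop), but the paper's test function makes the lemma entirely self-contained and the proof considerably shorter.
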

\begin{proof}
Taking the gradient of $\eqref{ins}_3$, we get
that
\begin{equation}\label{dd}
    \nabla d_t- \lambda \nabla\Delta d=-\nabla(u\cdot\nabla d)+ \lambda \nabla(\vert\nabla d\vert^2 d).
\end{equation}
Multiplying \eqref{dd} by $-\nabla\Delta d $, and using integration by parts, we have 
\begin{equation}\label{d-2nd}
    \begin{aligned}
    &\frac{1}{2}\frac{d}{dt}\|\Delta d\|_{L^2}^2+\lambda \|\nabla\Delta d\|_{L^2}^2=\int\nabla(u\cdot\nabla d)\cdot\nabla \Delta d\,dx-\lambda \int\nabla(|\nabla d|^2d)\nabla\Delta d\,dx\\
    &\leq (\|\nabla u\|_{L^2}\|\nabla d\|_{L^\infty}+\|u\|_{L^6}\|\nabla^2d\|_{L^3}+\|\nabla d\|_{L^3}\|\nabla^2d\|_{L^6}+\|\nabla d\|_{L^\infty}^{\frac{3}{2}}\|\nabla d\|_{L^3}^{\frac{3}{2}})\|\nabla\Delta d\|_{L^2}\\
    &\leq C \|\nabla u\|_{L^2} \|\nabla d\|_{L^3}^{\frac{1}{3}} \|\nabla^3 d\|_{L^2}^{\frac{5}{3}} + C(\|\nabla d\|_{L^3}+\|\nabla d\|_{L^3}^2)\|\nabla^3d\|_{L^2}^2\\
    &\leq C(\eta+\|\nabla d\|_{L^3}+\|\nabla d\|_{L^3}^2)\|\nabla^3d\|_{L^2}^2+C_\eta\|\nabla d\|_{L^3}^2 \|\nabla u\|_{L^2}^6,
\end{aligned}
\end{equation}
where we have used 
% $$\|\nabla d\|_{L^\infty} \le C \|\nabla^2 d\|_{L^3}\leq C\|\nabla^3 d\|_{L^2}^{\frac{1}{2}}\|\nabla^2 d\|_{L^2}^\frac{1}{2}+ C\|\nabla^2 d\|_{L^2} ,$$
% and 
\begin{equation}\label{GN-d-2}
   \|\nabla^2 d\|_{L^3}\leq C\|\nabla^3 d\|_{L^2}^{\frac{2}{3}}\|\nabla d\|_{L^3}^\frac{1}{3},~~\|\nabla d\|_{L^\infty}\leq C\|\nabla d\|_{L^3}^\frac{1}{3}\|\nabla^3 d\|_{L^2}^\frac{2}{3}, 
\end{equation}
and
\begin{equation*}
-\int\partial_t\nabla d\cdot\nabla\Delta ddx=-\int_{\partial\Omega}\partial_t\frac{\partial d}{\partial n}\cdot\Delta d ds
+\int\partial_t\Delta d\cdot\Delta d dx=\frac{1}{2}\frac{d}{dt}\|\Delta d\|_{L^2}^2.
\end{equation*}
Then using elliptic estimates (Lemma \ref{slip_estimate}) and choosing $\eta$ small enough, we have
\begin{equation}\label{d-2nd1}
    \begin{aligned}
    \frac{d}{dt}\|\Delta d\|_{L^2}^2
    +\lambda\|\nabla^3 d\|_{L^2}^2
    \leq& c_3 \delta \|\nabla^3 d\|_{L^2}^2+C\|\nabla d\|_{L^3}^2 \|\nabla u\|_{L^2}^6\\
    \leq& c_3 \delta \|\nabla^3 d\|_{L^2}^2+C\|\nabla d\|_{L^2}\|\nabla^2 d\|_{L^2} \|\nabla u\|_{L^2}^6\\
    \leq& c_3 \delta \|\nabla^3 d\|_{L^2}^2+C\|\nabla^2 d\|_{L^2}^2 \|\nabla u\|_{L^2}^6\\
    \leq& c_3 \delta \|\nabla^3 d\|_{L^2}^2+C\|\Delta d\|_{L^2}^2 \|\nabla u\|_{L^2}^6,
\end{aligned}
\end{equation}
where we have used the Poincare's inequality. 
Then after choosing 
$$\delta<\frac{\lambda}{2c_3},$$
Gronwall's inequality implies 
\begin{equation}\label{d-2nd2}
    \begin{aligned}
    &\sup_{0\le t\le T}\|\nabla^2 d\|_{L^2}^2 
    +\frac{\lambda}{2}\int_{0}^{T}\|\nabla^3 d\|_{L^2}^2 dt
    \leq C \|\nabla^2 d_0\|_{L^2}^2 \cdot \exp{\left\{C\int_{0}^{T}\|\nabla u\|_{L^2}^6dt\right\}} \\
    \le& C \|\nabla^2 d_0\|_{L^2}^2 \cdot \exp\{C \bar\rho^{1-\alpha}\} \le c_4 \|\nabla^2 d_0\|_{L^2}^2 .
\end{aligned}
\end{equation}
Multiplying \eqref{d-2nd1} by $t$, integrating the resultant equation and using \eqref{basic-est} give
\begin{equation}\label{d-2nd2-t}
    \begin{aligned}
    &\sup_{0\le t\le T} t\|\nabla^2 d\|_{L^2}^2 
    +\frac{\Tilde{C}}{2} \int_{0}^{T} t \|\nabla^3 d\|_{L^2}^2 dt
    \leq C  \int_{0}^{T}\|\Delta d\|_{L^2}^2 dt \cdot \exp{\left\{C\int_{0}^{T}\|\nabla u\|_{L^2}^6 dt\right\}} \\
    \le& C c_2 \|\nabla d_0\|_{L^3}^2 \exp\{C \bar\rho^{1-\alpha}\} \le C \|\nabla d_0\|_{L^3}^2.
\end{aligned}
\end{equation}
% Multiplying \eqref{ins}$_3$ by $-\Delta d$, integrating the resultant equation and using the similar argument as \eqref{d-1st}-\eqref{d-ell}, we have
% \begin{equation}
%     \begin{aligned}
%       \frac{d}{dt}\|\nabla d\|_{L^2}^2+\|\nabla^2 d\|_{L^2}^2
%       &\leq C\int|\nabla u||\nabla d|^2 dx
%       \leq C\|\nabla u\|_{L^2}\|\nabla d\|_{L^3}\|\nabla d\|_{L^6}\\
%       &\leq C\|\nabla u\|_{L^2}\|\nabla d\|_{L^2}^{\frac{1}{2}}\|\nabla^2 d\|_{L^2}^\frac{3}{2}\\
%       &\leq \frac{1}{2}\|\nabla^2 d\|_{L^2}^2+C\|\nabla u\|_{L^2}^4\|\nabla d\|_{L^2}^2.
%     \end{aligned}
% \end{equation}
% Applying Gronwall's inequality, we obtain
% \begin{equation}\label{359}
%     \begin{aligned}
%     &\sup_{t\in[0,T]}\|\nabla d\|_{L^2}^2 + \int_{0}^{T} \|\nabla^2 d\|_{L^2}^2 dt \leq \|\nabla d_0\|_{L^2}^2\cdot\exp\left\{C\int_0^T\|\nabla u\|_{L^2}^4 dt\right\}\\
%     %&\leq \|\nabla d_0\|_{L^2}^2\exp\left\{\Tilde C(\|\nabla u_0\|_{L^2}, \|\nabla^2 d_0\|_{L^2})\int_0^T\|\nabla u\|_{L^2}^2 dt\right\}\\
%     %&\leq C\|\nabla d_0\|_{L^2}^2 \exp\left\{C(\bar\rho^{1-\alpha}\|\nabla u_0\|_{L^2}^2+\bar\rho^{1-\alpha}\|\nabla^2 d_0\|_{L^2}^2)\right\}\\
%     &\leq C\|\nabla d_0\|_{L^2}^2
%     \exp\left\{C\bar\rho^{1-\alpha}\right\}\\
%     &\leq C\|\nabla d_0\|_{L^2}^2
%     \leq C_4\|\nabla d_0\|_{L^3}^2.  
%     \end{aligned}
% \end{equation}
Finally, \eqref{359} together with \eqref{a1} yields
\begin{equation}
\begin{aligned}
    \mathcal{E}_d(T)&\leq C \sup_{t\in[0,T]}\|\nabla d\|_{L^2}\sup_{t\in[0,T]}\|\nabla^2 d\|_{L^2}\\
    &\leq C \sqrt{c_4} \|\nabla^2 d_0\|_{L^2}\sqrt{c_2}\|\nabla d_0\|_{L^3}<\delta,
\end{aligned}
\end{equation}
provided 
$$\|\nabla d_0\|_{L^3} < \varepsilon_0 \triangleq \frac{\delta}{C \sqrt{c_2 c_4}\|\nabla^2 d_0\|_{L^2}}.$$
\end{proof}

High-order a priori estimates rely on the following regularity results for density-dependent Stokes equations.
\begin{lemma}\label{stokes-e}
Assume that $\rho\in W^{1,q}, 3<q<6$, and $\bar \rho \le \rho \le C_0 \bar \rho$.
Let $(u, P) \in H_{0,\sigma}^1\times L^2$ be the unique weak solution to the boundary value problem
\begin{equation}\label{stokes}
	\left\{ \begin{array}{l}
		-\mathrm{div}(2\mu\rho^\alpha D(u))+\nabla P=F,~~\mathrm{in}~\Omega, \\
		\mathrm{div} u=0,~~\mathrm{in}~\Omega,\\
        \int \frac{P}{\rho^\alpha} dx=0,~~\mathrm{in}~\Omega.
	\end{array} \right.
\end{equation}
Then we have the following regularity results:

(1) If $F\in L^2$, then $(u, P)\in H^2\times H^1$ and
\begin{equation}\label{stokes-2}
\| u\|_{H^2}+\bigg\|\frac{P}{\rho^\alpha}\bigg\|_{H^1}
\leq C(\bar\rho^{-\alpha}+\bar\rho^{-\alpha-\frac{q}{q-3}}\|\nabla \rho \|_{L^q}^\frac{q}{q-3})\|F\|_{L^2};
\end{equation}

(2) If $F\in L^q$ for some $q\in (3,6)$ then $(u, P)\in W^{2,q}\times W^{1,q}$ and
\begin{equation}\label{stokes-q}
\| u\|_{W^{2,q}}+\bigg\|\frac{P}{\rho^\alpha}\bigg\|_{W^{1,q}}
\leq  C(\bar\rho^{-\alpha}+\bar\rho^{-\alpha-\frac{5q-6}{2(q-3)}}\|\nabla \rho \|_{L^q}^\frac{5q-6}{2(q-3)})\|F\|_{L^q}.
\end{equation}
Here the constant C in \eqref{stokes-2} and \eqref{stokes-q} depends on $\Omega, q$.
\end{lemma}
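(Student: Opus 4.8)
The plan is to reduce the density-dependent Stokes system \eqref{stokes} to the constant-coefficient Stokes system with an explicit remainder, and then bootstrap. Write $\mu(\rho)=\mu\bar\rho^\alpha\cdot(\rho/\bar\rho)^\alpha$ and set $a=\rho^\alpha$, so that $-\mathrm{div}(2\mu a\,D(u))+\nabla P = F$. Expanding the divergence gives $-2\mu a\,\Delta u + \nabla P = F + 2\mu\,\nabla a\cdot D(u)$ (using $\mathrm{div}\,u=0$, so $\mathrm{div}\,D(u)=\tfrac12\Delta u$). Dividing through by $a$ and absorbing the variable coefficient into the pressure is not quite clean because of the $\nabla P$ term, so instead I would keep the system in the form
\begin{equation*}
-\mu\Delta u + \nabla\!\left(\frac{P}{2a}\right) = \frac{F}{2a} + \mu\,\frac{\nabla a}{a}\cdot D(u) + \frac{P}{2}\nabla\!\left(\frac{1}{a}\right),
\end{equation*}
which is a constant-coefficient Stokes system (with the standard divergence-free constraint and the normalization $\int P/a\,dx=0$ fixing the pressure constant) whose right-hand side $G$ involves $F$, $\nabla\rho$, $\nabla u$ and $P$. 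The first step is therefore to record the classical Agmon–Douglis–Nirenberg/Cattabriga estimate for the constant-coefficient Stokes problem on the smooth bounded domain $\Omega$: $\|u\|_{W^{2,p}}+\|P/a\|_{W^{1,p}}\le C\|G\|_{L^p}$ for $1<p<\infty$, with $C=C(\Omega,p)$.

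The second step is to estimate $\|G\|_{L^p}$ for the two cases $p=2$ and $p=q\in(3,6)$. The genuinely new terms are $\mu\,a^{-1}\nabla a\cdot D(u)$ and $\tfrac{P}{2}\nabla(a^{-1})$; since $a=\rho^\alpha$ with $\bar\rho\le\rho\le C_0\bar\rho$, we have $a\simeq\bar\rho^\alpha$, $|\nabla a|=\alpha\rho^{\alpha-1}|\nabla\rho|\lesssim\bar\rho^{\alpha-1}|\nabla\rho|$, hence $|a^{-1}\nabla a|\lesssim\bar\rho^{-1}|\nabla\rho|$ and similarly $|\nabla(a^{-1})|\lesssim\bar\rho^{-\alpha-1}|\nabla\rho|$. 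For $p=2$: by Hölder and Gagliardo–Nirenberg, $\|a^{-1}\nabla a\cdot D(u)\|_{L^2}\lesssim\bar\rho^{-1}\|\nabla\rho\|_{L^q}\|\nabla u\|_{L^{2q/(q-2)}}\lesssim\bar\rho^{-1}\|\nabla\rho\|_{L^q}\|u\|_{W^{2,2}}^{\theta}\|\nabla u\|_{L^2}^{1-\theta}$ with the interpolation exponent $\theta=3/q$ (from $\|\nabla u\|_{L^{2q/(q-2)}}\lesssim\|\nabla^2 u\|_{L^2}^{3/q}\|\nabla u\|_{L^2}^{1-3/q}$ in three dimensions), and the pressure term is handled the same way using the elementary bound $\|P\|_{L^2}\lesssim\|u\|_{H^1}+\|F\|_{L^2}$ obtained by testing the weak formulation. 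For $p=q$: one uses $\|a^{-1}\nabla a\cdot D(u)\|_{L^q}\lesssim\bar\rho^{-1}\|\nabla\rho\|_{L^q}\|\nabla u\|_{L^\infty}$ and then $\|\nabla u\|_{L^\infty}\lesssim\|u\|_{W^{2,q}}^{\beta}\|\nabla u\|_{L^2}^{1-\beta}$ for suitable $\beta$, again controlling $\|P\|_{L^q}$ by $\|u\|_{H^1}$, $\|F\|_{L^q}$ and the lower-order terms already bounded.

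The third step is the absorption argument: plugging these bounds into the constant-coefficient estimate and using Young's inequality to absorb the $\|u\|_{W^{2,p}}$ (resp. $\|P/a\|_{W^{1,p}}$) factors on the right, one obtains, after tracking the powers of $\bar\rho$ carefully, the asserted estimates \eqref{stokes-2} and \eqref{stokes-q}; the extra negative powers of $\bar\rho$ with the $\|\nabla\rho\|_{L^q}$ factors are exactly the penalty one pays for absorbing the $\|\nabla\rho\|_{L^q}$-weighted terms by Young's inequality (each application of Young on a term of the shape $\bar\rho^{-1}\|\nabla\rho\|_{L^q}\,X^{\theta}Y^{1-\theta}$ costs a factor $(\bar\rho^{-1}\|\nabla\rho\|_{L^q})^{1/(1-\theta)}$ on the lower-order side). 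I would also note that the basic $H^1$-bound $\|\nabla u\|_{L^2}\lesssim\bar\rho^{-\alpha}\|F\|_{L^2}$ follows immediately from testing \eqref{stokes} with $u$ and using Korn/Poincaré, so all the lower-order quantities $Y$ above are ultimately $\|F\|_{L^2}$ (or $\|F\|_{L^q}$) up to powers of $\bar\rho$. The main obstacle is the careful bookkeeping of the $\bar\rho$-exponents through the successive interpolation and Young-inequality steps — getting the precise exponents $\tfrac{q}{q-3}$ in \eqref{stokes-2} and $\tfrac{5q-6}{2(q-3)}$ in \eqref{stokes-q} requires choosing the interpolation parameters and the Young splittings so that the $\|u\|_{W^{2,p}}$-type terms are absorbed with coefficient exactly $\tfrac12$; the analytic content (ADN estimates plus Gagliardo–Nirenberg) is entirely standard.
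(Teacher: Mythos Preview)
Your proposal is correct and follows essentially the same route as the paper: rewrite the system as a constant-coefficient Stokes problem for $(u,P/\rho^\alpha)$ with remainder $\rho^{-\alpha}F+2\mu\alpha\rho^{-1}\nabla\rho\cdot D(u)+\alpha\rho^{-1}\nabla\rho\,(P/\rho^\alpha)$, apply the Cattabriga/ADN estimate, interpolate $\|\nabla u\|_{L^{2q/(q-2)}}$ (resp.\ $\|\nabla u\|_{L^\infty}$) between $\|\nabla u\|_{L^2}$ and $\|\nabla^2u\|_{L^2}$ (resp.\ $\|\nabla^2u\|_{L^q}$), and absorb via Young, with the basic bounds $\|\nabla u\|_{L^2}\lesssim\bar\rho^{-\alpha}\|F\|_{L^2}$ and $\|P/\rho^\alpha\|_{L^2}\lesssim\bar\rho^{-\alpha}\|F\|_{L^2}$ as input. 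The only point the paper makes more explicit is the pressure $L^2$-bound, which it obtains by testing the equation against a Bogovski\u{\i} lift $v\in H^1_0$ of $P/\rho^\alpha$ (your ``testing the weak formulation'' is exactly this).
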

\begin{proof}
The proof of this theorem is the same as in \cite{huang2024global}, we repeat it here for the reader's convenience.
Multiply the first equation of $\eqref{stokes}_1$ by $u$ and integrate over $\Omega$, then by Cauchy's inequality,
\begin{equation}
    \int2\rho^\alpha\vert D(u)\vert^2 dx=\int F\cdot udx\leq \|F\|_{L^2}\|u\|_{L^2}.
\end{equation}
Note that
\begin{equation}\label{deg}
    2\int \vert D(u)\vert^2dx=\int\vert\nabla u\vert^2dx,
\end{equation}
hence, it follows from \eqref{2.3} and \eqref{basic-est} that
\begin{equation}
    \|\nabla u\|_{L^2}\leq C\bar{\rho}^{-\alpha}\| u\|_{L^2}\| F\|_{L^2}\leq C\bar{\rho}^{-\alpha}\| F\|_{L^2}.
\end{equation}

Since $\int\frac{P}{\rho^\alpha}dx =0$, according to Lemma \ref{bovosgii}, there exists a function $v\in H^1_0$, such that
\begin{equation}
    \mathrm{div}v=\frac{P}{\rho^\alpha},
\end{equation}
and
\begin{equation}
    \|\nabla v\|_{L^2}\leq C\bigg\|\frac{P}{\rho^\alpha}\bigg\|_{L^2}.
\end{equation}
Multiplying the first equation of \eqref{stokes} by $-v$, and integrating over $\Omega$, then making use of Poincar\'e's inequality, one obtains
\begin{equation}
\begin{split}
    \int\frac{P^2}{\rho^\alpha}dx
    &=-\int F\cdot vdx+2\int \mu\rho^\alpha D(u):\nabla v dx\\
    &\leq C\|F\|_{L^2}\|v\|_{L^2}
    +C\bar{\rho}^\alpha\|\nabla u\|_{L^2}\|\nabla v\|_{L^2}\\
    &\leq C\|F\|_{L^2}\|\nabla v\|_{L^2}\\
    &\leq C\|F\|_{L^2}\bigg\|\frac{P}{\rho^\alpha}\bigg\|_{L^2}.
\end{split}
\end{equation}
On the other hand,
\begin{equation}
    \int\frac{P^2}{\rho^\alpha}dx\geq C\bar{\rho} \int\frac{P^2}{\rho^{2\alpha}}dx,
\end{equation}
hence
\begin{equation}
    \bigg\|\frac{P}{\rho^\alpha}\bigg\|_{L^2}\leq C\bar{\rho}^{-\alpha}\|F\|_{L^2}.
\end{equation}

Rewrite \eqref{stokes}$_2$ as 
\begin{equation}
	-\mu\Delta u+\nabla\biggl(\frac{P}{\rho^\alpha}\biggr)=\frac{F}{\rho^\alpha}+2\mu\alpha \rho^{-1}\nabla \rho \cdot D(u)
 +\alpha \rho^{-1}\nabla \rho \frac{P}{\rho^\alpha}. 
\end{equation}
Stokes estimates and Lemma \ref{G-N} yield
\begin{equation}
	\begin{aligned}
		&\|\nabla^2 u\|_{L^2}+\bigg\|\nabla\biggl(\frac{P}{\rho^\alpha}\biggr)\bigg\|_{L^2} \\	
		\leq& C\biggl(\bar\rho^{-\alpha}\|F\|_{L^2}
        +\bar\rho^{-1} \|\nabla\rho\cdot\nabla u\|_{L^2}
        +\bar\rho^{-1} \|\nabla\rho\cdot\frac{P}{\rho^\alpha}\|_{L^2}\biggr)  \\
        \leq& C\biggl(\bar\rho^{-\alpha}\|F\|_{L^2}
        +\bar\rho^{-1} \|\nabla \rho \|_{L^q} \|\nabla u\|_{L^{\frac{2q}{q-2}}}
        +\bar\rho^{-1} \|\nabla \rho \|_{L^q} \bigg\|\frac{P}{\rho^\alpha}\bigg\|_{L^{\frac{2q}{q-2}}} \biggr) \\
		\leq& C\biggl(\bar\rho^{-\alpha}\|F\|_{L^2}
        +\bar\rho^{-1} \|\nabla \rho \|_{L^q} \|\nabla u\|_{L^2}^\frac{q-3}{q} \|\nabla^2 u\|_{L^2}^\frac{3}{q}
        +\bar\rho^{-1} \|\nabla \rho \|_{L^q} \bigg\|\frac{P}{\rho^\alpha}\bigg\|_{L^2}^\frac{q-3}{q}\bigg\|\nabla\biggl(\frac{P}{\rho^\alpha}\biggr)\bigg\|_{L^2}^\frac{3}{q} \biggr). 
	\end{aligned}
\end{equation}
By Young’s inequality,
\begin{equation}
	\begin{aligned}
		&\|\nabla^2 u\|_{L^2}+\bigg\|\nabla\biggl(\frac{P}{\rho^\alpha}\biggr)\bigg\|_{L^2} \\	
		\leq& C\bar\rho^{-\alpha}\|F\|_{L^2}
        +C\bar\rho^{-\frac{q}{q-3}} \|\nabla \rho \|_{L^q}^\frac{q}{q-3} \biggl(\|\nabla u\|_{L^2}+\bigg\|\frac{P}{\rho^\alpha}\bigg\|_{L^2} \biggr)  \\
        \leq& C(\bar\rho^{-\alpha}+\bar\rho^{-\alpha-\frac{q}{q-3}}\|\nabla \rho \|_{L^q}^\frac{q}{q-3})\|F\|_{L^2}.
	\end{aligned}
\end{equation}
Similarly,
\begin{equation}
\|\nabla^2 u\|_{L^q}+\bigg\|\nabla\biggl(\frac{P}{\rho^\alpha}\biggr)\bigg\|_{L^q}
        \leq  C(\bar\rho^{-\alpha}+\bar\rho^{-\alpha-\frac{5q-6}{2(q-3)}}\|\nabla \rho \|_{L^q}^\frac{5q-6}{2(q-3)})\|F\|_{L^q}.
\end{equation}

\end{proof}

% \begin{equation}
% 	\begin{aligned}
% 		&\|\nabla^2 u\|_{L^2}+\bigg\|\nabla\biggl(\frac{P}{\rho^\alpha}\biggr)\bigg\|_{L^2} \\	
% 		\le& C \bar\rho^{-\alpha+\frac12}\|\sqrt{\rho}u_t\|_{L^2}+ \bar\rho^{-\alpha+1}  \|u\cdot \nabla u\|_{L^2} + \bar\rho^{-1} \|\nabla\rho\cdot\nabla u\|_{L^2}
%         +\bar\rho^{-1} \|\nabla\rho\cdot\frac{P}{\rho^\alpha}\|_{L^2}  \\
% 		\le & C \bar\rho^{-\alpha+\frac12}\|\sqrt{\rho}u_t\|_{L^2}+ \bar\rho^{-\alpha+1} \|\nabla u\|_{L^2}^{\frac{3}{2}} \|\nabla u\|_{L^6}^{\frac{1}{2}} + \bar\rho^{-1} \|\nabla \rho \|_{L^q} \|\nabla u\|_{L^{\frac{2q}{q-2}}}\\
% 		\le & \frac{1}{2} \|\nabla u\|_{L^6} + C \bar\rho^{-\alpha+\frac12}\|\sqrt{\rho}u_t\|_{L^2}+ \bar\rho^{-2\alpha+2} \|\nabla u\|_{L^2}^{3}  + \bar\rho^{-\frac{q}{q-3}} \|\nabla \rho \|_{L^q}^{\frac{q}{q-3}} \|\nabla u\|_{L^{2}},
% 	\end{aligned}
% \end{equation}
% and 
% \begin{equation}
% 	\begin{aligned}
% 		&\|\nabla^2 u\|_{L^q} \le C \|H\|_{L^q}\\
% 		\le& C \bar\rho^{-\alpha}\|-\rho(u_t+u\cdot \nabla u)+2\mu \nabla \rho^\alpha \cdot \mathcal{D}u\|_{L^q} \\
% 		\le& C \bar\rho^{-\alpha}\|{\rho}u_t\|_{L^q}+ \bar\rho^{-\alpha+1}  \|u\cdot \nabla u\|_{L^q} + \bar\rho^{-1} \|\nabla \rho \mathcal{D}u\|_{L^q}\\
% 		\le &C \left(\bar\rho^{-\alpha+\frac{5q-6}{4q}}\|\sqrt{\rho}u_t\|_{L^2}^{\frac{6-q}{2q}} \|\nabla u_t\|_{L^2}^{\frac{3(q-3)}{2q}} + \bar\rho^{-\alpha+1}  \| u\|_{L^q} \| \nabla u\|_{L^\infty} + \bar\rho^{-1} \|\nabla \rho\|_{L^q} \| \nabla u\|_{L^\infty}\right),
% 	\end{aligned}
% \end{equation}

As a consequence, we have the following high-order estimate of the velocities which will be used frequently.

\begin{lemma}
	Under the assumption \eqref{a1}, it holds that 
	\begin{equation}\label{H2}
	    \| u\|_{H^2} \leq C(\bar\rho^{\frac{1}{2}-\alpha}\|\sqrt{\rho} u_t\|_{L^2}
+\bar\rho^{2-2\alpha}\|\nabla u\|_{L^2}^3+\bar\rho^{-\alpha}\|\nabla d\|_{L^3}\|\nabla^3d\|_{L^2}),
	\end{equation}
		and
        \begin{equation}\label{W2q}
	    \| u\|_{W^{2,q}} \leq C \bar\rho^{-\alpha}  \|\rho u_t\|_{L^q} + C \bar \rho^{(1-\alpha)\frac{5q-6}{q}} \|\nabla u\|_{L^2}^{\frac{6(q-1)}{q}}+ C \bar\rho^{-\alpha} \|\nabla d\|_{L^3}^{\frac{2}{q}}\|\nabla^3 d\|_{L^2}^{2-\frac{2}{q}}.
	\end{equation}	
\end{lemma}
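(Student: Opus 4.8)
The plan is to view the velocity equation \eqref{ins}$_2$ as the density-dependent Stokes system \eqref{stokes} with the right-hand side
\begin{equation*}
F = -\rho u_t - \rho u\cdot\nabla u - \nu\,\mathrm{div}(\nabla d\odot\nabla d),
\end{equation*}
after rewriting the pressure so that the normalization $\int P/\rho^\alpha\,dx = 0$ holds (this only shifts $P$ by a constant multiple of $\rho^\alpha$ and does not affect $\nabla u$). Then \eqref{H2} should follow from applying the $L^2$-estimate \eqref{stokes-2} and \eqref{W2q} from the $L^q$-estimate \eqref{stokes-q}, once each piece of $\|F\|_{L^2}$ (resp.\ $\|F\|_{L^q}$) is bounded in terms of the quantities on the right-hand sides.

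The key steps, in order: First, for the $L^2$-bound, estimate $\|\rho u_t\|_{L^2}\le C\bar\rho^{1/2}\|\sqrt\rho u_t\|_{L^2}$ using \eqref{2.3}. Second, handle the convection term by $\|\rho u\cdot\nabla u\|_{L^2}\le C\bar\rho\|u\|_{L^6}\|\nabla u\|_{L^3}\le C\bar\rho\|\nabla u\|_{L^2}^{3/2}\|\nabla^2 u\|_{L^2}^{1/2}$ via Sobolev and Gagliardo--Nirenberg (Lemma \ref{G-N} with the Poincar\'e improvement since $u\in H^1_0$), then absorb the $\|\nabla^2 u\|_{L^2}^{1/2}$ factor into the left-hand side $\|u\|_{H^2}$ after multiplying \eqref{stokes-2} through; the resulting bound on the convection contribution is $C\bar\rho^{2}\bar\rho^{-2\alpha}\|\nabla u\|_{L^2}^3$ (keeping the leading power of $\bar\rho^{-\alpha}$ from \eqref{stokes-2} and treating the $\|\nabla\rho\|_{L^q}$-dependent term as lower order, absorbed into the constant using $\mathcal{E}_\rho(T)\le 3\mathcal{E}_\rho(0)$). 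Third, for the director term write $\mathrm{div}(\nabla d\odot\nabla d) = \nabla d\cdot\nabla^2 d + (\text{terms of the same type})$, so $\|\mathrm{div}(\nabla d\odot\nabla d)\|_{L^2}\le C\|\nabla d\|_{L^3}\|\nabla^2 d\|_{L^6}\le C\|\nabla d\|_{L^3}\|\nabla^3 d\|_{L^2}$ by H\"older and Lemma \ref{G-N} (using $\partial d/\partial n = 0$); multiplying by the $\bar\rho^{-\alpha}$ prefactor of \eqref{stokes-2} gives the last term of \eqref{H2}. For \eqref{W2q} one repeats this with the $L^q$-estimate \eqref{stokes-q}: $\|\rho u_t\|_{L^q}$ appears directly with prefactor $\bar\rho^{-\alpha}$; the convection term is bounded by $\|\rho u\cdot\nabla u\|_{L^q}\le C\bar\rho\|u\|_{L^\infty}\|\nabla u\|_{L^q}$ and then by interpolation in terms of $\|\nabla u\|_{L^2}$ and $\|u\|_{H^2}$ (itself controlled via \eqref{H2}), matching the power $\bar\rho^{(1-\alpha)(5q-6)/q}\|\nabla u\|_{L^2}^{6(q-1)/q}$; and $\|\mathrm{div}(\nabla d\odot\nabla d)\|_{L^q}\le C\|\nabla d\|_{L^{3}}^{2/q}\|\nabla^2 d\|_{L^{?}}^{\cdots}\le C\|\nabla d\|_{L^3}^{2/q}\|\nabla^3 d\|_{L^2}^{2-2/q}$ after the appropriate Gagliardo--Nirenberg interpolation, giving the final term.

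The main obstacle I expect is bookkeeping the powers of $\bar\rho$: one must carefully track which term in the prefactors of \eqref{stokes-2}, \eqref{stokes-q} (the pure $\bar\rho^{-\alpha}$ piece versus the $\|\nabla\rho\|_{L^q}$-weighted piece) pairs with which piece of $\|F\|$, and then check that after absorbing the super-linear part of the convection term into $\|u\|_{H^2}$ on the left, the surviving exponent of $\bar\rho$ is exactly $2-2\alpha$ in \eqref{H2} and $(1-\alpha)(5q-6)/q$ in \eqref{W2q}. This is where the hypothesis $\alpha>1$ is implicitly used (so these powers are negative and the terms are small for large $\bar\rho$), and where one must be careful that the $\|\nabla\rho\|_{L^q}$-dependent corrections, bounded by $\mathcal{E}_\rho(T)\le 3\mathcal{E}_\rho(0)$, are genuinely of lower order in $\bar\rho$ and can be folded into the constant $C$. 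The absorption argument for the quadratic-in-$\|u\|_{H^2}$ convection term (using $ab\le \varepsilon a^2 + C_\varepsilon b^2$ with $a = \|u\|_{H^2}^{1/2}$) is routine but must be done before one can state the clean bounds above.
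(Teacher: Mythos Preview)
Your proposal is essentially correct and matches the paper's argument for the $H^2$ bound \eqref{H2}: both identify $F=-\rho\dot u-\nu\,\mathrm{div}(\nabla d\odot\nabla d)$, apply \eqref{stokes-2} with the $\|\nabla\rho\|_{L^q}$-weighted prefactor absorbed into the constant via $\mathcal{E}_\rho(T)\le 3\mathcal{E}_\rho(0)$ and $\bar\rho\ge 1$, bound the three pieces of $\|F\|_{L^2}$ exactly as you describe, and absorb the $\|\nabla u\|_{H^1}^{1/2}$ factor from the convection term into the left-hand side via Young's inequality.

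For the $W^{2,q}$ bound there is a minor methodological difference. The paper does \emph{not} route the convection term through the already-proven \eqref{H2}; instead it writes $\|\rho u\cdot\nabla u\|_{L^q}\le C\bar\rho\|u\|_{L^6}\|\nabla u\|_{L^{6q/(6-q)}}$, interpolates $\|\nabla u\|_{L^{6q/(6-q)}}$ between $\|\nabla u\|_{L^2}$ and $\|\nabla u\|_{W^{1,q}}$ to obtain $C\bar\rho\|\nabla u\|_{L^2}^{6(q-1)/(5q-6)}\|\nabla u\|_{W^{1,q}}^{(4q-6)/(5q-6)}$, and then absorbs $\|\nabla u\|_{W^{1,q}}$ directly into the left-hand side $\|u\|_{W^{2,q}}$ (a second absorption, independent of the one done for \eqref{H2}). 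This yields the clean term $C\bar\rho^{(1-\alpha)(5q-6)/q}\|\nabla u\|_{L^2}^{6(q-1)/q}$ with no extra byproducts. Your suggested route, bounding $\|u\|_{L^\infty}\|\nabla u\|_{L^q}$ via $\|u\|_{H^2}$ and then invoking \eqref{H2}, does reproduce that term when you substitute the $\bar\rho^{2-2\alpha}\|\nabla u\|_{L^2}^3$ piece of \eqref{H2}, but it also produces cross terms from the $\bar\rho^{1/2-\alpha}\|\sqrt\rho u_t\|_{L^2}$ and $\bar\rho^{-\alpha}\|\nabla d\|_{L^3}\|\nabla^3 d\|_{L^2}$ pieces, which are not part of the stated estimate \eqref{W2q}. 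These cross terms are harmless in the later applications (they carry favorable powers of $\bar\rho$), but they do not match the lemma as stated; the paper's direct absorption into $\|u\|_{W^{2,q}}$ avoids them entirely. So your plan is sound, but for the $W^{2,q}$ convection term you should absorb into the left-hand side rather than go through \eqref{H2}.
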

\begin{proof}
Let 
\begin{equation}
    F=-\rho \dot u-\nu \mathrm{div}(\nabla d\odot\nabla d)
\end{equation}
in Lemma \ref{stokes-e}. 
\eqref{stokes-2} together with \eqref{a1}, \eqref{basic-est} and \eqref{GN-p} gives
\begin{equation}\label{d2uu}
\begin{aligned}
\|u\|_{H^2}
\leq& C(\bar\rho^{-\alpha}+\bar\rho^{-\alpha-\frac{q}{q-3}}\mathcal{E}_\rho(0)^\frac{q}{q-3})(\|\rho \dot  u\|_{L^2}+\|\mathrm{div}(\nabla d\odot\nabla d)\|_{L^2})\\
\leq& C\bar\rho^{-\alpha}(\|\rho \dot  u\|_{L^2}+\|\mathrm{div}(\nabla d\odot\nabla d)\|_{L^2})\\
\leq& C\bar\rho^{\frac{1}{2}-\alpha}\|\sqrt\rho u_t\|_{L^2} +C \bar\rho^{1-\alpha} \|u\|_{L^6}\|\nabla u\|_{L^3} +C \bar\rho^{-\alpha}\|\nabla d\|_{L^3}\|\nabla^2 d\|_{L^6}\\
\leq& C \bar\rho^{\frac{1}{2}-\alpha}\|\sqrt\rho u_t\|_{L^2} +C \bar\rho^{1-\alpha}\|\nabla u\|_{L^2}^\frac{3}{2}\|\nabla u\|_{H^1}^\frac{1}{2}+C\bar\rho^{-\alpha}\|\nabla d\|_{L^3}\|\nabla^3 d\|_{L^2}\\
\leq& \frac 12 \|\nabla u\|_{H^1}+  C(\bar\rho^{\frac{1}{2}-\alpha}\|\sqrt{\rho} u_t\|_{L^2}
+\bar\rho^{2-2\alpha}\|\nabla u\|_{L^2}^3+\bar\rho^{-\alpha}\|\nabla d\|_{L^3}\|\nabla^3 d\|_{L^2}),
\end{aligned}
\end{equation}
provided $\bar\rho\geq 1$. Here we have used $\|\nabla^2d\|_{L^6}\leq C\|\nabla^3 d\|_{L^2}$ which follows from \eqref{GN-p} with $\frac{\partial d}{\partial n}\big|_{\partial\Omega}=0.$
Similarly, Gagliardo-Nirenber inequality together with \eqref{stokes-q} yields
    \begin{equation}
		\begin{aligned} 
		\| u\|_{W^{2,q}} \le & C \bar\rho^{-\alpha}  (\|\rho \dot u\|_{L^q}+\|\mathrm{div}(\nabla d\odot\nabla d)\|_{L^q}) \\
        \le & C \bar\rho^{-\alpha} (  \|\rho u_t\|_{L^q} + \bar \rho \|u\|_{L^6} \| \nabla u\|_{L^{\frac{6q}{6-q}}}+\|\nabla d\|_{L^{\frac{6q}{6-q}}}\|\nabla^2d\|_{L^6})\\
        \le & C \bar\rho^{-\alpha} (  \|\rho u_t\|_{L^q} + \bar \rho \|\nabla u\|_{L^2}^{\frac{6(q-1)}{5q-6}} \|\nabla  u\|_{W^{1,q}}^{\frac{4q-6}{5q-6}}+\|\nabla d\|_{L^3}^{\frac{2}{q}}\|\nabla^2 d\|_{L^6}^{2-\frac{2}{q}})\\
        \leq & \frac12 \| \nabla u\|_{W^{1,q}} + C \bar\rho^{-\alpha}  \|\rho u_t\|_{L^q} + C \bar \rho^{(1-\alpha)\frac{5q-6}{q}} \|\nabla u\|_{L^2}^{\frac{6(q-1)}{q}}+ C \bar\rho^{-\alpha} \|\nabla d\|_{L^3}^{\frac{2}{q}}\|\nabla^3 d\|_{L^2}^{2-\frac{2}{q}} \\
        \leq & C \bar\rho^{-\alpha}  \|\rho u_t\|_{L^q} + C \bar \rho^{(1-\alpha)\frac{5q-6}{q}} \|\nabla u\|_{L^2}^{\frac{6(q-1)}{q}}+ C \bar\rho^{-\alpha} \|\nabla d\|_{L^3}^{\frac{2}{q}}\|\nabla^3 d\|_{L^2}^{2-\frac{2}{q}}.
		\end{aligned}
	\end{equation}    
\end{proof}

Now we are ready to deal with an estimate to $\mathcal{E}_u(T)$.
\begin{lemma}\label{L_2}
There exists a positive constant $\Lambda_1$ such that 
\begin{equation}
\mathcal{E}_u(T)\le 2\mathcal{E}_u(0),
\end{equation}
and
\begin{equation}\label{tdu}
\sup_{t\in[0,T]}t\bar\rho^{\alpha}\|\nabla u \|_{L^2}^2
+\int_0^{T}t \|\sqrt{\rho} u_t \|_{L^2}^2 dt \leq C\bar\rho ,
\end{equation}
% \begin{equation}\label{tdd}
% \sup_{t\in[0,T]}t \|\nabla^2d\|_{L^2}^2
% +\int_0^{T}t \|\nabla^3d\|_{L^2}^2 dt \leq C \|\nabla d_0\|_{L^3}^2,
% \end{equation}
provided $\bar \rho\ge \Lambda_1= \Lambda_1(\Omega,C_0,\mu,\nu, \lambda, \alpha, \|\nabla \rho_0\|_{L^q}, \| u_0\|_{H^1}, \|\nabla^2 d_0\|_{L^2})$.
\end{lemma}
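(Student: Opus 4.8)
The plan is to run a Hoff-type higher order energy estimate obtained by testing the momentum equation \eqref{ins}$_2$ with $u_t$. Using $\mathrm{div}\,u=0$ and $u|_{\partial\Omega}=0$ the pressure term drops out, the viscous term yields $\mu\frac{d}{dt}\int\rho^\alpha|\mathcal{D}(u)|^2\,dx$ together with the commutator $\mu\int\partial_t(\rho^\alpha)|\mathcal{D}(u)|^2\,dx$, and with the mass equation ($\partial_t\rho^\alpha=-u\cdot\nabla\rho^\alpha$) one arrives at
\begin{multline*}
\mu\frac{d}{dt}\int\rho^\alpha|\mathcal{D}(u)|^2\,dx+\int\rho|u_t|^2\,dx
=-\int\rho u\cdot\nabla u\cdot u_t\,dx-\mu\int (u\cdot\nabla\rho^\alpha)\,|\mathcal{D}(u)|^2\,dx\\
-\nu\int\mathrm{div}(\nabla d\odot\nabla d)\cdot u_t\,dx .
\end{multline*}
The first two terms on the right are the usual Hoff terms: after H\"older, the Gagliardo--Nirenberg inequality (Lemma \ref{G-N}) and Poincar\'e's inequality they are dominated by $\varepsilon\|\sqrt{\rho}u_t\|_{L^2}^2$ plus monomials of the type $C\bar\rho^{a}\|\nabla u\|_{L^2}^{b}\|u\|_{H^2}^{c}$, into which I insert the elliptic bound \eqref{H2} for $\|u\|_{H^2}$ and apply Young's inequality. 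After this every surviving term is either reabsorbed into $\varepsilon\|\sqrt{\rho}u_t\|_{L^2}^2$ or carries a strictly negative power of $\bar\rho$ --- this is exactly where \eqref{vis-c} is used --- and, on the pieces coming from the $\nabla d$ part of \eqref{H2}, an extra factor $\|\nabla d\|_{L^3}$ or $\|\nabla^3 d\|_{L^2}$ that is controlled by \eqref{basic-est} and \eqref{d2d}.

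The genuinely new difficulty, which is the main obstacle, is the director forcing $-\nu\int\mathrm{div}(\nabla d\odot\nabla d)\cdot u_t\,dx=\nu\int(\nabla d\odot\nabla d):\nabla u_t\,dx$, which carries $\nabla u_t$. The key is to integrate by parts in time, writing it as $\nu\frac{d}{dt}\int(\nabla d\odot\nabla d):\nabla u\,dx-2\nu\int(\nabla d_t\odot\nabla d):\nabla u\,dx$; the first piece is moved to the left, producing the corrected functional
\begin{equation*}
\mathcal{G}(t):=\mu\int\rho^\alpha|\mathcal{D}(u)|^2\,dx-\nu\int(\nabla d\odot\nabla d):\nabla u\,dx .
\end{equation*}
Since $\big|\nu\int(\nabla d\odot\nabla d):\nabla u\big|\le C\|\nabla d\|_{L^3}\|\nabla^2 d\|_{L^2}\|\nabla u\|_{L^2}$, with $\|\nabla d\|_{L^3}^2\le2\delta$ and $\sup_{[0,T]}\|\nabla^2 d\|_{L^2}^2\le C\|\nabla^2 d_0\|_{L^2}^2$ from \eqref{d2d}, this correction is a small perturbation and $\mathcal{G}(t)$ stays comparable with $\bar\rho^{\alpha}\|\nabla u\|_{L^2}^2$. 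In the second piece I eliminate $d_t$ via the director equation \eqref{ins}$_3$, $\nabla d_t=\lambda\nabla(\Delta d+|\nabla d|^2 d)-\nabla(u\cdot\nabla d)$, and estimate the resulting trilinear integrals with H\"older and Gagliardo--Nirenberg, always peeling off the small factor $\|\nabla d\|_{L^\infty_tL^3_x}\le\sqrt{2\delta}$ (or $\|\nabla d_0\|_{L^3}$, through \eqref{basic-est}); whenever $\|\nabla^2 u\|_{L^2}$ or $\|\nabla^3 d\|_{L^2}$ shows up I invoke \eqref{H2} and \eqref{d2d} and use Young's inequality so that precisely what is needed goes into $\varepsilon\|\sqrt{\rho}u_t\|_{L^2}^2$ and $\varepsilon\|\nabla^3 d\|_{L^2}^2$ while the remainder keeps a negative power of $\bar\rho$. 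Keeping careful track of the $\bar\rho$-exponents through these Young splittings is the only real bookkeeping obstacle.

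Collecting everything gives a differential inequality
\begin{equation*}
\frac{d}{dt}\mathcal{G}+\tfrac12\|\sqrt{\rho}u_t\|_{L^2}^2\le K(t)\,\mathcal{G}+R(t),
\end{equation*}
where, using the a priori bounds \eqref{a1}, the basic energy \eqref{basic-est-0} (which gives $\int_0^T\|\nabla u\|_{L^2}^2\,dt\le C\bar\rho^{1-\alpha}$) and \eqref{basic-est}, \eqref{d2d}, \eqref{tdd}, one checks that $\int_0^T K\,dt\to0$ and $\int_0^T R\,dt=o(\mathcal{E}_u(0))$ as $\bar\rho\to\infty$, uniformly in $T$. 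Gronwall's inequality then yields $\sup_{[0,T]}\mathcal{G}+\tfrac12\int_0^T\|\sqrt{\rho}u_t\|_{L^2}^2\,dt\le(1+o(1))\mathcal{G}(0)+o(\mathcal{E}_u(0))$; since $\mathcal{G}(0)$ is comparable to $\mathcal{E}_u(0)$ and $\mathcal{G}(t)$ to $\bar\rho^\alpha\|\nabla u\|_{L^2}^2$, this closes the bootstrap and gives $\mathcal{E}_u(T)\le2\mathcal{E}_u(0)$ once $\bar\rho\ge\Lambda_1$. Finally \eqref{tdu} follows from the same inequality multiplied by $t$: from $\frac{d}{dt}(t\mathcal{G})+\tfrac{t}{2}\|\sqrt{\rho}u_t\|_{L^2}^2\le\mathcal{G}+t\,K(t)\mathcal{G}+t\,R(t)$, integrating in time, bounding $\int_0^T\mathcal{G}\,dt\le C\bar\rho^{\alpha}\int_0^T\|\nabla u\|_{L^2}^2\,dt\le C\bar\rho$ by \eqref{basic-est-0}, and controlling the $t$-weighted error by the time-weighted director bound \eqref{tdd}; the independence of the final constant on $T$ is precisely the ``time-independent estimate'' mentioned in the introduction, which ultimately rests on the exponential decay of the basic energy on the bounded domain $\Omega$.
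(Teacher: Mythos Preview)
Your proposal is correct and follows essentially the same route as the paper: test the momentum equation with $u_t$, integrate the director forcing by parts in time to absorb $\nu\int(\nabla d\odot\nabla d):\nabla u$ into the energy functional, replace $\nabla d_t$ via \eqref{ins}$_3$, feed in the elliptic bound \eqref{H2} and the director estimates \eqref{basic-est}, \eqref{d2d}, \eqref{tdd}, and close with Gronwall (then repeat with a factor $t$ for \eqref{tdu}). The only cosmetic differences are that the paper bounds the cross term via $\|\nabla d\|_{L^4}^4\le C\|\nabla d\|_{L^2}\|\nabla^2 d\|_{L^2}^3$ rather than $\|\nabla d\|_{L^3}\|\nabla^2 d\|_{L^2}\|\nabla u\|_{L^2}$, and closes the first estimate by direct integration of the differential inequality (using the a~priori bound $\mathcal{E}_u(T)\le 3\mathcal{E}_u(0)$) rather than phrasing it as a Gronwall step; the bookkeeping of $\bar\rho$-exponents is otherwise identical.
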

\begin{proof}
	Multiplying \eqref{ins}$_2$ by $u_t$, and integrating by parts, we have that 
	\begin{equation}\label{51}
		\begin{aligned}
			&\frac{d}{dt}\int \mu\rho^\alpha |\mathcal{D}(u)|^2dx+  \int\rho |u_t|^2dx \\
			&= -\int  \rho u \cdot \nabla u \cdot u_t dx+ \int  \mu(\rho^\alpha)_t |\mathcal{D}(u)|^2 dx-\nu \int\mathrm{div}(\nabla d\odot\nabla d)\cdot u_t dx.
		\end{aligned}
	\end{equation}
	It follows from H\"older and Sobolev inequalities that 
	\begin{equation}\label{52}
        \begin{aligned}
		&\int  \rho u \cdot \nabla u \cdot u_t dx
        \leq C\bar\rho^\frac{1}{2}\|\sqrt{\rho} u_t\|_{L^2}\|u\|_{L^6}\|\nabla u\|_{L^3} \\
        &\leq C\bar\rho^\frac{1}{2}\|\sqrt{\rho} u_t\|_{L^2}\|\nabla u\|_{L^2}^\frac{3}{2}\|\nabla u\|_{H^1}^\frac{1}{2}\\
        &\leq C\bar\rho^\frac{1}{2}\|\sqrt{\rho} u_t\|_{L^2}\|\nabla u\|_{L^2}^\frac{3}{2}(\bar\rho^{\frac{1}{2}-\alpha}\|\sqrt{\rho} u_t\|_{L^2}
        +\bar\rho^{2-2\alpha}\|\nabla u\|_{L^2}^3+\bar\rho^{-\alpha}\|\nabla d\|_{L^3}\|\nabla^3d\|_{L^2})^\frac{1}{2}\\
        &\leq \frac{1}{16}\|\sqrt{\rho} u_t\|_{L^2}^2
        +C(\bar\rho^{3-2\alpha}+\bar\rho^{2-2\alpha})\|\nabla u\|_{L^2}^6+C\|\nabla d\|_{L^3}^2\|\nabla^3d\|_{L^2}^2.
        \end{aligned}
	\end{equation}
Using the fact that
\begin{equation}\label{rhoat}
    \partial_t(\rho^\alpha)+u\cdot\nabla\rho^\alpha=0,
\end{equation}
due to \eqref{ins}$_1$ and \eqref{ins}$_4$, which together with \eqref{d2uu} yields
\begin{equation}\label{53}
\begin{aligned}
&\int\mu(\rho^\alpha)_t |\mathcal{D}(u)|^2 dx\le C \bar \rho^{\alpha-1} \int  \left|\nabla \rho \cdot u\right||\nabla u|^2 dx\\
&\le C \bar \rho^{\alpha-1} \|\nabla \rho \|_{L^q}\|u\|_{L^6} \|\nabla u\|_{L^{\frac{12q}{5q-6}}}^2\\
&\le C\bar{\rho}^{\alpha-1} \|\nabla \rho \|_{L^q}\|\nabla u\|_{L^2}^{\frac{5q-6}{2q}} \|\nabla u\|_{H^1}^{\frac{q+6}{2q
}}\\
&\le C\mathcal{E}_\rho(0)\bar{\rho}^{\alpha-1} \|\nabla u\|_{L^2}^{\frac{5q-6}{2q}} (\bar\rho^{\frac{1}{2}-\alpha} \|\sqrt{\rho} u_t\|_{L^2
}
+\bar\rho^{2-2\alpha}\|\nabla u\|_{L^2}^3+\bar\rho^{-\alpha}\|\nabla d\|_{L^3}\|\nabla^3d\|_{L^2} )^{\frac{q+6}{2q
}}\\
&\leq \frac{1}{16} \|\sqrt{\rho} u_t\|_{L^2}^2 + C \bar \rho^{\frac{6}{q}(1-\alpha)} \|\nabla u\|_{L^2}^{4+\frac{6}{q
}}\\
&+C\left(\bar\rho^{-\frac{q+6}{3q-6}+\frac{12-2q}{3q-6}(1-\alpha)}+\bar\rho^{-\frac{2q+12}{3q-6}+\frac{12-2q}{3q-6}(1-\alpha)}\right)
\|\nabla u\|_{L^2}^\frac{2(5q-6)}{3(q-2)
}
+C\|\nabla d\|_{L^3}^2\|\nabla^3d\|_{L^2}^2.
\end{aligned}
\end{equation}

Using integration by parts and $\eqref{ins}_3$ yield that
\begin{equation}
\begin{aligned}
    &-\int\mathrm{div}(\nabla d\odot\nabla d)\cdot u_t dx=\int \nabla d\odot\nabla d:\nabla u_t dx
\\
    &=\frac{d}{dt}\int\nabla d\odot\nabla d:\nabla u dx-2\int\nabla \partial_td\odot\nabla d:\mathcal{D}(u) dx
\\
    &\leq \frac{d}{dt}\int\nabla d\odot\nabla d:\nabla u dx+\int|\nabla u|^2|\nabla d|^2 dx+\int|u||\nabla^2 d||\nabla d||\nabla u| dx
\\
    &\quad +\int|\nabla\Delta d||\nabla d||\nabla u| dx+\int|\nabla d|^2|\nabla^2d||\nabla u| dx+\int|\nabla d|^4|\nabla u| dx\\
    &=\frac{d}{dt}\int\nabla d\odot\nabla d:\nabla u dx+\sum_{i=1}^5I_i.
\end{aligned}    
\end{equation}
It follows from H\"older's inequality, Gagliardo-Nirenberg inequality, and \eqref{H2} that
\begin{align}\label{I-1}
    I_1&\leq \|\nabla d\|_{L^3}^2\|\nabla u\|_{L^6}^2\leq C\|\nabla d\|_{L^3}^2\|\nabla u\|_{H^1}^2\nonumber\\
    &\leq C\bar\rho^{1-2\alpha}\|\nabla d\|_{L^3}^2\|\sqrt{\rho} u_t\|_{L^2}^2+C(\|\nabla d\|_{L^3}^2\bar\rho^{4-4\alpha}\|\nabla u\|_{L^2}^6+\bar\rho^{-2\alpha}\|\nabla d\|_{L^3}^4\|\nabla^3d\|_{L^2}^2),
\end{align}
\begin{equation}
    \begin{aligned}
        I_2&\leq \|u\|_{L^6}\|\nabla^2 d\|_{L^3}\|\nabla d\|_{L^3}\|\nabla u\|_{L^6}
        \leq C\|\nabla u\|_{L^2}\|\nabla^3 d\|_{L^2}^{\frac{2}{3}}\|\nabla d\|_{L^3}^\frac{4}{3}\|\nabla u\|_{H^1}\\
        &\leq \frac{1}{16}\|\sqrt{\rho}u_t\|_{L^2}^2 
        +C\|\nabla d\|_{L^3}^2\|\nabla^3d\|_{L^2}^2
        +(\bar\rho^{3-3\alpha}\|\nabla d\|_{L^3} +\bar\rho^{3-6\alpha}\|\nabla d\|_{L^3}^4)\|\nabla u\|_{L^2}^6,
    \end{aligned}
\end{equation} 
\begin{equation}
   \begin{aligned}
        I_3&\leq \|\nabla\Delta d\|_{L^2}\|\nabla d\|_{L^3}\|\nabla u\|_{L^6}
        \leq C\|\nabla^3 d\|_{L^2}\|\nabla d\|_{L^3}\|\nabla u\|_{H^1}\\
        &\leq \frac{1}{16}\|\sqrt{\rho}u_t\|_{L^2}^2
        +C\bar\rho^{-\alpha}\|\nabla d\|_{L^3}^2\|\nabla^3d\|_{L^2}^2
        +C\bar\rho^{4-3\alpha}\|\nabla u\|_{L^2}^6.
   \end{aligned}
\end{equation}
Similarly, 
\begin{equation}
\begin{aligned}
    I_4&\leq \|\nabla d\|_{L^3}^2\|\nabla^2 d\|_{L^6}\|\nabla u\|_{L^6}
    \leq C\|\nabla d\|_{L^3}^2\|\nabla^3 d\|_{L^2}\|\nabla u\|_{H^1}\\
    &\leq \frac{1}{16}\|\sqrt{\rho}u_t\|_{L^2}^2
    +C\bar\rho^{-\alpha}\|\nabla d\|_{L^3}^2\|\nabla^3d\|_{L^2}^2+C\bar\rho^{4-3\alpha}\|\nabla u\|_{L^2}^6,
\end{aligned}
\end{equation}
and
\begin{equation}\label{I-5}
    \begin{aligned}
        I_5&\leq \|\nabla d\|_{L^\infty}^\frac{3}{2}\|\nabla d\|_{L^3}^\frac{5}{2}\|\nabla u\|_{L^6}
        \leq C\|\nabla d\|_{L^3}^3\|\nabla^3d\|_{L^2}\|\nabla u\|_{H^1}\\
        &\leq \frac{1}{16}\|\sqrt{\rho}u_t\|_{L^2}^2+C\bar\rho^{-\alpha}\|\nabla d\|_{L^3}^2\|\nabla^3d\|_{L^2}^2+C\bar\rho^{4-3\alpha}\|\nabla u\|_{L^2}^6,
    \end{aligned}
\end{equation}
where we have used \eqref{GN-d-2}.

Then \eqref{a1} together with \eqref{51}-\eqref{I-5} implies that 
\begin{equation}\label{t1}
\begin{aligned}
&\frac{d}{dt}\left(\int \mu\rho^\alpha |\mathcal{D}(u)|^2dx - \lambda \int\nabla d\odot\nabla d:\nabla u dx \right)+  \frac{3}{8}\int\rho |u_t|^2dx\\
&\leq C\bar\rho^{1-2\alpha}\|\nabla d\|_{L^3}^2\|\sqrt{\rho} u_t\|_{L^2}^2+C(\bar\rho^{3-2\alpha}+\bar \rho^{\frac{6}{q}(1-\alpha)}+\bar\rho^{-\frac{q+6}{3q-6}+\frac{12-2q}{3q-6}(1-\alpha)})\|\nabla u\|_{L^2}^4\\
&\quad +C\|\nabla d\|_{L^3}^2\|\nabla^3d\|_{L^2}^2\\
&\leq c_5 \bar\rho^{1-2\alpha} \|\sqrt{\rho} u_t\|_{L^2}^2
+C\bar\rho^{A_0}\|\nabla u\|_{L^2}^4
+C \|\nabla^3d\|_{L^2}^2\\
&\leq C\bar\rho^{A_0}\|\nabla u\|_{L^2}^4
+C \|\nabla^3d\|_{L^2}^2,
%&\leq C\|\nabla d\|_{L^3}^2\|\sqrt{\rho} u_t\|_{L^2}^2
%+C\bar\rho^{A_0}\|\nabla u\|_{L^2}^2
%+C\|\nabla d\|_{L^3}^2\|\nabla^3d\|_{L^2}^2,
\end{aligned}
\end{equation}
since $\frac{2(5q-6)}{3(q-2)}\geq 4$ and let
$$\bar\rho> (8c_5)^{\frac{1}{2\alpha-1}},$$ 
and
\begin{equation}
    A_0=\max\left\{3-2\alpha, {-\frac{q+6}{3q-6}+\frac{12-2q}{3q-6}(1-\alpha)} \right\}<2\alpha-1.
\end{equation}

Note that 
\begin{align*}
    \frac{\mu}{2} \bar \rho^{\alpha} \|\nabla u\|_{L^2}^2 - C \bar \rho^{-\alpha} \|\nabla d\|_{L^4}^4
    \le &\int \mu\rho^\alpha |\mathcal{D}(u)|^2dx -\lambda \int\nabla d\odot\nabla d:\nabla u dx \\
    \le &C \bar \rho^{\alpha} \|\nabla u\|_{L^2}^2 + C \bar \rho^{-\alpha} \|\nabla d\|_{L^4}^4\\
    \le &C \bar \rho^{\alpha} \|\nabla u\|_{L^2}^2 + C \bar \rho^{-\alpha} \|\nabla d\|_{L^2} \|\nabla^2 d\|_{L^2}^3.
\end{align*}
Integrating \eqref{t1} with respect to $t$ over $(0, T]$, we can get from \eqref{deg}, \eqref{2.3}, \eqref{basic-est} and \eqref{d2d} that
\begin{equation}
\begin{aligned}
&\sup_{t\in[0,T] } \bar\rho^{\alpha}\|\nabla u \|_{L^2
}^2 +  \int_0^{T}  \|\sqrt{\rho} u_t \|_{L^2}^2dt\\
 &\leq \bar\rho^{\alpha} \|\nabla u_0\|_{L^2}^2+ C\bar\rho^{-\alpha} \sup_{t\in[0,T] }\|\nabla d\|_{L^2} \|\nabla^2 d\|_{L^2}^3\\
 &\quad +C\bar{\rho}^{A_0}\int_0^T\|\nabla u\|_{L^2}^2dt + C\int_{0}^{T} \|\nabla^3d\|_{L^2}^2 dt
\\
 &\leq M\bar\rho^{\alpha}+c_6\bar{\rho}^{A_0+1-\alpha}+c_7\\
 &\leq 2M\bar\rho^{\alpha}  =2\mathcal{E}_u(0),
\end{aligned}
\end{equation}
provided 
\begin{equation}
    M \triangleq \|\nabla u_0\|_{L^2}^2 ,
\end{equation}
and
\begin{equation}
    \bar{\rho}\geq \max\biggl\{(8c_5)^{\frac{1}{2\alpha-1}},\biggl(\frac{c_6}{M}\biggr)^\frac{1}{2\alpha-A_0-1},\biggl(\frac{c_7}{M}\biggr)^\frac{1}{\alpha}\biggr\}\triangleq \Lambda_1.
\end{equation}

Finally, multiplying \eqref{t1} by $t$ and applying Gronwall’s inequality and using \eqref{basic-est}, \eqref{d2d}, and \eqref{tdd}, we obtain
\begin{equation}
\begin{aligned}
&\sup_{t\in[0,T]}t \bar\rho^{\alpha}\|\nabla u \|_{L^2}^2
+\int_0^{T} t\|\sqrt{\rho} u_t \|_{L^2}^2 dt\\
&\leq C \Bigl( \sup_{t\in[0,T]}t \bar\rho^{-\alpha}\|\nabla d\|_{L^2} \|\nabla^2 d\|_{L^2}^3 +\bar \rho^{A_0-2\alpha} \int_{0}^{T}   t \|\nabla d\|_{L^2} \|\nabla^2 d\|_{L^2}^3\|\nabla u \|_{L^2}^2 dt \\
&\quad + \int_{0}^{T}  (\bar \rho^\alpha \|\nabla u \|_{L^2}^2 + \bar \rho^{-\alpha} \|\nabla d\|_{L^2} \|\nabla^2 d\|_{L^2}^3 ) dt  +  \int_{0}^{T} t \|\nabla^3d\|_{L^2}^2 dt \Bigl)
\\
& \quad \cdot\exp{\bigg\{C\bar\rho^{A_0-\alpha}
\int_0^{T}\|\nabla u\|_{L^2}^2dt\bigg\}}\\
&\leq C \Bigl( \sup_{t\in[0,T]} \bar\rho^{-\alpha} t \|\nabla^2 d\|_{L^2}^2 +\bar \rho^{A_0-2\alpha} \sup_{t\in[0,T]} t \|\nabla^2 d\|_{L^2}^2 \int_{0}^{T}   \|\nabla u \|_{L^2}^2 dt \\
&\quad + \int_{0}^{T}  (\bar \rho^\alpha \|\nabla u \|_{L^2}^2 + \bar \rho^{-\alpha} \|\nabla^2 d\|_{L^2}^3 ) dt  +  \int_{0}^{T} t \|\nabla^3d\|_{L^2}^2 dt \Bigl)
\\
& \quad \cdot\exp{\{C\bar\rho^{A_0+1-2\alpha}\}}\\
&\leq C(\bar\rho+1) \cdot\exp{\{C\bar\rho^{A_0+1-2\alpha}\}}\\
&\leq C\bar\rho,
\end{aligned}
\end{equation}
due to $\alpha>1$.
\end{proof}

In order to close the estimate of $\mathcal{E}_\rho$, we need the following time-weight estimates.
\begin{lemma}
    It holds that 
\begin{equation}\label{trut}
    \sup_{0\le t\le T} t \left(\|\sqrt{\rho} u_t \|_{L^2}^2
    +\|\nabla d_t\|_{L^2}^2\right) 
    +\int_{0}^{T} t\left(\bar\rho^\alpha\|\nabla u_t\|^2_{L^2}
    +\|\nabla^2 d_t\|_{L^2}^2\right) dt\\
    \leq C\bar\rho^\alpha,
\end{equation}
and
\begin{equation}\label{ttrut}
    \sup_{0\le t\le T} t^2 \left(\|\sqrt{\rho} u_t \|_{L^2}^2
    +\|\nabla d_t\|_{L^2}^2\right) 
    +\int_{0}^{T} t^2\left(\bar\rho^\alpha\|\nabla u_t\|^2_{L^2}
    +\|\nabla^2 d_t\|_{L^2}^2\right) dt
    \leq C\bar\rho.
\end{equation}
\end{lemma}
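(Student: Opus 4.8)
The goal is to establish the time-weighted estimates \eqref{trut} and \eqref{ttrut} for $u_t$ and $d_t$. The natural approach is to differentiate the momentum equation \eqref{ins}$_2$ and the director equation \eqref{ins}$_3$ in time, test against $u_t$ (resp. $-\Delta d_t$), and run a Gronwall argument with the weights $t$ and $t^2$. I would begin with the director equation since its estimates feed into the momentum estimates. Differentiating $\eqref{ins}_3$ in $t$ gives $d_{tt} + u_t\cdot\nabla d + u\cdot\nabla d_t = \lambda(\Delta d_t + \partial_t(|\nabla d|^2 d))$; multiplying by $-\Delta d_t$ and integrating by parts (the Neumann condition $\partial_n d = 0$ makes boundary terms vanish, hence also $\partial_n d_t = 0$) produces
\[
\frac12\frac{d}{dt}\|\nabla d_t\|_{L^2}^2 + \lambda\|\Delta d_t\|_{L^2}^2 \le C\big(\|\nabla u_t\|_{L^2}\|\nabla d\|_{L^3} + \text{lower-order}\big)\|\Delta d_t\|_{L^2} + \cdots,
\]
where the troublesome coupling term $\int \nabla u_t \cdot (\text{stuff}) \,\|\nabla d_t\|$-type contributions are absorbed using the smallness of $\|\nabla d\|_{L^3}\le 2\delta$ (from \eqref{a1}) and $\eta$-Young together with elliptic estimates $\|\nabla^2 d_t\|_{L^2}\le C\|\Delta d_t\|_{L^2}$ from Lemma \ref{Nuemann_elliptic}. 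The terms involving $\partial_t(|\nabla d|^2 d) = 2(\nabla d : \nabla d_t)d + |\nabla d|^2 d_t$ are controlled by Gagliardo–Nirenberg using the already-established bounds \eqref{d2d}, \eqref{tdd} on $\nabla^2 d$, $\nabla^3 d$.

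For the velocity, I would differentiate $\eqref{ins}_2$ in $t$, test with $u_t$, and use $\partial_t(\rho^\alpha) + u\cdot\nabla\rho^\alpha = 0$ (equation \eqref{rhoat}) together with the transport equation for $\rho$ to rewrite the time derivatives of the density-dependent viscosity and the inertial term. This yields
\[
\frac12\frac{d}{dt}\|\sqrt\rho u_t\|_{L^2}^2 + \mu\bar\rho^\alpha\|\nabla u_t\|_{L^2}^2 \lesssim \text{(commutator terms)} + \text{(director forcing terms)},
\]
where the commutator terms are the standard ones appearing in Hoff-type estimates (e.g. $\int\rho u_t\cdot\nabla u\cdot u_t$, $\int\rho u\cdot\nabla u_t\cdot u_t$, $\int\mu(\rho^\alpha)_t\,\mathcal{D}(u):\mathcal{D}(u_t)$), handled by Hölder, Sobolev, and crucially the $H^2$-bound \eqref{H2} and $W^{2,q}$-bound \eqref{W2q} on $u$, with large powers of $\bar\rho^{-1}$ providing the necessary smallness. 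The new feature relative to pure Navier–Stokes is the term $-\nu\partial_t\,\mathrm{div}(\nabla d\odot\nabla d)$ tested against $u_t$, i.e. $\nu\int \partial_t(\nabla d\odot\nabla d):\nabla u_t = 2\nu\int (\nabla d_t\odot\nabla d):\nabla u_t$, which I would bound by $C\|\nabla d_t\|_{L^3}\|\nabla d\|_{L^6}\|\nabla u_t\|_{L^2}$, using $\eta$-Young to absorb $\|\nabla u_t\|_{L^2}^2$ into the left side, and controlling $\|\nabla d_t\|_{L^3}$ by interpolation between $\|\nabla d_t\|_{L^2}$ and $\|\nabla^2 d_t\|_{L^2}$ — this is why the $d_t$-estimate must be coupled to the $u_t$-estimate rather than done separately. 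Adding the two differentiated estimates, multiplying by $t$ (resp. $t^2$), and integrating: the boundary term at $t=0$ vanishes because of the weight, and the integral $\int_0^T \|\sqrt\rho u_t\|_{L^2}^2\,dt$ on the right is already controlled by $\mathcal{E}_u(T)\le 2\mathcal{E}_u(0)\le 2M\bar\rho^\alpha$ from Lemma \ref{L_2}, while $\int_0^T t\|\sqrt\rho u_t\|_{L^2}^2\,dt\le C\bar\rho$ from \eqref{tdu}. The appearance of $\bar\rho^\alpha$ on the right of \eqref{trut} versus merely $\bar\rho$ on the right of \eqref{ttrut} is precisely the gain from the extra power of $t$, exploiting that $\int_0^T t\,(\cdots)\,dt$ picks up the better decay.

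The main obstacle, I expect, is bookkeeping the powers of $\bar\rho$ so that every term on the right-hand side of the differentiated estimate is dominated by a negative power of $\bar\rho$ times the controllable quantities (a constant times $\bar\rho^\alpha$ for \eqref{trut}, a constant for \eqref{ttrut}) once $\bar\rho \ge \Lambda_0$; this requires carefully tracking, in each nonlinear term, the exponent coming from $\bar\rho^{-\alpha}$ or $\bar\rho^{1-\alpha}$ factors in \eqref{H2}, \eqref{W2q} against the exponents from $\mathcal{E}_u(0)=M\bar\rho^\alpha$, and using $\alpha>1$ to ensure all residual powers are non-positive. A secondary difficulty is that to estimate $\|\nabla u_t\|_{L^2}$ one needs $u_t$ as a solution to a Stokes-type system (so that Lemma \ref{stokes-e} or at least coercivity $\|\nabla u_t\|_{L^2}^2 \le C\bar\rho^{-\alpha}\int\rho^\alpha|\mathcal{D}(u_t)|^2$ via Korn) applies cleanly; one should check that $u_t\in H^1_0$ with $\mathrm{div}\,u_t = 0$ so that Korn's inequality and the pressure-elimination go through. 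Finally, the term $\int \nabla\partial_t d\odot\nabla d : \mathcal{D}(u)$ that already appeared (unweighted) in Lemma \ref{L_2} must now be re-examined with the extra $t$-weight and with $\nabla d_t$ estimates available, but this is routine once the director estimate is in hand.
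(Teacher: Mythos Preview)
Your proposal is correct and follows essentially the same route as the paper: differentiate \eqref{ins}$_2$ and \eqref{ins}$_3$ in time, test against $u_t$ and $-\Delta d_t$ respectively, couple the two resulting inequalities, absorb the cross terms using the smallness of $\|\nabla d\|_{L^3}$ and large $\bar\rho$, and run Gronwall with the weights $t$ and $t^2$. Two minor tactical points where the paper differs: for the coupling term $\int (\nabla d_t\odot\nabla d):\nabla u_t$ the paper uses the H\"older split $\|\nabla u_t\|_{L^2}\|\nabla d\|_{L^3}\|\nabla d_t\|_{L^6}$ (placing the small $L^3$ norm on $\nabla d$ rather than interpolating $\|\nabla d_t\|_{L^3}$), and the paper also derives an auxiliary pointwise elliptic bound $\|\nabla^3 d\|_{L^2}^2 \le C\|\nabla d_t\|_{L^2}^2 + C\|\nabla d\|_{L^3}^2\|\nabla u\|_{L^2}^6$ to convert the $\|\nabla^3 d\|_{L^2}$ contributions coming from \eqref{H2} back into quantities controllable by Gronwall; you would also need a separate bound on $\int_0^T \|\nabla d_t\|_{L^2}^2\,dt$ (obtained directly from \eqref{ins}$_3$ and \eqref{d2d}) when the weight $t$ is differentiated.
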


\begin{proof}
Take the $t$-derivative of the momentum equations, multiply the resulting equation by $tu_t$, and after integrating by parts, we have that 
\begin{equation}\label{k2}
    \begin{aligned}
        &\frac{t}{2} \frac{d}{dt} \int \rho |u_t|^2 dx + 2\mu t \int\rho^\alpha |\mathcal{D}(u)_t|^2dx\\
        =&-t\int  \rho u_t \cdot \nabla u \cdot u_t dx 
        +t\int  \dv(\rho u) |u_t|^2 dx 
        +t\int  \dv(\rho u) u \cdot \nabla u \cdot u_t dx  \\
        &+2\mu t \int (\rho^\alpha)_t \mathcal{D}(u): \mathcal{D}(u)_tdx
        +2t \nu \int \mathcal{D}_{ij}(u_t)\partial_id\cdot\partial_jd_tdx\\
        =&\sum_{j=1}^{5} J_i.
    \end{aligned}
\end{equation}

It follows from H\"older and Sobolev inequalities that
\begin{equation}
    \begin{aligned}
        J_1\leq& C t \int |\rho \nabla u|  |u_t|^2 dx  \\
        \le &C t\bar\rho\|\nabla u\|_{L^2}\|u_t\|_{L^4}^2 \\
        \le &Ct  \bar\rho^{\frac34} \|\nabla u\|_{L^2} \|\sqrt{\rho} u_t \|_{L^2}^{\frac 12}  \| \nabla u_t \|_{L^2}^{\frac 32}\\
        \le & \frac{\mu}{8} t \bar\rho^{\alpha} \|\nabla u_t\|_{L^2}^2 
        +Ct\bar \rho^{3-3\alpha}\|\sqrt{\rho} u_t \|_{L^2}^2 \|\nabla u\|_{L^2}^4\\
        \le & \frac{\mu}{8} t \bar\rho^{\alpha} \|\nabla u_t\|_{L^2}^2 
        +Ct \|\sqrt{\rho} u_t \|_{L^2}^2 \|\nabla u\|_{L^2}^2,
    \end{aligned}
\end{equation}
% \begin{equation}
%     \begin{aligned}
%         J_1\leq& C t \int |\rho \nabla u|  |u_t|^2 dx  \\
%         \le &C t\bar \rho^{\frac12}\|\sqrt{\rho} u_t \|_{L^2} \|\nabla u\|_{L^3}\| u_t \|_{L^6} \\
%         \le &Ct  \bar\rho^{\frac12}\|\sqrt{\rho} u_t \|_{L^2} \|\nabla u\|_{L^2}^{\frac12} \|\nabla u\|_{L^6}^{\frac12} \| \nabla u_t \|_{L^2}\\
%         \le & \frac{\mu}{8} t \bar\rho^{\alpha} \|\nabla u_t\|_{L^2}^2 
%         +Ct\bar \rho^{1-\alpha}\|\sqrt{\rho} u_t \|_{L^2}^2 \|\nabla u\|_{L^2} \|\nabla u\|_{L^6}\\
%         \leq & \frac{\mu}{8} t \bar\rho^{\alpha} \|\nabla u_t\|_{L^2}^2 
%         +Ct\bar\rho^{-\alpha} \|\nabla u\|_{L^2} \|\sqrt{\rho} u_t \|_{L^2}^2\|\nabla u\|_{H^1}\\
%         \leq & \frac{\mu}{8} t \bar\rho^{\alpha} \|\nabla u_t\|_{L^2}^2
%         +Ct\bar\rho^{-\alpha} \|\nabla u\|_{L^2} \|\sqrt{\rho} u_t \|_{L^2}^2\\
%         &\cdot(\bar\rho^{\frac{1}{2}-\alpha}\|\sqrt{\rho} u_t\|_{L^2}
%         +\bar\rho^{2-2\alpha}\|\nabla u\|_{L^2}^3\color{red}{+\bar\rho^{-\alpha}\|\nabla d\|_{L^3}\|\nabla^3d\|_{L^2}})\\
%         \leq & \frac{\mu}{8} t \bar\rho^{\alpha} \|\nabla u_t\|_{L^2}^2
%         +Ct\bar\rho^{\frac{1}{2}-2\alpha} \|\nabla u\|_{L^2} \|\sqrt{\rho} u_t \|_{L^2}^3
%         +Ct\bar\rho^{2-3\alpha} \|\nabla u\|_{L^2}^4 \|\sqrt{\rho} u_t \|_{L^2}^2\\
%         &\color{red}{\quad+Ct\bar\rho^{-2\alpha}\|\nabla u\|_{L^2}^2\|\sqrt{\rho}u_t\|_{L^2}^4
%         +Ct\bar\rho^{-2\alpha}\|\nabla d\|_{L^3}^2\|\nabla^3d\|_{L^2}^2},
%     \end{aligned}
% \end{equation}
also $J_2$ can be estimated as follows
\begin{equation}
    \begin{aligned}
        J_2\leq& Ct\int|\nabla\rho\cdot u||u_t|^2dx\\
        \leq &C t\|\nabla \rho\|_{L^q} \|u \|_{L^{\frac{2q}{q-2}}} \| u_t \|_{L^4}^{2}\\
        \leq &Ct \bar \rho^{-\frac14} \|\nabla \rho\|_{L^q} \|u \|_{L^{2}}^{\frac{q-3}{q}} \|\nabla u \|_{L^{2}}^{\frac{3}{q}} \|\sqrt{\rho} u_t \|_{L^2}^{\frac12} \|\nabla u_t \|_{L^2}^{\frac32}  \\
        \le & \frac{\mu}{8} t \bar\rho^{\alpha} \|\nabla u_t\|_{L^2}^2 
        +C t \bar \rho^{-3\alpha-1} \|\nabla \rho\|_{L^q}^4 \|\nabla u \|_{L^{2}}^{\frac{12}{q}} \|\sqrt{\rho} u_t \|_{L^2}^2 \\
        \leq & \frac{\mu}{8} t \bar\rho^{\alpha} \|\nabla u_t\|_{L^2}^2 
        + Ct \mathcal{E}_\rho(0)^4 \bar\rho^{-3\alpha-1} \|\nabla u \|_{L^2}^\frac{12}{q} \|\sqrt{\rho} u_t \|_{L^2}^2\\
        \leq & \frac{\mu}{8} t \bar\rho^{\alpha} \|\nabla u_t\|_{L^2}^2 
        + Ct \|\nabla u \|_{L^2}^2 \|\sqrt{\rho} u_t \|_{L^2}^2.
    \end{aligned}
\end{equation}
Similarly, $J_3$ can be bounded by
\begin{equation}
    \begin{aligned}
        J_3\leq & t\int |\nabla \rho \cdot u| |u| | \nabla u| |u_t| dx\\
        \leq & Ct \|\nabla \rho\|_{L^q}  \|u\|_{L^6}^2  \|\nabla u\|_{L^{\frac{2q}{q-2}}} \|u_t\|_{L^6} \\
        \leq & \frac{\mu}{8} t \bar\rho^{\alpha} \|\nabla u_t\|_{L^2}^2 + Ct \bar\rho^{ - \alpha} \|\nabla \rho \|_{L^q}^2 \|\nabla u\|_{L^2}^\frac{6(q-1)}{q}\|\nabla u\|_{H^1}^\frac{6}{q}\\
        \leq & \frac{\mu}{8} t \bar\rho^{\alpha} \|\nabla u_t\|_{L^2}^2 
        +C\mathcal{E}_\rho(0)^2t \bar\rho^{ - \alpha} \|\nabla u\|_{L^2}^\frac{6(q-1)}{q}\\
        &\cdot(\bar\rho^{\frac{1}{2}-\alpha}\|\sqrt{\rho} u_t\|_{L^2}
        +\bar\rho^{2-2\alpha}\|\nabla u\|_{L^2}^3+\bar\rho^{-\alpha}\|\nabla d\|_{L^3}\|\nabla^3d\|_{L^2})^\frac{6}{q}\\
        \leq & \frac{\mu}{8} t \bar\rho^{\alpha} \|\nabla u_t\|_{L^2}^2
        +Ct\rho^{ -\alpha+2(1-\alpha)\frac{6}{q}}
        \|\nabla u\|_{L^2}^{6+\frac{12}{q}}\\
        &+Ct\bar\rho^{ -\alpha+\frac{6}{q}(\frac{1}{2}-\alpha)}\left(\|\nabla u\|_{L^2}^2
        \|\sqrt{\rho} u_t\|_{L^2}^2+\|\nabla u\|_{L^2}^{6+\frac{6}{q-3}}\right)\\
        &+Ct\bar\rho^{-\alpha(1+\frac{6}{q})}\|\nabla u\|_{L^2}^{6+\frac{6}{q-3}}
        +Ct\bar\rho^{-\alpha(1+\frac{6}{q})} \|\nabla u\|_{L^2}^{2}\|\nabla d\|_{L^3}^2\|\nabla ^3d\|_{L^2}^2\\
        \leq & \frac{\mu}{8} t \bar\rho^{\alpha} \|\nabla u_t\|_{L^2}^2
        +Ct  \bar \rho^{ -\alpha} \|\nabla u\|_{L^2}^{8} +Ct \|\nabla u\|_{L^2}^2
        \|\sqrt{\rho} u_t\|_{L^2}^2 \\
        &  +Ct\bar\rho^{-\alpha(1+\frac{6}{q})} \|\nabla u\|_{L^2}^{2}\|\nabla d\|_{L^3}^2\|\nabla ^3d\|_{L^2}^2.
    \end{aligned}
\end{equation}
Using \eqref{rhoat}, we have
\begin{equation}\label{J4}
    \begin{aligned}
        J_4\leq & Ct\bar\rho^{\alpha-1}\int |\nabla \rho \cdot u| |\nabla  u||\nabla  u_t| dx\\
        \leq & Ct\bar \rho^{\alpha-1}  \|\nabla \rho\|_{L^q}  \|u\|_{L^\frac{3q}{q-3}}  \|\nabla u\|_{L^6} \|\nabla u_t\|_{L^2}\\
        \leq & \frac{\mu}{8} t\bar \rho^{\alpha} \|\nabla u_t\|_{L^2}^2
        +Ct\bar \rho^{\alpha-2} \|\nabla \rho \|_{L^q}^2  \|\nabla u\|_{L^2}^{3-\frac{6}{q}} \|\nabla u\|_{L^6}^{1+\frac{6}{q}}\\
        \leq & \frac{\mu}{8}t \bar\rho^{\alpha} \|\nabla u_t\|_{L^2}^2 
        + C\mathcal{E}_\rho(0)^2 t\bar \rho^{\alpha-2}  \|\nabla u\|_{L^2}^{3-\frac{6}{q}} \|\nabla u\|_{H^1}^{1+\frac{6}{q}}\\
        \leq & \frac{\mu}{8}t \bar \rho^{\alpha} \|\nabla u_t\|_{L^2}^2 
        +Ct \bar\rho^{\alpha-2}  \|\nabla u\|_{L^2}^{3-\frac{6}{q}}\\
        &\cdot(\bar\rho^{\frac{1}{2}-\alpha}\|\sqrt{\rho} u_t\|_{L^2}
        +\bar\rho^{2-2\alpha}\|\nabla u\|_{L^2}^3+\bar\rho^{-\alpha}\|\nabla d\|_{L^3}\|\nabla^3d\|_{L^2})^{1+\frac{6}{q}}\\
        \leq & \frac{\mu}{8}t \bar \rho^{\alpha} \|\nabla u_t\|_{L^2}^2  
        +Ct \bar\rho^{-\frac{3}{2}+(\frac{1}{2}-\alpha)\frac{6}{q}}\|\nabla u\|_{L^2}^{3-\frac{6}{q}}\|\sqrt{\rho} u_t\|_{L^2}^{1+\frac{6}{q}}
        +Ct \bar\rho^{-\alpha+2(1-\alpha)\frac{6}{q}}\|\nabla u\|_{L^2}^{6+\frac{12}{q}}\\
        %&\quad+\color{red}{Ct\bar\rho^{-2-\frac{6\alpha}{q}}(\|\nabla d\|_{L^3}^2\|\nabla u\|_{L^2}^2\|\nabla^3d\|_{L^2}^2+\|\nabla d\|_{L^3}^4\|\nabla^3d\|_{L^2}^4)}\\
        &+Ct\bar\rho^{-2-\frac{6\alpha}{q}}\|\nabla u\|_{L^2}^{3-\frac{6}{q}} \|\nabla d\|_{L^3}^{1+\frac{6}{q}}\|\nabla^3d\|_{L^2}^{1+\frac{6}{q}}\\
        \leq & \frac{\mu}{8}t \bar \rho^{\alpha} \|\nabla u_t\|_{L^2}^2  
        +Ct \bar \rho^{ -\alpha} \|\nabla u\|_{L^2}^{8}\\
        &+Ct \bar\rho^{-\frac{3}{2}+(\frac{1}{2}-\alpha)\frac{6}{q}}\|\nabla u\|_{L^2}^{3-\frac{6}{q}}\|\sqrt{\rho} u_t\|_{L^2}^{1+\frac{6}{q}}
        +Ct\bar\rho^{-2-\frac{6\alpha}{q}}\|\nabla u\|_{L^2}^{3-\frac{6}{q}} \|\nabla^3d\|_{L^2}^{1+\frac{6}{q}}.
    \end{aligned}
\end{equation}
Next,
\begin{align}\label{J5}
    J_5&\leq Ct\int |\nabla u_t| |\nabla d||\nabla  d_t| dx
    \leq Ct\|\nabla u_t\|_{L^2}\|\nabla d\|_{L^3}\|\nabla d_t\|_{L^6}\nonumber\\
    &\leq \frac{1}{16}t\bar \rho^{\alpha} \|\nabla u_t\|_{L^2}^2  
    +Ct\bar\rho^{-\alpha}\|\nabla d\|_{L^3}^2\| \nabla^2 d_t\|_{L^2}^2.
\end{align}

Operating $\partial_t$ to $\eqref{ins}_3$, and multiplying it by $-\Delta d_t$, by integrating the resulting equality
by parts, we get
\begin{equation}\label{ddt}
    \begin{aligned}
    &\frac{1}{2}\frac{d}{dt}\|\nabla d_t\|_{L^2}^2
    +\lambda \|\Delta d_t\|_{L^2}^2\\
    &=\int u_t\cdot\nabla d\cdot \Delta d_t dx
    -\int \nabla u:\nabla d_t\odot\nabla d_t dx-\lambda \int(|\nabla d|^2d)_t\cdot \Delta d_t dx\\
    &\leq C (\|u_t\|_{L^6}\|\nabla d\|_{L^3}+\|\nabla d_t\|_{L^6}\|\nabla d\|_{L^3}+\|\nabla d\|_{L^6}^2\|d_t\|_{L^6})\|\Delta d_t\|_{L^2}+C\|\nabla u\|_{L^2}\|\nabla d_t\|_{L^4}^2\\
    &\leq C (\|\nabla u_t\|_{L^2}\|\nabla d\|_{L^3}+\|\nabla^2 d_t\|_{L^2}\|\nabla d\|_{L^3}+\|\nabla^2 d\|_{L^2}^2\|d_t\|_{H^1})\|\Delta d_t\|_{L^2}\\
    &\quad +C\|\nabla u\|_{L^2}\|\nabla d_t\|_{L^2}^{\frac{1}{2}}\|\nabla^2d_t\|_{L^2}^\frac{3}{2}\\
    &\leq \eta\|\nabla^2 d_t\|_{L^2}^2
    +\frac{1}{4}\| \Delta d_{t}\|_{L^2}^2+C\|\nabla d\|_{L^3}^2\|\nabla u_t\|_{L^2}^2
    +c_8\|\nabla d\|_{L^3}^2\|\Delta d_t\|_{L^2}^2\\
    &\quad+C(\|\nabla u\|_{L^2}^4+\|\nabla^2 d\|_{L^2}^4)\|\nabla d_t\|_{L^2}^2\\
    &\quad +C\|\nabla^2 d\|_{L^2}^6+C\|\nabla d\|_{L^3}^2(\|\nabla u\|_{L^2}^2+\|\nabla^2d\|_{L^2}^2)\|\nabla^2 d\|_{L^2}^4,
\end{aligned}
\end{equation}
where we have used that 
\begin{align*}
    \int u\cdot\nabla d_t\cdot\Delta d_t dx=-\int\partial_j u_i\partial_id_t\cdot\partial_j d_t dx-\int (u\cdot \nabla) \partial_j d_t\cdot \partial_j d_t dx=-\int \nabla u:\nabla d_t\odot\nabla d_t dx,
\end{align*}
and
\begin{align*}
    \|d_t\|_{L^2}^2&\leq C(\|\Delta d\|_{L^2}^2+\|u\cdot\nabla d\|_{L^2}^2+\|\nabla d\|_{L^4}^4)\\
    &\leq C\|\nabla ^2 d\|_{L^2}^2+C\|\nabla d\|_{L^3}^2(\|\nabla u\|_{L^2}^2+\|\nabla^2d\|_{L^2}^2).
\end{align*}
By elliptic estimates $\|\nabla d_t\|_{L^2}^2\leq C_3^2\|\Delta d_t\|_{L^2}^2$, which follows from Lemma \ref{Nuemann_elliptic}, and choosing $\eta$ small enough and 
$$\delta<\frac{1}{4c_8},$$ 
we have
\begin{equation}\label{d_t-2nd}
    \begin{aligned}
      \frac{d}{dt}\|\nabla d_t\|_{L^2}^2
    +\|\nabla^2 d_{t}\|_{L^2}^2\leq &C\|\nabla d\|_{L^3}^2\|\nabla u_t\|_{L^2}^2+C(\|\nabla u\|_{L^2}^4+\|\nabla^2 d\|_{L^2}^4)\|\nabla d_t\|_{L^2}^2\\
    &+C\|\nabla^2 d\|_{L^2}^6+C\|\nabla d\|_{L^3}^2(\|\nabla u\|_{L^2}^2+\|\nabla^2d\|_{L^2}^2)\|\nabla^2 d\|_{L^2}^4\\
    \leq &C\|\nabla d\|_{L^3}^2\|\nabla u_t\|_{L^2}^2+C(\|\nabla u\|_{L^2}^4+\|\nabla^2 d\|_{L^2}^4)\|\nabla d_t\|_{L^2}^2\\
    &+C(\|\nabla u\|_{L^2}^2+\|\nabla^2d\|_{L^2}^2)\|\nabla^2 d\|_{L^2}^4.   
    \end{aligned}
\end{equation}

Combining all the above estimates \eqref{k2}–\eqref{J5}, \eqref{d_t-2nd}
\eqref{a1} and \eqref{basic-est}, we deduce
\begin{equation}\label{k3}
    \begin{aligned}
        &\frac{d}{dt}t \int \left(\rho |u_t|^2+ \vert\nabla d_t\vert^2\right) dx 
        + t \int\left(\bar\rho^\alpha|\nabla u_t|^2 + |\nabla^2 d_t|^2\right)dx\\
        % \leq & \int \left(\rho |u_t|^2 +\vert\nabla d_t\vert^2\right) dx \\
        % & +Ct\bar\rho^{3-3\alpha} \|\nabla u\|_{L^2}^4 \|\sqrt{\rho} u_t \|_{L^2}^2 + Ct  \bar\rho^{-3\alpha-1} \|\nabla u \|_{L^2}^2 \|\sqrt{\rho} u_t \|_{L^2}^2\\
        % &+Ct\bar\rho^{ -\alpha+\frac{6}{q}(\frac{1}{2}-\alpha)}\|\nabla u\|_{L^2}^2
        % \|\sqrt{\rho} u_t\|_{L^2}^2\\
        % &+Ct\rho^{ -\alpha} \|\nabla u\|_{L^2}^{8}
        % +Ct \bar\rho^{-\frac{3}{2}+(\frac{1}{2}-\alpha)\frac{6}{q}}\|\nabla u\|_{L^2}^{3-\frac{6}{q}}\|\sqrt{\rho} u_t\|_{L^2}^{1+\frac{6}{q}}\\
        % & + {\color{red} Ct\bar\rho^{-\alpha(1+\frac{6}{q})} \|\nabla u\|_{L^2}^{2}\|\nabla d\|_{L^3}^2\|\nabla ^3d\|_{L^2}^2}
        % +\color{red}{Ct\bar\rho^{-2-\frac{6\alpha}{q}}\|\nabla u\|_{L^2}^{3-\frac{6}{q}} \|\nabla^3d\|_{L^2}^{1+\frac{6}{q}} }\\
        % &+Ct\bar\rho^{-\alpha}\|\nabla d\|_{L^3}^2\| \nabla^2 d_t\|_{L^2}^2\\
        % &+ Ct\|\nabla d\|_{L^3}^2\|\nabla u_t\|_{L^2}^2+Ct(\|\nabla u\|_{L^2}^4+\|\nabla^2 d\|_{L^2}^4)\|\nabla d_t\|_{L^2}^2\\
        % &+ Ct(\|\nabla u\|_{L^2}^2+\|\nabla^2d\|_{L^2}^2)\|\nabla^2 d\|_{L^2}^4\\
        \leq &\int \left(\rho |u_t|^2+\vert\nabla d_t\vert^2\right) dx 
        + Ct \|\nabla u\|_{L^2}^2 \|\sqrt{\rho} u_t \|_{L^2}^2 + Ct \bar \rho^{ -\alpha} \|\nabla u\|_{L^2}^8\\
        &+ Ct \bar\rho^{-\frac{3}{2}+(\frac{1}{2}-\alpha)\frac{6}{q}}\|\nabla u\|_{L^2}^{3-\frac{6}{q}}\|\sqrt{\rho} u_t\|_{L^2}^{1+\frac{6}{q}}\\
        & + Ct\bar\rho^{-\alpha(1+\frac{6}{q})} \|\nabla u\|_{L^2}^{2}\|\nabla d\|_{L^3}^2\|\nabla ^3d\|_{L^2}^2
        +Ct\bar\rho^{-2-\frac{6\alpha}{q}}\|\nabla u\|_{L^2}^{3-\frac{6}{q}} \|\nabla^3d\|_{L^2}^{1+\frac{6}{q}}\\
        &+Ct\bar\rho^{-\alpha} \left( \|\nabla^2 d_t\|_{L^2}^2 +  \bar \rho^{\alpha} \|\nabla u_t\|_{L^2}^2\right)\\
        & +Ct(\|\nabla u\|_{L^2}^4+\|\nabla^2 d\|_{L^2}^4)\|\nabla d_t\|_{L^2}^2 + Ct(\|\nabla u\|_{L^2}^2+\|\nabla^2d\|_{L^2}^2)\|\nabla^2 d\|_{L^2}^4,
    \end{aligned}
\end{equation}
due to $q\in (3,6)$, \eqref{a1}. 

Since $w=\nabla d$ satisfies the following elliptic system
\begin{equation}\label{dd_ellip}
	\begin{cases}
		-\lambda \Delta w=\nabla d_t-\nabla(u\cdot\nabla d)+ \lambda \nabla(\vert\nabla d\vert^2 d),\\
		w\cdot n=0,~\mathrm{curl}w\times n=0,
	\end{cases} 
\end{equation}
By using elliptic estimates Lemma \ref{slip_estimate} for $\eqref{dd_ellip}$, we have
\begin{equation*}
    \begin{aligned}
        \|\nabla^3d\|_{L^2}^2
        &\leq C\left(\|\nabla d_t\|_{L^2}^2+\|\nabla u\nabla d\|_{L^2}^2+\|u\cdot\nabla \nabla d\|_{L^2}^2+\|\nabla d\nabla^2 d\|_{L^2}^2+\|\nabla d\|_{L^6}^6\right)\\
        &\leq C\|\nabla d_t\|_{L^2}^2+C(\|\nabla u\|_{L^2}^2\|\nabla d\|_{L^\infty}^2+\|u\|_{L^6}^2\|\nabla ^2d\|_{L^3}^2)\\
        &\quad+C(\|\nabla d\|_{L^3}^2\|\nabla^2d\|_{L^6}^2+\|\nabla d\|_{L^\infty}^3\|\nabla d\|_{L^3}^3)\\
        &\leq C\|\nabla d_t\|_{L^2}^2+C\|\nabla u\|_{L^2}^2\|\nabla d\|_{L^3}^\frac{2}{3}\|\nabla^3d\|_{L^2}^\frac{4}{3}+C(\|\nabla d\|_{L^3}^2+\|\nabla d\|_{L^3}^4)\|\nabla^3d\|_{L^2}^2\\
        &\leq \frac{1}{4}\|\nabla^3d\|_{L^2}^2+ c_9 (\|\nabla d\|_{L^3}^2+\|\nabla d\|_{L^3}^4)\|\nabla^3d\|_{L^2}^2+C\|\nabla d_t\|_{L^2}^2\\
        &\quad+C\|\nabla d\|_{L^3}^2\|\nabla u\|_{L^2}^6.
    \end{aligned}
\end{equation*}
Therefore, by choosing 
$$\delta<\frac{1}{4c_9},$$
we have
\begin{equation}\label{d-3rd}
    \begin{aligned}
      \|\nabla^3d\|_{L^2}^2&\leq C\|\nabla d_t\|_{L^2}^2+C\|\nabla d\|_{L^3}^2\|\nabla u\|_{L^2}^6.  
    \end{aligned}
\end{equation}

Plugging \eqref{d-3rd} into \eqref{k3}, we have
\begin{equation}
    \begin{aligned}
        &\frac{d}{dt}t \int \left(\rho |u_t|^2+ \vert\nabla d_t\vert^2\right) dx 
        + t \int\left(\bar\rho^\alpha|\nabla u_t|^2 + |\nabla^2 d_t|^2\right)dx\\
        \leq &\int \left(\rho |u_t|^2+\vert\nabla d_t\vert^2\right) dx 
        + Ct \|\nabla u\|_{L^2}^2 \|\sqrt{\rho} u_t \|_{L^2}^2 + Ct \bar \rho^{ -\alpha} \|\nabla u\|_{L^2}^8\\
        &+ Ct \bar\rho^{-\frac{3}{2}+(\frac{1}{2}-\alpha)\frac{6}{q}}\|\nabla u\|_{L^2}^{3-\frac{6}{q}}\|\sqrt{\rho} u_t\|_{L^2}^{1+\frac{6}{q}}\\
        & + Ct\bar\rho^{-\alpha(1+\frac{6}{q})} \|\nabla u\|_{L^2}^{2}\|\nabla d\|_{L^3}^2\|\nabla ^3d\|_{L^2}^2\\
        &+Ct\bar\rho^{-2-\frac{6\alpha}{q}}\|\nabla u\|_{L^2}^{3-\frac{6}{q}} \|\nabla^3d\|_{L^2}^{\frac{6}{q}-1}
        \left(\|\nabla d_t\|_{L^2}^2+\|\nabla d\|_{L^3}^2\|\nabla u\|_{L^2}^6\right)\\
        &+Ct\bar\rho^{-\alpha} \left( \| \nabla^2 d_t\|_{L^2}^2 + \bar \rho^{\alpha} \|\nabla u_t\|_{L^2}^2\right)\\
        & +Ct(\|\nabla u\|_{L^2}^4+\|\nabla^2 d\|_{L^2}^4)\|\nabla d_t\|_{L^2}^2 + Ct(\|\nabla u\|_{L^2}^2+\|\nabla^2d\|_{L^2}^2)\|\nabla^2 d\|_{L^2}^4\\
        \leq &\int \left(\rho |u_t|^2+\vert\nabla d_t\vert^2\right) dx 
        + Ct \|\nabla u\|_{L^2}^2 \|\sqrt{\rho} u_t \|_{L^2}^2 + Ct\bar \rho^{ -\alpha} \|\nabla u\|_{L^2}^8\\
        &+ Ct \bar\rho^{-\frac{3}{2}+(\frac{1}{2}-\alpha)\frac{6}{q}}\|\nabla u\|_{L^2}^{3-\frac{6}{q}}\|\sqrt{\rho} u_t\|_{L^2}^{1+\frac{6}{q}} + c_{10} t\bar\rho^{-\alpha} \left( \|\nabla^2 d_t\|_{L^2}^2 +  \bar \rho^{\alpha} \|\nabla u_t\|_{L^2}^2\right)\\
        & +  Ct\bar\rho^{-\alpha(1+\frac{6}{q})} \|\nabla u\|_{L^2}^{2} \|\nabla d\|_{L^3}^2\|\nabla ^3d\|_{L^2}^2\\
        %+Ct\bar\rho^{-2-\frac{6\alpha}{q}}\|\nabla u\|_{L^2}^{9-\frac{6}{q}} \|\nabla^3d\|_{L^2}^{\frac{6}{q}-1} \\
        & +Ct(\|\nabla u\|_{L^2}^4+\|\nabla^2 d\|_{L^2}^4 + \|\nabla u\|_{L^2}^{2} + \|\nabla^3d\|_{L^2}^{2} )\|\nabla d_t\|_{L^2}^2 \\
        & + Ct(\|\nabla u\|_{L^2}^2+\|\nabla^2d\|_{L^2}^2)\|\nabla^2 d\|_{L^2}^4,
    \end{aligned}
\end{equation}
where we have used 
\begin{align*}
   \bar\rho^{-2-\frac{6\alpha}{q}}\|\nabla u\|_{L^2}^{9-\frac{6}{q}} \|\nabla^3d\|_{L^2}^{\frac{6}{q}-1} &\le C\bar\rho^{-\alpha(1+\frac{6}{q})} \|\nabla u\|_{L^2}^{2} \|\nabla d\|_{L^3}^2\|\nabla ^3d\|_{L^2}^2 + C \bar \rho^{-\alpha}\|\nabla u\|_{L^2}^{\frac{4(5q-6)}{3(q-2)}},
\end{align*}
due to $\frac{4(5q-6)}{3(q-2)}\ge 8$ for $q\in(3,6)$ and $\bar \rho>1$.
Therefore, choosing 
$$\bar \rho > \frac{1}{(4c_{10})^{\alpha}},$$
Gronwall's inequality yields
\begin{equation}
    \begin{aligned}
        &\sup_{0\le t\le T} t \left(\|\sqrt{\rho} u_t \|_{L^2}^2
        + \|\nabla d_t\|_{L^2}^2\right) 
        +\int_{0}^{T} t\left(\bar\rho^\alpha\|\nabla u_t\|^2_{L^2}
        + \|\Delta d_t\|_{L^2}^2\right) dt\\
        \leq & \bigg\{C\int_0^T t \bar\rho^{ -\alpha } 
        \|\nabla u\|_{L^2}^8  dt 
        +C\int_0^T \left(\|\sqrt{\rho} u_t\|_{L^2}^2
        + \|\nabla d_t\|_{L^2}^2 \right)dt\\
        &+C\bar\rho^{-\alpha}\int_0^T t\bar\rho^{-\alpha(1+\frac{6}{q})} \|\nabla u\|_{L^2}^{2}\|\nabla d\|_{L^3}^2\|\nabla ^3d\|_{L^2}^2 dt \\ 
        %+\color{red}{t\bar\rho^{-2-\frac{6\alpha}{q}}\|\nabla u\|_{L^2}^{9-\frac{6}{q}} \|\nabla^3d\|_{L^2}^{\frac{6}{q}-1} } dt  \\
        &+C\int_0^T t(\|\nabla u\|_{L^2}^2+\|\nabla^2d\|_{L^2}^2)\|\nabla^2 d\|_{L^2}^4 dt \bigg\} \\
        &\cdot\exp{\left\{ \int_0^T %\bar\rho^{\frac{1}{2}-2\alpha} \|\nabla u\|_{L^2} \|\sqrt{\rho} u_t \|_{L^2}+
        \bar\rho^{-\frac{3}{2}+(\frac{1}{2}-\alpha)\frac{6}{q}}\|\nabla u\|_{L^2}^{3-\frac{6}{q}}\|\sqrt{\rho} u_t\|_{L^2}^{\frac{6}{q}-1}dt\right\}}\\
        &\cdot\exp{\left\{ \int_0^T \|\nabla u\|_{L^2}^2 +\|\nabla^2 d\|_{L^2}^4 + \|\nabla^3 d\|_{L^2}^2 dt\right\}}.
    \end{aligned}
\end{equation}

Taking advantage of \eqref{a1}, \eqref{basic-est} and \eqref{tdu}, we obtain
\begin{equation}\label{111}
    \begin{aligned}
        &\int_0^T t \bar\rho^{ -\alpha}
        %(\bar\rho^{ -\alpha+2(1-\alpha)\frac{6}{q}}+\bar\rho^{ -\frac{1}{2}+\frac{6}{q}(\frac{1}{2}-\alpha)} +{\color{red}\bar\rho^{-\frac{(6+q)\alpha}{q-3}}+\bar\rho^{-2-\frac{6\alpha}{q}}})
        \|\nabla u\|_{L^2}^8 dt\\
        \leq& C %(\bar\rho^{ -\alpha+2(1-\alpha)\frac{6}{q}}
        %+\bar\rho^{ -\frac{1}{2}+\frac{6}{q}(\frac{1}{2}-\alpha)}
        %+{\color{red}\bar\rho^{-\frac{(6+q)\alpha}{q-3}}
        %+\bar\rho^{-2-\frac{6\alpha}{q}}})\\
        \bar\rho^{ -\alpha} \cdot\sup_{t\in[0,T]}\|\nabla u \|_{L^2}^4
        \cdot\sup_{t\in[0,T]}t\|\nabla u \|_{L^2}^2
        \cdot\int_0^T\|\nabla u \|_{L^2}^2dt\\
        \leq& C \bar\rho^{ -\alpha + 2(1-\alpha)}.
        %(\bar\rho^{ -\alpha+2(1-\alpha)\frac{6+q}{q}}+\bar\rho^{ -\frac{1}{2}+\frac{6}{q}(\frac{1}{2}-\alpha)+2(1-\alpha)}+{\color{red}\bar\rho^{-\frac{(6+q)\alpha}{q-3}+2(1-\alpha)}+\bar\rho^{-2-\frac{6\alpha}{q}+2(1-\alpha)}}).
    \end{aligned}
\end{equation}
Using \eqref{dd}, we obtain
\begin{equation}\label{ndt_decay}
\begin{aligned}
    \int_0^T\|\nabla d_t \|_{L^2}^2dt
    \leq &\int_0^T\left(\|\nabla^3 d\|_{L^2}^2
    +\|\nabla(u\cdot\nabla d))\|_{L^2}^2
    +\|\nabla(\vert\nabla d\vert^2 d)\|_{L^2}^2\right)dt\\
    \leq &\int_0^T\bigl(\|\nabla^3 d\|_{L^2}^2
    +\|\nabla u\|_{L^6}^2\|\nabla d\|_{L^3}^2
    +\| u\|_{L^6}^2\|\nabla^2 d\|_{L^3}^2\\
    &\|\nabla d\|_{L^\infty}^3\|\nabla d\|_{L^3}^3
    +\|\nabla d\|_{L^3}^2\|\nabla^2d\|_{L^6}^2\bigr)dt\\
    \leq &\int_0^T \Bigl\{ C(1+\|\nabla u\|_{L^2}^2+\|\nabla d\|_{L^3}^2+\|\nabla d\|_{L^3}^4)\|\nabla^3d\|_{L^2}^2\\
    &+C\bigl(\|\nabla d\|_{L^3}^2\|\sqrt{\rho}u_t\|_{L^2}^2
    +\|\nabla d\|_{L^3}^2\|\nabla u\|_{L^2}^6 \bigr) \Bigr\}dt\\
    \leq & C(\delta)\bar\rho^\alpha.
\end{aligned}
\end{equation}
% Since
% \begin{equation}
% \begin{aligned}
%     \|\nabla^2 d\|_{L^2}^4&\leq C\|\nabla^3 d\|_{L^2}^{\frac{4}{3}}\|\nabla d\|_{L^3}^\frac{8}{3}
%     \leq C\left(\|\nabla d\|_{L^3}^2\|\nabla^3 d\|_{L^2}^2+\|\nabla d\|_{L^3}^4\right)\\
%     &\leq C\left(\|\nabla d\|_{L^3}^2+\|\nabla d\|_{L^2}^2\right)\|\nabla^3 d\|_{L^2}^2\\
%     &\leq C(\vert\Omega\vert)\|\nabla d\|_{L^3}^2\|\nabla^3 d\|_{L^2}^2,
% \end{aligned}
% \end{equation}
% due to
% \begin{equation}\label{GN-d-22}
%    \|\nabla^2 d\|_{L^2}\leq C\|\nabla^3 d\|_{L^2}^{\frac{1}{3}}\|\nabla d\|_{L^3}^\frac{2}{3}.
% \end{equation}
It follows from \eqref{a1} and \eqref{tdu} that
\begin{equation}
    \begin{aligned}
        &\int_0^T t\bar\rho^{-\alpha(1+\frac{6}{q})} \|\nabla u\|_{L^2}^{2}\|\nabla d\|_{L^3}^2\|\nabla ^3d\|_{L^2}^2 dt \\
        %+\color{red}{t\bar\rho^{-2-\frac{6\alpha}{q}}\|\nabla u\|_{L^2}^{9-\frac{6}{q}} \|\nabla^3d\|_{L^2}^{\frac{6}{q}-1} } dt\\
        \le&  C \bar\rho^{-\alpha(1+\frac{6}{q})}  \sup_{t\in[0,T]} \|\nabla u\|_{L^2}^{2} \int_0^T  t \|\nabla ^3d\|_{L^2}^2 dt\\  %+ t\bar\rho^{-2-\frac{6\alpha}{q}} \int_0^T  (\|\nabla u\|_{L^2}^{8} + \|\nabla u\|_{L^2}^{2} \|\nabla^3d\|_{L^2}^2 ) dt\\
        \le & C \bar\rho^{-\alpha(1+\frac{6}{q})}   \|\nabla d_0\|_{L^3}^2, %+ t\bar\rho^{-2-\frac{6\alpha}{q}-\alpha} \int_0^T (\|\nabla u\|_{L^2}^{8} + \|\nabla u\|_{L^2}^{2} \|\nabla^3d\|_{L^2}^2 ) dt\\
    \end{aligned}
\end{equation}
and 
\begin{equation}
    \begin{aligned}
        &\int_0^T t(\|\nabla u\|_{L^2}^2+\|\nabla^2d\|_{L^2}^2)\|\nabla^2 d\|_{L^2}^4 dt \\
        \le & \sup_{0\le t\le T} t(\|\nabla u\|_{L^2}^2+\|\nabla^2d\|_{L^2}^2) \sup_{0\le t\le T} \|\nabla^2d\|_{L^2}^2 \int_0^T \|\nabla^2 d\|_{L^2}^2 dt\\
        \le & C(\bar \rho^{1-\alpha}+ \|\nabla d_0\|_{L^3}^2) \|\nabla d_0\|_{L^3}^2.
    \end{aligned}
\end{equation}

% \begin{equation}\label{113}
%     \begin{aligned}
%         &\bar\rho^{-\alpha}\int_0^Tt\|\nabla^2 d\|_{L^2}^6+t\|\nabla d\|_{L^3}^2(\|\nabla u\|_{L^2}^2+\|\nabla^2d\|_{L^2}^2)\|\nabla^2 d\|_{L^2}^2 dt\\
%         \leq& C\bar\rho^{-\alpha}\int_0^T t(\|\nabla^2 d\|_{L^2}^4+\|\nabla^2 d\|_{L^2}^6+\|\nabla u\|_{L^2}^2\|\nabla^2 d\|_{L^2}^2) dt\\
%         \leq& C\bar\rho^{-\alpha}\int_0^T t\left(\|\nabla d\|_{L^3}^2+\|\nabla d\|_{L^3}^4\right)\|\nabla^3 d\|_{L^2}^2dt
%         +C\bar\rho^{-\alpha} \sup_{t\in[0,T]}t\|\nabla^2 d \|_{L^2}^2
%         \cdot\int_0^T\|\nabla u \|_{L^2}^2dt\\
%         \leq & C(\bar\rho^{1-\alpha}+\bar\rho^{2-2\alpha}),
%     \end{aligned}
% \end{equation}
% and
% \begin{equation}
%     \begin{aligned}
%         &\int_0^T t\bar\rho^{-\alpha}\|\nabla d\|_{L^3}^2\|\nabla ^3d\|_{L^2}^2
%         +Ct\bar\rho^{-2-\frac{6\alpha}{q}}\|\nabla d\|_{L^3}^6\|\nabla u\|_{L^2}^6\|\nabla ^3d\|_{L^2}^2 dt\\
%         \leq& C(\delta^2\bar\rho^{-\alpha}+\delta^6\bar\rho^{-2-\frac{6\alpha}{q}})\int_0^T t\|\nabla ^3d\|_{L^2}^2 dt\\
%         \leq & C\delta^2(\bar\rho^{1-\alpha}+\bar\rho^{-1-\frac{6\alpha}{q}}).
%     \end{aligned}
% \end{equation}
H\"older's inequality yields
% \begin{equation}
%     \begin{aligned}
%         &\int_0^T \bar\rho^{\frac{1}{2}-2\alpha} \|\nabla u\|_{L^2} \|\sqrt{\rho} u_t \|_{L^2} dt\\
%         \leq & \bar\rho^{\frac{1}{2}-2\alpha}
%         \left(\int_0^T\|\nabla u\|_{L^2}^2dt\right)^\frac{1}{2}
%         \left(\int_0^T\|\sqrt{\rho} u_t\|_{L^2}^2dt\right)^\frac{1}{2}\\
%         \leq& C\bar\rho^{1-2\alpha},
%     \end{aligned}
% \end{equation}
% and
\begin{equation}
    \begin{aligned}
        &\int_0^T \bar\rho^{-\frac{3}{2}+(\frac{1}{2}-\alpha)\frac{6}{q}}\|\nabla u\|_{L^2}^{3-\frac{6}{q}}\|\sqrt{\rho} u_t\|_{L^2}^{\frac{6}{q}-1} dt\\
        \leq& \bar\rho^{-\frac{3}{2}+(\frac{1}{2}-\alpha)\frac{6}{q}}
        \left(\int_0^T\|\nabla u\|_{L^2}^2dt\right)^\frac{3q-6}{2q}
        \left(\int_0^T\|\sqrt{\rho} u_t\|_{L^2}^2dt\right)^\frac{6-q}{2q}\\
        \leq& C\bar\rho^{-2\alpha}.
    \end{aligned}
\end{equation}
The last term
% \begin{equation}
%     \begin{aligned}
%         &\bar\rho^{-2-\frac{6\alpha}{q}}\int_0^T \left( {\color{red} \|\nabla u\|_{L^2}^{3-\frac{6}{q}} \|\nabla^3d\|_{L^2}^{\frac{6}{q}-1} }\right)dt\\
%         \leq &\bar\rho^{-2-\frac{6\alpha}{q}}\int_o^T C\bigl(\delta^2\|\nabla u\|_{L^2}^2
%         +\delta^4\|\nabla ^3d\|_{L^2}^2\bigr)dt\\
%         \leq &C\left(\delta^2\bar\rho^{-1-(1+\frac{6}{q})\alpha}
%         +\delta^4\bar\rho^{-2-(\frac{6}{q}-1)\alpha}\right)\leq \Tilde{C}\delta^2,
%     \end{aligned}
% \end{equation}
%and
\begin{equation}\label{113}
    \begin{aligned}
        &\int_0^T \left(
        \| \nabla u\|_{L^2}^2
        +\|\nabla^2 d\|_{L^2}^4+ \|\nabla^3 d\|_{L^2}^2\right)dt\\
        \leq & C \int_0^T \bigl(\|\nabla u\|_{L^2}^2
        +\|\nabla d\|_{L^3}^2\|\nabla ^3d\|_{L^2}^2 + \|\nabla^3 d\|_{L^2}^2\bigr)dt\\
        \leq &C\left(\bar\rho^{1-\alpha}
        + c_4 \|\nabla^2 d_0\|_{L^2}^2\right)\leq \Tilde{C} ,
    \end{aligned}
\end{equation}
due to $\delta<1<\bar\rho$ .
Hence, collecting all the estimates \eqref{111}-\eqref{113}, one gets
\begin{equation}\label{trut1}
\begin{gathered}
    \sup_{0\le t\le T} t \left(\|\sqrt{\rho} u_t \|_{L^2}^2
    + \|\nabla d_t\|_{L^2}^2\right) 
    +\int_{0}^{T} t\left(\bar\rho^\alpha\|\nabla u_t\|^2_{L^2}
    + \|\Delta d_t\|_{L^2}^2\right) dt\\
    \leq  C(\bar\rho^{-A}+\bar\rho^\alpha)\cdot
    \exp{\{C\bar\rho^{-B}+\Tilde{C}\}}
    \leq C\bar\rho^\alpha.
\end{gathered}
\end{equation}
with some $A,B>0$.

On the other hand, multiplying \eqref{k3} by $t$, one has
\begin{equation}
    \begin{aligned}
    &\frac{d}{dt}(t^2\|\sqrt{\rho}u_t\|_{L^2}^2
    +t^2 \|\nabla d_t\|_{L^2}^2)
    +t^2(\bar\rho^\alpha\|\nabla u_t\|^2
    + \|\nabla^2 d_t\|_{L^2}^2)\\
    &\leq Ct\|\sqrt{\rho}u_t\|_{L^2}^2
    +Ct \|\nabla d_t\|_{L^2}^2 + Ct^2 \|\nabla u\|_{L^2}^2 \|\sqrt{\rho} u_t \|_{L^2}^2 + Ct^2\rho^{ -\alpha} \|\nabla u\|_{L^2}^8\\
        &+ Ct^2 \bar\rho^{-\frac{3}{2}+(\frac{1}{2}-\alpha)\frac{6}{q}}\|\nabla u\|_{L^2}^{3-\frac{6}{q}}\|\sqrt{\rho} u_t\|_{L^2}^{1+\frac{6}{q}} \\%+ c_{10} t^2\bar\rho^{-\alpha} \left( \|\nabla^2 d_t\|_{L^2}^2 +  \bar \rho^{\alpha} \|\nabla u_t\|_{L^2}^2\right)\\
        &+Ct^2 \bar\rho^{-\alpha(1+\frac{6}{q})} \|\nabla u\|_{L^2}^{2} \|\nabla d\|_{L^3}^2\|\nabla ^3d\|_{L^2}^2\\
        %+\color{red}{Ct^2 \bar\rho^{-2-\frac{6\alpha}{q}}\|\nabla u\|_{L^2}^{9-\frac{6}{q}} \|\nabla^3d\|_{L^2}^{\frac{6}{q}-1} } \\
        & +Ct^2(\|\nabla u\|_{L^2}^4+\|\nabla^2 d\|_{L^2}^4 + \|\nabla u\|_{L^2}^{2} + \|\nabla^3d\|_{L^2}^{2} )\|\nabla d_t\|_{L^2}^2 \\
        & + Ct^2 (\|\nabla u\|_{L^2}^2+\|\nabla^2d\|_{L^2}^2)\|\nabla^2 d\|_{L^2}^4,
    % %+{\color{red}Ct^2\bar\rho^{-2\alpha}\|\sqrt{\rho}u_t\|_{L^2}^4}\\
    % &+Ct^2(\rho^{ -\alpha+2(1-\alpha)\frac{6}{q}}+\bar\rho^{ -\frac{1}{2}+\frac{6}{q}(\frac{1}{2}-\alpha)}
    % +{\color{red}\bar\rho^{-\frac{(6+q)\alpha}{q-3}}
    % +\bar\rho^{-2-\frac{6\alpha}{q}}})
    % \|\nabla u\|_{L^2}^8\\
    % &+Ct^2(\bar\rho^{2-3\alpha}+\bar\rho^{ -\frac{1}{2}+\frac{6}{q}(\frac{1}{2}-\alpha)}) \|\nabla u\|_{L^2}^2 \|\sqrt{\rho} u_t \|_{L^2}^2\\
    % &+Ct^2\bar\rho^{\frac{1}{2}-2\alpha} \|\nabla u\|_{L^2} \|\sqrt{\rho} u_t \|_{L^2}^3+Ct \bar\rho^{-\frac{3}{2}+(\frac{1}{2}-\alpha)\frac{6}{q}}\|\nabla u\|_{L^2}^{3-\frac{6}{q}}\|\sqrt{\rho} u_t\|_{L^2}^{1+\frac{6}{q}}\\
    % &+Ct^2\bar\rho^{-\alpha}\|\nabla^2 d\|_{L^2}^6
    % +Ct^2\bar\rho^{-\alpha}\|\nabla d\|_{L^3}^2(\|\nabla u\|_{L^2}^2
    % +\|\nabla^2d\|_{L^2}^2)\|\nabla^2 d\|_{L^2}^2\\
    % &+Ct^2\bar\rho^{-2\alpha}\|\nabla d\|_{L^3}^2\|\nabla ^3d\|_{L^2}^2
    % +Ct^2\bar\rho^{-2-\frac{6\alpha}{q}}\|\nabla d\|_{L^3}^6\|\nabla u\|_{L^2}^6\|\nabla ^3d\|_{L^2}^2\\
    % &+Ct^2{\color{red}\bar\rho^{-2-\frac{6\alpha}{q}}(\|\nabla d\|_{L^3}^2\|\nabla u\|_{L^2}^2}
    % +\|\nabla d\|_{L^3}^4\|\nabla ^3d\|_{L^2}^2)
    % \| \nabla d_t\|_{L^2}^2\\
    % &+Ct^2\bar\rho^{-\alpha}(\|\nabla u\|_{L^2}^4
    % +\|\nabla^2 d\|_{L^2}^4)\| \nabla d_t\|_{L^2}^2.\\
\end{aligned}
\end{equation}
Applying Gronwall’s inequality to arrive at
\begin{equation}
    \begin{aligned}
        &\sup_{0\le t\le T} t^2 \left(\|\sqrt{\rho} u_t \|_{L^2}^2
        + \|\nabla d_t\|_{L^2}^2\right) 
        +\int_{0}^{T} t^2\left(\bar\rho^\alpha\|\nabla u_t\|^2_{L^2}
        + \| \Delta d_t\|_{L^2}^2\right) dt\\
        \leq &  \bigg\{C\int_0^T t^2 \bar\rho^{ -\alpha } 
        \|\nabla u\|_{L^2}^8  dt 
        +C\int_0^T t\left(\|\sqrt{\rho} u_t\|_{L^2}^2
        + \|\nabla d_t\|_{L^2}^2 \right)dt\\
        &+C\int_0^T t^2\bar\rho^{-\alpha(1+\frac{6}{q})} \|\nabla u\|_{L^2}^{2} \|\nabla d\|_{L^3}^2\|\nabla ^3d\|_{L^2}^2 dt\\
        %+\color{red}{t^2 \bar\rho^{-2-\frac{6\alpha}{q}}\|\nabla u\|_{L^2}^{9-\frac{6}{q}} \|\nabla^3d\|_{L^2}^{\frac{6}{q}-1} } dt  \\
        &+C\int_0^T t^2 (\|\nabla u\|_{L^2}^2+\|\nabla^2d\|_{L^2}^2)\|\nabla^2 d\|_{L^2}^4 dt \bigg\} \\
        &\cdot\exp{\{\bar\rho^{-B}+\Tilde{C}\}}.
    \end{aligned}
\end{equation}
\eqref{tdu} together with \eqref{trut} yields
\begin{equation}
    \begin{aligned}
        &\int_0^T t^2\bar\rho^{ -\alpha}
        \|\nabla u\|_{L^2}^8 dt\\
        \leq& C \bar\rho^{ -\alpha} \sup_{t\in[0,T]}\|\nabla u \|_{L^2}^2
        \cdot\sup_{t\in[0,T]}t^2\|\nabla u \|_{L^2}^4
        \cdot\int_0^T\|\nabla u \|_{L^2}^2dt\\
        \leq& C \bar\rho^{ -\alpha+3(1-\alpha)}.
    \end{aligned}
\end{equation}
Similar to \eqref{ndt_decay}, we have
\begin{equation}
\begin{aligned}
    \int_0^T t\|\nabla d_t \|_{L^2}^2dt
    \leq &\int_0^T \Bigl\{ Ct(1+\|\nabla u\|_{L^2}^2+\|\nabla d\|_{L^3}^2+\|\nabla d\|_{L^3}^4)\|\nabla^3d\|_{L^2}^2\\
    &+Ct\bigl(\|\nabla d\|_{L^3}^2\|\sqrt{\rho}u_t\|_{L^2}^2
    +\|\nabla d\|_{L^3}^2\|\nabla u\|_{L^2}^6 \bigr) \Bigr \} dt\\
    \leq & C(\delta)\bar\rho.
\end{aligned}
\end{equation}
And it follows from \eqref{tdu} and \eqref{tdd} that
\begin{equation}
    \begin{aligned}
        &\int_0^T  t^2 \bar\rho^{-\alpha(1+\frac{6}{q})} \|\nabla u\|_{L^2}^{2} \|\nabla d\|_{L^3}^2\|\nabla ^3d\|_{L^2}^2 dt \\
        %+\color{red}{t^2\bar\rho^{-2-\frac{6\alpha}{q}}\|\nabla u\|_{L^2}^{9-\frac{6}{q}} \|\nabla^3d\|_{L^2}^{\frac{6}{q}-1} } dt\\
        \le& C \bar\rho^{-\alpha(1+\frac{6}{q})} \sup_{0\le t\le T} t \|\nabla u\|_{L^2}^2 \int_0^T t \|\nabla^3 d\|_{L^2}^2 dt\\
        \le& C \bar\rho^{-\alpha(1+\frac{6}{q})+1-\alpha},
    \end{aligned}
\end{equation}
and 
\begin{equation}
    \begin{aligned}
        &\int_0^T t^2(\|\nabla u\|_{L^2}^2+\|\nabla^2d\|_{L^2}^2)\|\nabla^2 d\|_{L^2}^4 dt \\
        \le & \sup_{0\le t\le T} t(\|\nabla u\|_{L^2}^2+\|\nabla^2d\|_{L^2}^2) \sup_{0\le t\le T} t \|\nabla^2d\|_{L^2}^2 \int_0^T \|\nabla^2 d\|_{L^2}^2 dt\\
        \le & C(\bar \rho^{1-\alpha}+ \|\nabla d_0\|_{L^3}^2) \|\nabla d_0\|_{L^3}^2.
    \end{aligned}
\end{equation}

% Note that Poincar\'e inequality leads to
% \begin{equation}
%     \|\nabla d\|_{L^3}^2\leq C \|\nabla d\|_{L^2}\|\nabla^2 d\|_{L^2}\leq C\|\nabla^2 d\|_{L^2}^2,
% \end{equation}
% which together with \eqref{a1}, \eqref{a2} and \eqref{tdu} yields
% \begin{equation}\label{114}
%     \begin{aligned}
%         &\bar\rho^{-\alpha}\int_0^T t^2\|\nabla^2 d\|_{L^2}^6
%         +t^2\|\nabla d\|_{L^3}^2(\|\nabla u\|_{L^2}^2
%         +\|\nabla^2d\|_{L^2}^2)\|\nabla^2 d\|_{L^2}^2 dt\\
%         \leq& C\bar\rho^{-\alpha}\int_0^T t^2\left( (\|\nabla d\|_{L^3}^2+\|\nabla d\|_{L^3}^4)\|\nabla^3 d\|_{L^2}^2+\|\nabla u\|_{L^2}^2\|\nabla^2 d\|_{L^2}^4\right) dt\\
%         \leq& C\bar\rho^{-\alpha} \sup_{t\in[0,T]}t\|\nabla^2 d \|_{L^2}^2
%         \cdot\int_0^T t\|\nabla^3 d \|_{L^2}^2dt
%         +C\bar\rho^{-\alpha} \sup_{t\in[0,T]}t^2\|\nabla^2 d \|_{L^2}^4
%         \cdot\int_0^T\|\nabla u \|_{L^2}^2dt\\
%         \leq & C(\bar\rho^{2-2\alpha}+\bar\rho^{3-2\alpha})
%         \leq C\bar\rho,
%     \end{aligned}
% \end{equation}
% and
% \begin{equation}
%     \begin{aligned}
%         &\int_0^T t^2\bar\rho^{-\alpha}\|\nabla d\|_{L^3}^2\|\nabla ^3d\|_{L^2}^2
%         +Ct^2\bar\rho^{-2-\frac{6\alpha}{q}}\|\nabla d\|_{L^3}^6\|\nabla u\|_{L^2}^6\|\nabla ^3d\|_{L^2}^2 dt\\
%         \leq& C(\bar\rho^{-\alpha}
%         +\bar\rho^{-2-\frac{6\alpha}{q}})\sup_{t\in[0,T]}t\|\nabla^2 d \|_{L^2}^2
%         \cdot\int_0^T t\|\nabla ^3d\|_{L^2}^2 dt\\
%         \leq & C(\bar\rho^{2-\alpha}+\bar\rho^{-\frac{6\alpha}{q}})
%         \leq C\bar\rho,
%     \end{aligned}
% \end{equation}
% due to $\alpha>1$.

Hence it follows immediately by \eqref{trut1} that,
\begin{equation}
\begin{gathered}
    \sup_{0\le t\le T} t^2 \left(\|\sqrt{\rho} u_t \|_{L^2}^2+\|\nabla d_t\|_{L^2}^2\right) 
        +\int_{0}^{T} t^2\left(\bar\rho^\alpha\|\nabla u_t\|^2_{L^2}+\|\Delta d_t\|_{L^2}^2\right) dt\\
    \leq C(\bar\rho^{-A_1}+\bar\rho)\cdot
    \exp{\{\bar\rho^{-B_1}+\Tilde{C}\}}
    \leq C\bar\rho,
\end{gathered}
\end{equation}
with some $A_1,B_1>0$.
\end{proof}
Finally, we are about to finish the bound of  $\mathcal{E}_\rho$, the key observation is that $\|\nabla u\|_{L^1_tL^\infty_x}$ is uniformly bounded with respect to time $T$.
\begin{lemma}\label{L_1}
	There exists a positive constant $\Lambda_2$ such that 
	\begin{equation}
		\mathcal{E}_\rho(T)\le  2 \mathcal{E}_\rho(0),
	\end{equation}
	provided $\bar\rho>\Lambda_2=\Lambda_2(\Omega,C_0,\mu,\nu,\lambda, \alpha,\|\nabla\rho_0\|_{L^q}, \| u_0\|_{H^2}, \|\nabla d_0\|_{H^1})$.
\end{lemma}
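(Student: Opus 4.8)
The plan is to exploit the pure-transport structure of the density. Differentiating the mass equation $\eqref{ins}_1$ and using $\dv u=0$ gives
\[
\partial_t\nabla\rho+(u\cdot\nabla)\nabla\rho=-\nabla u\cdot\nabla\rho ,
\]
so that testing against $|\nabla\rho|^{q-2}\nabla\rho$ and integrating yields $\frac{d}{dt}\|\nabla\rho\|_{L^q}\le\|\nabla u\|_{L^\infty}\|\nabla\rho\|_{L^q}$. By Gronwall's inequality $\mathcal E_\rho(T)\le\mathcal E_\rho(0)\exp\!\big(\int_0^T\|\nabla u\|_{L^\infty}\,dt\big)$, so the whole lemma reduces to the \emph{time-independent} bound $\int_0^\infty\|\nabla u\|_{L^\infty}\,dt\le\ln 2$, to be obtained by taking $\bar\rho$ large (the smallness of $\|\nabla d_0\|_{L^3}$ being already fixed in Lemmas~\ref{3d} and~\ref{L_2} and in the time-weighted estimates \eqref{trut}--\eqref{ttrut}).

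To bound $\|\nabla u\|_{L^\infty}$ pointwise in time I would use $\|\nabla u\|_{L^\infty}\le C\|u\|_{W^{2,q}}$ (Sobolev embedding, $q>3$) and insert the density-dependent Stokes estimate \eqref{W2q}. In the three resulting terms the factor $\|\nabla d\|_{L^3}^{2/q}\le(2\delta)^{1/q}$ is harmless, $\|\nabla u\|_{L^2}$ and $\|\nabla^3 d\|_{L^2}$ are already controlled, and the new ingredient is $\|\rho u_t\|_{L^q}$, for which Gagliardo--Nirenberg (Lemma~\ref{G-N}) together with $\bar\rho\le\rho\le C_0\bar\rho$ gives
\[
\|\rho u_t\|_{L^q}\le C\bar\rho\|u_t\|_{L^2}^{1-\theta}\|\nabla u_t\|_{L^2}^{\theta}\le C\bar\rho^{(1+\theta)/2}\|\sqrt\rho u_t\|_{L^2}^{1-\theta}\|\nabla u_t\|_{L^2}^{\theta},\qquad\theta=\tfrac{3(q-2)}{2q}\in(\tfrac12,1).
\]
Thus $\|\nabla u\|_{L^\infty}$ is dominated by a finite sum of terms, each a strictly negative power of $\bar\rho$ times a product of powers of $\|\sqrt\rho u_t\|_{L^2}$, $\|\nabla u_t\|_{L^2}$, $\|\nabla u\|_{L^2}$ and $\|\nabla^3 d\|_{L^2}$.

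I would then split $\int_0^\infty\|\nabla u\|_{L^\infty}\,dt=\int_0^1+\int_1^\infty$. On $[0,1]$ the only issue is $t\to0$: using the $\sqrt t$-weighted bounds \eqref{trut} (hence $\|\sqrt\rho u_t\|_{L^2}\le C\bar\rho^{\alpha/2}t^{-1/2}$ and $\int_0^T t\|\nabla u_t\|_{L^2}^2dt\le C$), \eqref{tdu} and \eqref{d2d}, the time singularities are $t^{-1/2}$ or milder because $\theta<1$ and $2-2/q<2$, so H\"older in time makes the integral finite, and tracking the $\bar\rho$-exponents — where $\alpha>1$ is used to make every prefactor negative — gives $\int_0^1\|\nabla u\|_{L^\infty}\,dt\le C\bar\rho^{-\gamma}+C\|\nabla^2 d_0\|_{L^2}^{\gamma'}$ for some $\gamma,\gamma'>0$. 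On $[1,\infty)$ I would use the $t^2$-weighted bounds \eqref{ttrut} (so $\|\sqrt\rho u_t\|_{L^2}\le C\bar\rho^{1/2}t^{-1}$ and $\int_0^T t^2\|\nabla u_t\|_{L^2}^2dt\le C\bar\rho$), the weighted director bound \eqref{tdd}, and the time-independent energy bound $\int_0^\infty\|\nabla u\|_{L^2}^2dt\le C\bar\rho^{1-\alpha}$ from \eqref{basic-est-0}. The decisive point is that the time exponents are borderline integrable: for instance
\[
\int_1^\infty\|\sqrt\rho u_t\|_{L^2}^{1-\theta}\|\nabla u_t\|_{L^2}^{\theta}dt\le C\bar\rho^{\frac{1-\theta}{2}}\Big(\int_1^\infty t^2\|\nabla u_t\|_{L^2}^2dt\Big)^{\theta/2}\Big(\int_1^\infty t^{-\frac{2}{2-\theta}}dt\Big)^{\frac{2-\theta}{2}}
\]
converges because $\tfrac{2}{2-\theta}>1$ for all $q\in(3,6)$, and similarly $\int_1^\infty\|\nabla^3 d\|_{L^2}^{2-2/q}dt\le\big(\int_1^\infty t\|\nabla^3 d\|_{L^2}^2dt\big)^{1-1/q}\big(\int_1^\infty t^{-(q-1)}dt\big)^{1/q}$ converges since $q-1>1$; once more every resulting power of $\bar\rho$ comes out negative, so $\int_1^\infty\|\nabla u\|_{L^\infty}\,dt\le C\bar\rho^{-\gamma}$. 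Choosing $\bar\rho\ge\Lambda_2$ large then forces $\int_0^\infty\|\nabla u\|_{L^\infty}\,dt\le\ln 2$ and the lemma follows.

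The main obstacle is exactly this uniform-in-$T$ control of $\|\nabla u\|_{L^1_tL^\infty_x}$. Because the director is only transported, the supercritical coupling $\dv(\nabla d\odot\nabla d)$ entering the Stokes estimate must be reabsorbed through the decay of $\|\nabla^2 d\|_{L^2}$ and $\|\nabla^3 d\|_{L^2}$ established in Lemmas~\ref{3d} and~\ref{L_2} together with the smallness of the critical norm $\|\nabla d\|_{L^\infty_tL^3_x}\le\sqrt{2\delta}$; moreover the Gagliardo--Nirenberg exponents one is forced to use ($\theta<1$, $2-2/q<2$) sit precisely on the edge of time-integrability, so that only the time-independent weighted estimates \eqref{trut}--\eqref{ttrut}, together with the strict inequality $\alpha>1$ which turns every $\bar\rho$-prefactor into a negative power, allow the argument to close.
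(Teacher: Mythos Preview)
Your proposal is correct and essentially identical to the paper's proof: reduce via Gronwall to $\int_0^T\|\nabla u\|_{L^\infty}\,dt$, invoke \eqref{W2q}, interpolate $\|\rho u_t\|_{L^q}$ via Gagliardo--Nirenberg, split the time interval at $t=1$, and use the weighted bounds \eqref{trut}--\eqref{ttrut} together with \eqref{d2d}--\eqref{tdd} to obtain a negative power of $\bar\rho$. One minor slip: in your short-time bound $\int_0^1\|\nabla u\|_{L^\infty}\,dt\le C\bar\rho^{-\gamma}+C\|\nabla^2 d_0\|_{L^2}^{\gamma'}$ the second term must also carry the factor $\bar\rho^{-\alpha}$ inherited from \eqref{W2q} (consistent with your own remark that every prefactor is negative), so the whole short-time integral is in fact $\le C(\|\nabla^2 d_0\|_{L^2})\,\bar\rho^{-\gamma}$ and the argument closes.
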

\begin{proof}
	It follows from \eqref{ins}$_1$  that  
	\begin{equation}
		\nabla \rho_t + u \cdot \nabla^2 \rho + \nabla u \cdot \nabla \rho =0.
	\end{equation}
	Multiplying the above equation by $|\nabla \rho|^{q-2}\nabla \rho$ and then integrating by parts, we have 
	\begin{equation}\label{dr}
		\begin{aligned}
			\frac{1}{p}\frac{d}{dt} \|\nabla \rho\|_{L^q}^q  = & - \int \nabla \rho \cdot \nabla u \cdot \nabla \rho |\nabla \rho|^{q-2} dx  \\
			\le & C \|\nabla u\|_{L^\infty} \|\nabla \rho \|_{L^q}^q .
		\end{aligned}
	\end{equation}
It follows from \eqref{W2q} and Gagliardo-Nirenber inequality that

	\begin{equation}\label{kkk}
		\begin{aligned}
			\int_0^T \|\nabla u\|_{L^\infty} dt
   \le & \int_0^T \|\nabla u\|_{W^{1,q}} dt\\
   \le & C \bar\rho^{-\alpha} \int_0^T  \|\rho u_t\|_{L^q}   dt + \bar \rho^{(1-\alpha)\frac{5q-6}{q}} \int_0^T  \|\nabla u\|_{L^2}^{\frac{6(q-1)}{q}} dt \\
   &+ C \bar\rho^{-\alpha}\int_0^T  \|\nabla d\|_{L^3}^{\frac{2}{q}}\|\nabla^3 d\|_{L^2}^{2-\frac{2}{q}}dt.
		\end{aligned}
	\end{equation}
For the first term on the right-hand side of the above inequality, after using the Gagliardo-Nirenberg inequality and \eqref{trut} and \eqref{ttrut}, we have 
    \begin{equation}\label{qdecay}
        \begin{aligned}
            &\int_{0}^{T}  \|\rho u_t\|_{L^q} dt \\
            \le & \int_{0}^{T}  \bar\rho^{\frac{5q-6}{4q}}\|\sqrt{\rho}u_t\|_{L^2}^{\frac{6-q}{2q}} \|\nabla u_t\|_{L^2}^{\frac{3(q-2)}{2q}} dt\\
            \le & C \bar\rho^{\frac{5q-6}{4q}} \left(\sup_{0\le t\le \min\{1,T\}} t \|\sqrt{\rho}u_t\|_{L^2}^2 dt\right)^{\frac{6-q}{4q}}\\
            &\cdot \left( \int_{0}^{\min\{1,T\}} t \|\nabla u_t\|_{L^2}^2 dt \right)^{\frac{3(q-2)}{4q}}\left( \int_{0}^{\min\{1,T\}} t^{-\frac{2q}{q+6}}  dt \right)^{\frac{q+6}{4q}} \\
            &+ C \bar\rho^{\frac{5q-6}{4q}} \left(\sup_{\min\{1,T\}\le t\le T} t^2 \|\sqrt{\rho}u_t\|_{L^2}^2 dt\right)^{\frac{6-q}{4q}} \\
            &\cdot\left( \int_{\min\{1,T\}}^{T} t^2 \|\nabla u_t\|_{L^2}^2 dt \right)^{\frac{3(q-2)}{4q}} \left( \int_{\min\{1,T\}}^{T} t^{-\frac{4q}{q+6}}  dt \right)^{\frac{q+6}{4q}} \\
            \le & C \bar\rho^{\frac{5q-6}{4q} + \alpha \frac{6-q}{4q} } + C \bar\rho^{\frac{5q-6}{4q} + \frac{6-q}{4q} + (1-\alpha) \frac{3(q-2)}{4q}} \\
            \le & C \bar\rho^{\frac{5q-6}{4q} + \alpha \frac{6-q}{4q}}.
        \end{aligned}
    \end{equation}
The last term
\begin{equation}\label{ddecay}
\begin{aligned}
	    &\bar\rho^{-\alpha}\int_0^T  \|\nabla d\|_{L^3}^{\frac{2}{q}}\|\nabla^3 d\|_{L^2}^{2-\frac{2}{q}}dt
        \leq C\delta^\frac{2}{q}\bar\rho^{-\alpha}\int_0^T  \|\nabla^3 d\|_{L^2}^{2-\frac{2}{q}}dt\\
        \le & \bar\rho^{-\alpha}
        \left(\int_0^{\min\{1,T\}} \|\nabla^3 d\|_{L^2}^2dt\right)^{1-\frac{1}{q}}\\
        &+ \bar\rho^{-\alpha}
        \left(\int_{\min\{1,T\}} ^Tt\|\nabla^3 d\|_{L^2}^2dt\right)^{1-\frac{1}{q}}\left(\int_{\min\{1,T\}} ^T t^{1-q}dt\right)^\frac{1}{q}\\
        \leq & C \bar\rho^{-\alpha}.
\end{aligned}
\end{equation}
\eqref{ddecay} together with \eqref{qdecay}, \eqref{kkk}, yields that after using \eqref{basic-est},
    \begin{equation}
		\begin{aligned}
		\int_0^T \|\nabla u\|_{L^\infty} dt 
            \le & C \bar\rho^{\frac{5q-6}{4q} + \alpha \frac{6-q}{4q} -\alpha } +  C \bar \rho^{(1-\alpha)\frac{5q-6}{q} + (1-\alpha)}
            +C\delta^\frac{2}{q}\bar\rho^{-\alpha}\\
            \le & C \bar\rho^{\frac{5q-6}{4q}(1 -\alpha) } +  C \bar \rho^{(1-\alpha)\frac{5q-6}{q} + (1-\alpha)}
            +C\bar\rho^{-\alpha}\\
            \leq & C \bar\rho^{-D},
		\end{aligned}
	\end{equation}
 where
 \begin{equation*}
     D=\max{\left\{\frac{5q-6}{4q}(\alpha-1), \frac{6(q-1)}{q}(\alpha-1),-\alpha\right\}}.
 \end{equation*}
    Finally, note
    \begin{equation}
        \mathcal{E}_\rho(0)= \|\nabla \rho_0\|_{L^q},
    \end{equation}
    then Gronwall’s inequality together with \eqref{dr} yields
    \begin{equation}
        \sup_{0\le t\le T} \|\nabla \rho\|_{L^q}  
        \le \exp{\left\{C_2\bar\rho^{-D}\right\}}\|\nabla \rho_0\|_{L^q}\leq 2\mathcal{E}_\rho(0),
    \end{equation}
    provided 
    \begin{equation}
        \bar{\rho}\geq \biggl(\frac{C_2}{\log 2}\biggr)^\frac{1}{D}\triangleq \Lambda_2.
    \end{equation}
\end{proof}
\textbf{Proof of Prosition \ref{pr}}
    Proposition \ref{pr} is a direct consequence of Lemmas \ref{3d}, \ref{L_2} and \ref{L_1} after choosing
    \begin{equation}\label{delta}
        \delta < \min\left\{\frac{1}{4c_1}, \frac{1}{4c_3}, \frac{1}{4c_8}, \frac{1}{4c_9}\right\},
    \end{equation}
    with constants $c_1,c_3,c_8,c_9$ depending only on the Sobolev constants and elliptic constants,
    and 
    $$\Lambda_0=\max\left\{\Lambda_1, \Lambda_2, \frac{1}{(4c_{10})^{\alpha}} \right\}.$$
    
\section{Proof of Theorem \ref{global}}
According to Theorem \ref{local}, there exists a $\Tilde{T}>0$ such that the inhomogeneous incompressible simplified Ericksen-Leslie system \eqref{ins}-\eqref{bc} has a unique local strong solution $(\rho, u, P, d)$ on $[0, \Tilde{T}]$. We use the a priori estimates, Proposition \ref{pr} to extend the local strong solution to all time.

Due to 
\begin{equation}
    \|\nabla \rho_0\|_{L^q}=\mathcal{E}_\rho(0)<3\mathcal{E}_\rho(0), \ 
    \|\nabla u_0\|_{L^2}^2=M<3M,\
    \|\nabla d_0\|_{L^3}=\mathcal{E}_d(0)<2\delta,
\end{equation}
and the local regularity results \eqref{l-r}, there exists a $T_1\in(0, \Tilde{T})$ such that
\begin{equation}
    \sup_{0\le t\le T_1} \|\nabla \rho\|_{L^q}\leq 3\mathcal{E}_\rho(0),\ 
    \sup_{0\le t\le T_1} \|\nabla u_0\|_{L^2}^2 \leq 3M,\
    \sup_{0\le t\le T_1} \|\nabla d_0\|_{L^3} \leq 2\delta.
\end{equation}
Set
\begin{equation}
    T^\ast=\sup\{T| (\rho, u, P, d)\ \mathrm{is}\ \mathrm{a}\ \mathrm{strong}\ \mathrm{solution}\ \mathrm{to}\ \eqref{ins}-\eqref{bc}\ \mathrm{on}\ [0,T]\},
\end{equation}
\begin{equation}
T_1^\ast=\sup\left\{T\bigg| 
\begin{array}{l}
(\rho, u, P, d)\ \mathrm{is}\ \mathrm{a}\ \mathrm{strong}\ \mathrm{solution}\ \mathrm{to}\ \eqref{ins}-\eqref{bc}\ \mathrm{on}\ [0,T],\\
\sup_{0\le t\le T_1} \|\nabla \rho\|_{L^q}\leq 3\mathcal{E}_\rho(0),\ 
\sup_{0\le t\le T_1} \|\nabla u_0\|_{L^2}^2 \leq 3M,\\
\sup_{0\le t\le T_1} \|\nabla d_0\|_{L^3} \leq 2\delta.
\end{array} \right\}.
\end{equation}
Then $T^\ast_1\geq T_1>0$. Recalling Proposition \ref{pr}, it’s easy to verify
\begin{equation}
    T^\ast=T^\ast_1.
\end{equation}
provided that $\bar\rho>\Lambda_0$ and $\|\nabla d_0\|_{L^3}\leq  \varepsilon_0$ as assumed. 

We claim that $T^\ast=\infty$. Otherwise, assume that
$T^\ast<\infty$. By virtue of Proposition \ref{pr}, for every $t\in[0, T^\ast)$, it holds that
\begin{equation}
     \|\nabla \rho\|_{L^q}\leq 2\mathcal{E}_\rho(0),\ 
     \|\nabla u_0\|_{L^2}^2 \leq 2M,\
     \|\nabla d_0\|_{L^3} \leq \delta,
\end{equation}
therefore we can extend the solution to $T^{\ast\ast}>T^\ast$ due to Lemma \ref{local}, which contradicts with the defination of $T^\ast$. Hence we finish the proof of Theorem \ref{global}.

\section*{Conflict-of-interest statement}
All authors declare that they have no conflicts of interest.

\section*{Data Availability}
No data were used for the research described in the article.

\section*{Acknowledgments}
Y. Mei is supported by the National Natural Science Foundation of China No. 12101496 and 12371227.	
J.-X. Li was supported in part by Zheng Ge Ru Foundation, 
%Hong Kong RGC Earmarked Research Grants CUHK-14301421, CUHK-14300819, CUHK-14302819, CUHK-14300917, the key project of NSFC (Grant No. 12131010)  
the Shun Hing Education and Charity Fund. 
R. Zhang is supported by the National Natural Science Foundation of China No. 12401279. 
Part of this work was done when Y. Mei and R. Zhang were visiting the Institute of Mathematical Sciences at the Chinese University of Hong Kong. They would like to thank the institute for its hospitality.

\normalem
\bibliographystyle{siam}
\bibliography{ref}

\end{document}